\title{Stacked Pseudo-Convergent Sequences and Polynomial Dedekind Domains}
\date{\today}
\author{Giulio Peruginelli\footnote{Department of Mathematics, University of Padova, Via Trieste, 63, 35121 Padova, Italy.\newline E-mail: gperugin@math.unipd.it}
}
\numberwithin{equation}{section}
\newtheorem{Prop}[equation]{Proposition}
\newtheorem{Thm}[equation]{Theorem}
\newtheorem{Lem}[equation]{Lemma}
\newtheorem{Cor}[equation]{Corollary}
\theoremstyle{definition}\newtheorem{Def}[equation]{Definition}
\newtheorem{Rem}[equation]{Remark}
\newtheorem{Rems}[equation]{Remarks}
\theoremstyle{definition}
\newcommand{\F}{\mathbb{F}}
\newcommand{\Q}{\mathbb{Q}}
\newcommand{\N}{\mathbb{N}}
\newcommand{\Z}{\mathbb{Z}}
\newcommand{\R}{\mathbb{R}}
\renewcommand{\O}{\mathbb{O}}
\newcommand{\C}{\mathbb{C}}
\newcommand{\PP}{\mathbb{P}}
\newcommand{\Int}{\textnormal{Int}}
\newcommand{\IntQ}{\Int_{\Q}}
\newcommand{\uE}{\underline{E}}
\newcommand{\oQp}{\overline{\Q_p}}
\newcommand{\Br}{\text{Br}}
\newcommand{\Gal}{\text{Gal}}
\newcommand{\K}{\widehat{K}}
\newcommand{\V}{\widehat{V}}
\newcommand{\wW}{\widetilde{W}}
\newcommand{\oZp}{\overline{\Z_p}}
\newcommand{\hZ}{\widehat{\mathbb{Z}}}
\newcommand{\ohZ}{\overline{\hZ}}
\newcommand{\Cl}{\text{Cl}}
\begin{document}

\maketitle

\begin{abstract}
Let $p\in\Z$ be a prime, $\oQp$ a fixed algebraic closure of the field of $p$-adic numbers and $\oZp$ the absolute integral closure of the ring of $p$-adic integers. Given a residually algebraic torsion extension $W$ of $\Z_{(p)}$ to $\Q(X)$, by Kaplansky's characterization of immediate extensions of valued fields, there exists a pseudo-convergent sequence of transcendental type $E=\{s_n\}_{n\in\N}\subset\oQp$  such that $W=\Z_{(p),E}=\{\phi\in\Q(X)\mid\phi(s_n)\in\oZp,\text{ for all sufficiently large }n\in\N\}$. We show here that we may assume that $E$ is stacked, in the sense that, for each $n\in\N$, the residue field (the value group, respectively) of  $\oZp\cap\Q_p(s_n)$ is contained in the residue field (the value group, respectively) of $\oZp\cap\Q_p(s_{n+1})$; this property of $E$ allows us to describe the residue field and value group of $W$. 
In particular, if $W$ is a DVR, then there exists $\alpha$ in the completion $\C_p$ of $\oQp$,  $\alpha$ transcendental  over $\Q$, such that $W=\Z_{(p),\alpha}=\{\phi\in\Q(X)\mid\phi(\alpha)\in \O_p\}$, where $\O_p$ is the unique local ring of $\C_p$; $\alpha$ belongs to $\oQp$ if and only if the residue field extension $W/M\supseteq\Z/p\Z$ is finite. As an application, we provide a full  characterization of the Dedekind domains between $\Z[X]$ and $\Q[X]$.

\noindent Keywords: pseudo-convergent sequence, residually algebraic extension, distinguished pair, minimal pair, Dedekind domain.\\

\noindent MSC Primary 12J20, 13F30, 13A18, 13F05, 13B25, 13F20.

\end{abstract}




\subsection*{Introduction}

The problem of characterizing the set of the extensions of a valuation domain $V$ with quotient field $K$ to the field of rational functions $K(X)$ has a long and rich tradition (for example, see \cite{AlePop, APZTheorem, APZMinimal,APZAll, Kap,MatOhm,PerTransc, PS1,PS2}). One of the recent direction of research is to describe these extensions by means of pseudo-monotone sequences of $K$ (\cite{PS2}), in the original spirit of Ostrowski (see \cite{Ostr}) who introduced the well-known notion of pseudo-convergent sequence, later on expanded by Kaplansky in \cite{Kap} to study immediate extensions of valued fields. 

Here, given a prime $p\in\Z$ and the DVR $\Z_{(p)}$ of $\Q$, we are interested in describing residually algebraic torsion extensions of $\Z_{(p)}$ to $\Q(X)$, that is, valuation domains $W$ of $\Q(X)$ lying above $\Z_{(p)}$ such that the residue field extension $W/M\supseteq\Z/p\Z$ is algebraic and the value group $\Gamma_w$ of the associated valuation $w$ to $W$ is contained in the divisible hull of the value group of $\Z_{(p)}$ (i.e., the rationals). These valuation domains arise naturally as overrings of rings of integer-valued polynomials and Dedekind domains between $\Z[X]$ and $\Q[X]$ (see \cite{EakHei, PerDedekind}) and also in the description of closed subfields of $\C_p$ (\cite{IovZah}), the completion of an algebraic closure $\oQp$ of the field of $p$-adic numbers $\Q_p$. In the case $W$ is a DVR and the residue field extension is finite, then by \cite[Theorem 2.5 \& Proposition 2.2]{PerTransc} there exists an element $\alpha$ in $\oQp$, transcendental over $\Q$, such that $W=\Z_{(p),\alpha}=\{\phi\in \Q(X)\mid\phi(\alpha)\in\oZp\}$, where $\oZp$ is the absolute integral closure of $\Z_p$ (i.e., the integral closure of $\Z_p$ in $\oQp$; note that $\oZp$ is the valuation domain of the unique extension of $v_p$ to $\oQp$).  
In general, given a residually algebraic torsion extension $W$ of $\Z_{(p)}$ to $\Q(X)$, there exists a pseudo-convergent sequence $E=\{s_n\}_{n\in\N}$ in $\oQp$ such that $W=\Z_{(p),E}=\{\phi\in\Q(X)\mid\phi(s_n)\in\oZp,\text{ for all sufficiently large }n\in\N\}$ (Proposition \ref{ratQ(X)}). One of the main result of this paper is to show that we may assume that $E$ is stacked (in a sense we make clear in \S \ref{stacked sect}; see Theorem \ref{Epcvtranscdistinguished}).  In particular, if $W$ is a DVR of $\Q(X)$ extending $\Z_{(p)}$ such that the extension of the residue fields is infinite algebraic, then there exists $\alpha$ in $\C_p\setminus \oQp$, such that  $W=\Z_{(p),\alpha}=\{\phi\in \Q(X)\mid\phi(\alpha)\in \O_p\}$, where $\O_p$ is the completion of $\oZp$ (equivalently, $\O_p$ is the valuation domain of the unique extension of $v_p$ to $\C_p$; see  Corollary \ref{DVRQ(X)rat}). Necessarily, the (transcendental) extension $\Q_p(\alpha)/\Q_p$ has finite ramification. 

It is worth recalling that in \cite[\S 5, 1., \& Theorem 5.1]{APZAll} a residually algebraic torsion extension $W$ of $\Z_{(p)}$ to $\Q(X)$ is realized as the limit of a sequence of residually transcendental extensions $W_n$ of $\Z_{(p)}$ to $\Q(X)$ (i.e., the residue field extension of $W_n$ over $\Z_{(p)}$ is transcendental); moreover, for each $n\in\N$, $W_n$ is defined by a minimal pair $(s_n,\delta_n)$ (as explained in \cite[p. 282]{APZAll}; for the definition of minimal pair see \S \ref{dist and min pairs}). Here, $W$ is realized as the valuation domain $\Z_{(p),E}$, where for each $n\in\N$, $(s_n,\delta_n=v_p(s_{n+1}-s_n))$ is a minimal pair, too.

The motivations behind these results are based on the paper \cite{APZclosed}, in which the authors study closed subfields of $\C_p$ and show that any transcendental element of $\C_p$ is the limit of a particular kind of Cauchy sequence in $\oQp$ called distinguished (\cite[Proposition 2.2]{APZclosed}) which allows them to associate to such an element a set of invariants (\cite[Remark 2.4]{APZclosed}). The notion of stacked sequence we introduce in this paper is a generalization of the notion of distinguished sequence and falls into the well-known class of pseudo-convergent sequences. It allows us to describe the whole class of residually algebraic torsion extensions of $\Z_{(p)}$ to $\Q(X)$, which strictly comprise the valuation domains $\Z_{(p),\alpha}$ arising from elements $\alpha\in\C_p\setminus\oQp$.

As an application of the above results, we are able to complete the classification of the family of Dedekind domains $R$ between $\Z[X]$ and $\Q[X]$ started in \cite{PerDedekind}. In that paper we described the Dedekind domains of this family whose residue fields of prime characteristic are finite fields (\cite[Theorem 2.17]{PerDedekind}); the description is obtained by means of the notion of rings of integer-valued polynomials over algebras. We also showed that, given a group $G$ which is the direct sum of a countable family of finitely generated abelian groups, there exists a Dedekind domain $R$ with finite residue fields of prime characteristic, $\Z[X]\subset R\subseteq\Q[X]$, with class group $G$ (\cite[Theorem 3.1]{PerDedekind}).  


The paper is organized as follows. In Section \ref{prelim} we recall the relevant notions we need in our paper: first, we review the definition of pseudo-convergent sequence of a valued field $K$ and the valuation domain of $K(X)$ associated to such a sequence in the spirit of Ostrowski (\cite{Ostr}), as developed recently in \cite{PS1,PS2}. Then, we recall the notion of distinguished pair introduced in \cite{PZstructure} which later on was used in \cite{APZclosed} to describe closed subfield of $\C_p$ in terms of a specific kind of pseudo-convergent Cauchy sequence called distinguished.

In Section \ref{stacked sect}, we introduce the notion of stacked sequence $E=\{s_n\}_{n\in\N}$ in $\oQp$, which turns out to be a pseudo-convergent sequence  of transcendental type such that, for each $n\in\N$, the value group (the residue field, respectively) of $\oZp\cap\Q_p(s_n)$ is contained in the value group (the residue field, respectively) of $\oZp\cap\Q_p(s_{n+1})$. By Theorem \ref{Epcvtranscdistinguished}, every residually algebraic extension $W$ of $\Z_p$ to $\Q_p(X)$ can be realized by means of a stacked sequence $E\subset\oQp$, that is, $W=\Z_{p,E}=\{\phi\in\Q_p(X)\mid\phi(s_n)\in\oZp,\text{ for all sufficiently large }n\in\N\}$. Moreover, the above specific property of stacked sequences is crucial for the description of the residue field and value group of $W$ as the union of the ascending chain of residue fields and value groups of $\oZp\cap\Q_p(s_n)$, respectively  (Proposition \ref{residuevaluerat}).  We mentioned above that the elements $\alpha\in\C_p\setminus\oQp$ such that the extension $\Q_p(\alpha)/\Q_p$ has finite ramification give raise to DVRs of $\Q(X)$; we characterize such elements as the limits of sequences contained in the maximal unramified extension of a finite extension of $\Q_p$ (Proposition \ref{elements bounded ramification}). We close this section by pointing out a wrong statement in the paper \cite{IovZah}, namely, that the completion of $\Q_p(X)$ with respect to a residually algebraic torsion extension $W$ of $\Z_p$ is a subfield of $\C_p$; this is not true in general and  it depends on whether the above sequence $E$ is Cauchy or not. In \S \ref{RatZpQX}, we use the result of \S \ref{stackedseq} about residually algebraic torsion extensions of $\Z_p$ to $\Q_p(X)$ in order to characterize the analogous extensions of $\Z_{(p)}$ to $\Q(X)$ (Proposition \ref{ratQ(X)}). In Theorem \ref{prescribed residue field and value group rat Q(X)}, we show that, for any prescribed algebraic extension $k$ of $\F_p$ and value group $\Gamma$, $\Z\subseteq\Gamma\subseteq\Q$, there exists  $\alpha\in\C_p$, transcendental over $\Q$, such that $\Z_{(p),\alpha}$ has residue field $k$ and value group $\Gamma$.

Finally, in Section \ref{PolDed} we provide the aforementioned classification of the Dedekind domains between $\Z[X]$ and $\Q[X]$ by means of the notion of ring of integer-valued polynomials over an algebra: given such a domain $R$, we show that for each prime  $p\in\Z$ there exists a finite set $E_p\subset\C_p$  of transcendental elements over $\Q$ such that $R=\{f\in\Q[X] \mid f(E_p)\subseteq \O_p,\forall p\in\PP\}$ (Theorem \ref{PolDedekind}).

\section{Preliminaries}\label{prelim}

We refer to \cite{Bourb, EngPre, RibVal, ZS2} for generalities about valuation theory. A valuation domain $W$ of the field of rational functions $K(X)$ is an extension of  a valuation domain $V$ of $K$ if $W\cap K=V$. We denote by $w$ a valuation  associated to $W$, by $\Gamma_w$ the value group of $w$ and by $k_w$ the residue field of $W$. We recall that an extension $W$ of $V$ to $K(X)$ is called residually algebraic  if the residue field extension is algebraic and it is called torsion if $\Gamma_w$ is contained in the divisible hull of the value group $\Gamma_v$ of $V$ (see \cite{APZAll}). Given a valuation domain $W$ with quotient field $F$, a subfield $K$ of $F$ and the valuation domain $V=W\cap K$, we say that $W$ is an immediate extension of $V$ (or simply immediate over $V$) if the value groups (the residue fields, respectively) of $V$ and $W$ are the same. Given a field $K$ with a valuation domain $V$, we denote by $\widehat{K}$ ($\widehat V$, respectively) the completion of $K$ ($V$, respectively) with respect to $V$-adic topology.

\subsection{Pseudo-convergent sequences}\label{pcv}

The following basic material about pseudo-convergent sequences can be found for example in \cite{Kap, PS1,PS2}.

Given a valued field $(K,v)$, a  sequence $E=\{s_n\}_{n\in\N}\subset K$ is said to be \emph{pseudo-convergent} if, for all $n<m<k$ we have
$$v(s_n-s_m)<v(s_m-s_k).$$
In particular, for all $n$ and $m>n$ we have $v(s_n-s_m)=v(s_n-s_{n+1})$. For each $n\in\N$, we set $\delta_n=v(s_n-s_{n+1})$. The strictly increasing sequence $\{\delta_n\}_{n\in\N}$ of the value group $\Gamma_v$ of $v$ is called the \emph{gauge} of $E$. The sequence $E$ is a classical Cauchy sequence in $K$ if and only if the gauge of $E$ is cofinal in $\Gamma_v$. In this case, $E$ converges to a unique limit $\alpha\in\K$. In general, if $E=\{s_n\}_{n\in\N}\subset K$ is a pseudo-convergent sequence, we say that an element $\alpha\in K$ is a \emph{pseudo-limit} of $E$ if $v(s_n-\alpha)$ is a strictly increasing sequence. Equivalently, $v(s_n-\alpha)=\delta_n$ for each $n\in\N$. The set of pseudo-limits $\mathcal{L}_E$ in $K$ of a pseudo-convergent sequence $E$ is equal to $\mathcal{L}_E=\alpha+\Br(E)$ (\cite[Lemma 3]{Kap}), where $\Br(E)=\{x\in K\mid v(x)>\delta_n,\forall n\in\N\}$ is a fractional ideal, called the \emph{breadth ideal} of $E$. Clearly, $E$ is a Cauchy sequence if and only if $\Br(E)=\{0\}$.

As in \cite[Definitions, p. 306]{Kap}, a pseudo-convergent sequence $E=\{s_n\}_{n\in\N}\subset K$ is of \emph{transcendental type} if, for all $f\in K[X]$, $v(f(s_n))$ is eventually constant. Otherwise, $E$ is said to be of \emph{algebraic type} if $v(f(s_n))$ is eventually strictly increasing for some $f\in K[X]$. The sequence $E$ is of algebraic type if and only if, for some extension $u$ of $v$ to the algebraic closure $\overline{K}$ of $K$, there exists $\alpha\in\overline{K}$ which is a pseudo-limit of $E$ with respect to $u$. If $F$ is a subfield of $K$, then we say that $E$ is of \emph{transcendental type over} $F$ if, for all $f\in F[X]$, $v(f(s_n))$ is eventually constant. Almost all the pseudo-convergent sequences considered in this paper in order to describe residually algebraic torsion extensions to the field of rational functions are of transcendental type.

Given a pseudo-convergent sequence $E=\{s_n\}_{n\in\N}\subset K$, the following is a valuation domain of $K(X)$ extending $V$ associated to $E$ (\cite[Theorem 3.8]{PS1}):
$$V_E=\{\phi\in K(X)\mid \phi(s_n)\in V,\text{ for all sufficiently large }n\in\N\}.$$
Moreover, by the same Theorem, $X$ is a pseudo-limit of $E$  with respect to the valuation $v_E$ associated to $V_E$, so, in particular, $v_E(X-s_n)=\delta_n$, for every $n\in\N$. Also, if $E$ is of transcendental type, then for all $f\in K[X]$, we have  $v_E(f)=v(f(s_n))$ for all $n$ sufficiently large (\cite[Theorem 2]{Kap} or \cite[Theorem 4.9, a)]{PS1}).

In case $E$ is a Cauchy sequence converging to $\alpha\in\K$, then 
$$V_E=V_{\alpha}=\{\phi\in K(X)\mid \phi(\alpha)\in \V\}$$
(see \cite[Remark 3.10]{PS1}).

Given two pseudo-convergent sequences $E=\{s_n\}_{n\in\N},E'=\{s_n'\}_{n\in\N}\subset K$, we say that $E,E'$ are equivalent if $\Br(E)=\Br(E')$ and for each $k\in\N$, there exist $i_0,j_0\in\N$ such that $v(s_i-s_j')>v(s_{k+1}-s_k)$ for each $i\geq i_0$ and $j\geq j_0$  (see   \cite[\S 5]{PS1}). By \cite[Proposition 5.3]{PS2}, $E,E'$ are equivalent if and only if $V_E=V_{E'}$.

\subsection{Distinguished pairs}\label{dist and min pairs}
We suppose in this section that $(K,v)$ is a complete valued field where $v$ is a rank one discrete valuation (so, in particular, $(K,v)$ is Henselian). Let $\overline{K}$ be a fixed algebraic closure and let $v$ denote the unique extension of $v$ to $\overline{K}$. Let also $\Gamma_{\overline{v}}=\Gamma_v\otimes\Q$ be the divisible hull of $\Gamma_v$. Given an element $a\in\overline{K}$, let $O_a, k_a,\Gamma_a$ be the valuation domain of the restriction of $v$ to $K(a)$, the residue field of $O_a$ and the value group of $O_a$,   respectively.

As in \cite{KhaSa}, given $a\in\overline{K}\setminus K$, we set:
\begin{align*}
\delta_K(a)=&\sup\{v(a-c)\mid c\in\overline{K},[K(c):K]<[K(a):K]\}\\
\omega_K(a)=&\sup\{v(a-a')\mid a'\not=a\text{ runs over the }K\text{-conjugates of }a\}
\end{align*}
The following is the well-known Krasner's lemma. Essentially, given a separable element $a\in\overline{K}$, if another element $b\in\overline{K}$ is closer to $a$ than to any other of its conjugates, then $K(a)$ is a subfield of $K(b)$.
\begin{Lem}[Krasner]
If $a\in\overline{K}^{\text{sep}}$ and $b\in\overline{K}$ is such that $v(a-b)>\omega_K(a)$, then $K(a)\subseteq K(b)$.
\end{Lem}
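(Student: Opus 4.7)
The plan is to exploit the Henselian property of $(K,v)$ (which holds since $K$ is complete) to show that every element of $\text{Gal}(L/K(b))$ fixes $a$, where $L$ is any finite Galois extension of $K$ containing both $a$ and $b$. Since $a$ is separable over $K$, hence over $K(b)$, this forces $a\in K(b)$.

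More concretely, I would first invoke the uniqueness of the extension of $v$ to $\overline{K}$ (a consequence of $K$ being Henselian). This means that for any $\sigma\in\text{Gal}(\overline{K}/K)$ one has $v\circ\sigma=v$ on $\overline{K}$. Now fix a finite Galois extension $L/K$ with $a,b\in L$ and pick an arbitrary $\sigma\in\text{Gal}(L/K(b))$. Writing $\sigma(a)-a=(\sigma(a)-b)-(a-b)$ and using $\sigma(b)=b$ together with the fact that $\sigma$ preserves $v$, I compute
$$v(\sigma(a)-b)=v(\sigma(a)-\sigma(b))=v(a-b).$$
Hence $v(\sigma(a)-a)\geq\min\{v(\sigma(a)-b),v(a-b)\}=v(a-b)>\omega_K(a)$.

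Next, the key observation is that $\sigma(a)$ is a $K$-conjugate of $a$. If $\sigma(a)\neq a$, the very definition of $\omega_K(a)$ yields $v(\sigma(a)-a)\leq\omega_K(a)$, contradicting the inequality obtained above. Therefore $\sigma(a)=a$ for every $\sigma\in\text{Gal}(L/K(b))$. Since $a$ is separable over $K$, it is also separable over $K(b)$, so the fixed field of $\text{Gal}(L/K(b))$ is $K(b)$ itself, which gives $a\in K(b)$ and thus $K(a)\subseteq K(b)$, as required.

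There is really no genuine obstacle here: the only subtle point is ensuring that all relevant $K$-automorphisms preserve the valuation, which is granted for free by the Henselian hypothesis built into the setting of Section \ref{dist and min pairs}. The separability hypothesis is used exactly once, at the very last step, to pass from $\text{Gal}(L/K(b))$-invariance to membership in $K(b)$.
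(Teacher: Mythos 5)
The paper does not actually prove this lemma --- it is quoted as the classical Krasner lemma --- so there is no internal proof to compare against; what you give is the standard Galois-theoretic argument, and its core is correct: completeness of the discretely valued $(K,v)$ gives Henselianity, hence the extension of $v$ to $\overline{K}$ is unique and $v\circ\sigma=v$ for every $K$-automorphism, so the ultrametric computation $v(\sigma(a)-a)\geq\min\{v(\sigma(a)-b),v(a-b)\}=v(a-b)>\omega_K(a)$ forces $\sigma(a)=a$, and separability of $a$ then puts $a$ in $K(b)$. One point needs attention in the generality in which the lemma is stated: $b\in\overline{K}$ is allowed to be inseparable over $K$ (this is exactly why the statement distinguishes $a\in\overline{K}^{\text{sep}}$ from $b\in\overline{K}$), and in that case there is no finite Galois extension $L/K$ containing $b$, so both your choice of $L$ and the identification of the fixed field of $\Gal(L/K(b))$ with $K(b)$ are unavailable as written. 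The repair is to change the base field: $a$ is separable over $K(b)$, so take $L$ to be the Galois closure of $K(b)(a)$ over $K(b)$ (equivalently, argue with the $K(b)$-embeddings of $K(b)(a)$ into $\overline{K}$); any such $\sigma$ sends $a$ to a root of its minimal polynomial over $K$, hence to a $K$-conjugate, it still preserves $v$ because $v\circ\sigma$ and $v$ both extend $v|_K$ and the extension to $\overline{K}$ is unique, and the rest of your computation goes through verbatim. In the paper's application $K$ is a finite extension of $\Q_p$, so characteristic zero makes this caveat vacuous, but for the lemma as stated it should be addressed.
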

In particular, for every $a\in\overline{K}^{\text{sep}}$ we have $\delta_K(a)\leq \omega_K(a)$. Moreover, it follows also that $\delta_K(a)$ is a maximum, since $v$ is supposed to be discrete. This is known (see for example \cite[p. 105]{PZstructure}), but for the sake of the reader we give a short proof.

\begin{Lem}\label{deltaK maximum}
In the above setting, $\delta_K(a)=\max\{v(a-c)\mid c\in\overline{K},[K(c):K]<[K(a):K]\}$.
\end{Lem}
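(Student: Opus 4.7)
The plan is to combine Krasner's lemma with the discreteness of $v$. First I would invoke the remark preceding the statement: since $a \in \overline{K}^{\text{sep}}$, Krasner's lemma already yields $\delta_K(a) \leq \omega_K(a) < \infty$, so the supremum in question is finite. It therefore suffices to show that the set
\[
S := \{v(a-c) \mid c \in \overline{K},\ [K(c):K] < n\},
\]
where $n := [K(a):K]$, actually admits a maximum rather than merely a supremum.

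The key step is to show that the elements of $S$ lie in a discrete subgroup of $\Q$ whose denominators are bounded in terms of $n$ alone. Indeed, for any $c \in \overline{K}$ with $d := [K(c):K] < n$, the compositum $K(a,c)$ has degree $[K(a,c):K] \leq nd < n^2$ over $K$, so the ramification index $e(K(a,c)/K)$ of the (unique) extension of $v$ to $K(a,c)$ is bounded by $n^2$. Since $v$ is rank one discrete on $K$ with value group (identified with) $\Z$, this extension has value group $\tfrac{1}{e(K(a,c)/K)}\Z$, and in particular $v(a-c) \in \tfrac{1}{N}\Z$ for the single integer $N := \mathrm{lcm}(1,2,\ldots,n^2)$.

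Thus $S$ is a subset of the discrete group $\tfrac{1}{N}\Z \subset \R$ that is bounded above by $\omega_K(a)$. Any bounded subset of a discrete subgroup of $\R$ attains its supremum, which by definition equals $\delta_K(a)$. The only delicate point, and where I expect the crux of the argument to sit, is the uniform control on the denominators of the values $v(a-c)$: this in turn rests entirely on controlling $[K(a,c):K]$ via the constraint $[K(c):K] < n$. Once this uniformity is in hand, no further appeal to Krasner's lemma is needed beyond the initial finiteness bound $\delta_K(a) \leq \omega_K(a)$.
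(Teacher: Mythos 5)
Your proof is correct and follows essentially the same route as the paper's: Krasner's lemma gives the upper bound $\omega_K(a)$, the bound $[K(a,c):K]<[K(a):K]^2$ gives a uniform control on the ramification index and hence on the denominators of the values $v(a-c)$, and a bounded-above subset of a fixed discrete group attains its supremum.
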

\begin{proof}
By Krasner's Lemma, for each of the relevant $c$'s we have  $v(a-c)\leq \omega_K(a)$. Note that the ramification index of $K(a,c)$ over $K$ is (strictly) bounded by $[K(a):K]^2$. In particular, since the value $v(a-c)$ belongs to $\Gamma_{a-c}$, it follows that there exists $N\in\N$, independent from each of the above $c$'s, such that $Nv(a-c)\in\Gamma_v\cong\Z$. Hence, the set $\{v(a-c)\mid c\in\overline{K},[K(c):K]<[K(a):K]\}$ (which is a subset of $\Gamma_{\overline{v}}$) is bounded from above and its elements have bounded torsion. It follows that this set has a maximum, which is equal to $\delta_K(a)$ by its very definition.
\end{proof}

Similar to Krasner's lemma, we have the following fundamental principle (see \cite[Theorem 1.1]{KhaSa}), first discovered in \cite{PZstructure}.
\begin{Thm}\label{propertydistinguished}
Suppose that $a,b\in\overline{K}$ are such that $v(a-b)>\delta_K(b)$. Then:
\begin{itemize}
\item[i)] $\Gamma_{b}\subseteq\Gamma_{a}$.
\item[ii)] $k_{b}\subseteq k_{a}$.
\item[iii)] $[K(b):K]\mid [K(a):K]$.
\end{itemize}
\end{Thm}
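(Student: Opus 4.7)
The plan is to reinterpret the hypothesis as a minimum-degree condition on $b$ and then descend inductively along a chain of distinguished approximations of $b$, absorbing one link of the chain into $K(a)$ at each step.

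First, I would reformulate the hypothesis. If $c \in \overline{K}$ satisfies $[K(c):K] < n := [K(b):K]$, then by the definition of $\delta_K(b)$ and the ultrametric inequality,
$$v(a-c) = v((a-b)+(b-c)) = v(b-c) \leq \delta_K(b) < v(a-b).$$
Thus $b$ is an element of minimum $K$-degree in the closed ball $B(a, v(a-b)) = \{c \in \overline{K} : v(a-c) \geq v(a-b)\}$. This geometric restatement of the assumption is what I would exploit throughout.

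Next, iterating Lemma \ref{deltaK maximum}, I would build a \emph{distinguished chain} $b = c_0, c_1, \ldots, c_r$ in $\overline{K}$ with $c_r \in K$, strictly decreasing degrees $[K(c_i):K]$, and $v(c_i - c_{i+1}) = \delta_K(c_i)$ for each $i$. A routine ultrametric computation, together with the fact that the values $\delta_K(c_i)$ strictly decrease along the chain, shows that the hypothesis propagates: $v(a - c_i) > \delta_K(c_i)$ for every $i \leq r$. I would then prove (i)--(iii) by downward induction along the chain. The base case $c_r \in K$ is immediate, since $\Gamma_v \subseteq \Gamma_a$, $k_v \subseteq k_a$, and $1 \mid [K(a):K]$. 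For the inductive step, passing from $c_{i+1}$ to $c_i$, I would work over the complete valued field $K(c_{i+1})$ and analyze the minimal polynomial $g_i(X) \in K(c_{i+1})[X]$ of $c_i$ evaluated at $a$: among the $K(c_{i+1})$-conjugates of $c_i$, only $c_i$ itself should be at $v$-distance greater than $\delta_K(c_i)$ from $a$, so the Newton polygon of $g_i$ over $K(c_{i+1})$ computed at $a$ has a unique distinguished slope. Hensel's lemma applied in the complete field $K(a, c_{i+1})$ then extracts the corresponding factor, forcing the ramification index, residue degree, and therefore the full degree of $K(c_i)/K(c_{i+1})$ to be realized inside $K(a, c_{i+1})/K(c_{i+1})$. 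Multiplicativity of these invariants along the chain then yields (i), (ii), and (iii).

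The main obstacle, in my view, is the Newton-polygon/Hensel step in the inductive transition: specifically, rigorously establishing that every $K(c_{i+1})$-conjugate of $c_i$ other than $c_i$ itself lies at $v$-distance at most $\delta_K(c_i)$ from $a$. This will require invoking the minimum-degree property of the distinguished chain at a finer level (over $K(c_{i+1})$ rather than over $K$), together with some care in handling possible inseparability in positive residue characteristic. Once this Newton-polygon analysis is in place, the lifting of values and residues from $K(c_i)$ to $K(a, c_{i+1})$ is a routine Hensel argument, and combining the divisibilities along the chain will give the full theorem.
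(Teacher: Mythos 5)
The paper itself does not prove Theorem \ref{propertydistinguished}: it quotes it from \cite[Theorem 1.1]{KhaSa}, first established in \cite{PZstructure}, and those proofs run through the theory of minimal pairs and the invariants of the associated residually transcendental extensions, not through a Krasner--Hensel chain argument. Measured on its own terms, your proposal has a genuine gap at exactly the step you flag as the main obstacle, and it is not a repairable technicality. The claim that among the $K(c_{i+1})$-conjugates of $c_i$ only $c_i$ lies at $v$-distance greater than $\delta_K(c_i)$ from $a$ fails precisely in the regime the theorem is about. The quantity $\delta_K(c_i)$ only constrains distances from $c_i$ to elements of \emph{strictly smaller} degree; it says nothing about conjugates, which have the same degree. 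In the nontrivial range $\delta_K(b)<v(a-b)\leq\omega_K(b)$ (if $v(a-b)>\omega_K(b)$, Krasner already gives $K(b)\subseteq K(a)$ and there is nothing to prove), there is a conjugate $b'\neq b$ with $v(b-b')=\omega_K(b)\geq v(a-b)$, hence $v(a-b')\geq v(a-b)>\delta_K(b)$, so at least two roots of the relevant minimal polynomial lie in the ``forbidden'' region and the Newton polygon has no unique distinguished slope. Concretely, for $K=\Q_p$ and $b=p^{1/p}$ one has $\delta_K(b)=1/p$ while $\omega_K(b)=1/p+1/(p-1)$; choosing $a$ with $v(a-b)$ strictly between these values gives $v(a-\zeta_p b)=v(a-b)>\delta_K(b)$ as well. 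Your suggested fix (invoking minimality of degree over $K(c_{i+1})$ instead of over $K$) does not address this, because the obstruction comes from elements of \emph{equal} degree.

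There is a second, structural gap: even where the Hensel/Newton-polygon step could be carried out, it yields information about the compositum $K(a,c_{i+1})$ over $K(c_{i+1})$, not about $K(a)$ over $K$. Since $K(c_{i+1})$ need not embed into $K(a)$ (this is exactly what the theorem does \emph{not} assert), knowing that the ramification index, residue degree, or degree of $K(c_i)/K(c_{i+1})$ is realized inside $K(a,c_{i+1})/K(c_{i+1})$ does not give $\Gamma_{c_i}\subseteq\Gamma_a$, $k_{c_i}\subseteq k_a$, or $[K(c_i):K]\mid[K(a):K]$; the induction only closes at the very first step, where $c_r\in K$ and the compositum is $K(a)$ itself, so ``multiplicativity along the chain'' never assembles into statements about $K(a)$. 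Your preliminary reductions are fine: the reformulation that $b$ has minimal degree in the ball $\{c:v(a-c)\geq v(a-b)\}$ is exactly the minimal-pair condition used in the paper, and complete distinguished chains with strictly decreasing $\delta_K$-values do exist over a complete discretely valued field, provided you choose each $c_{i+1}$ of minimal degree among the elements realizing $\delta_K(c_i)$. But the engine of the argument is missing; to prove the theorem you should instead follow \cite{PZstructure} or \cite{KhaSa} and compare the value group and residue field of the residually transcendental valuation attached to the minimal pair $(b,v(a-b))$ with those of $K(a)$.
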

Next, we recall the definition of distinguished pair introduced in \cite[p. 105]{PZstructure}.
\begin{Def}
A pair of elements $(b,a)\in\overline{K}^2$ is said to be \emph{distinguished} if the following hold:
\begin{itemize}
\item[i)] $[K(b):K]<[K(a):K]$.
\item[ii)] for all $c\in\overline{K}$ such that $[K(c):K]<[K(a):K]$ then $v(a-c)\leq v(a-b)$.
\item[iii)] for all $c\in\overline{K}$ such that $[K(c):K]<[K(b):K]$ then $v(a-c)< v(a-b)$.
\end{itemize}
\end{Def}
Part of the definition of distinguished pair is related to the notion of minimal pair, which we now recall (see for example \cite{APZAll, APZTheorem, APZMinimal}). 
\begin{Def}
Let $(a,\delta)\in\overline{K}\times\Gamma_{\overline{v}}$. We say that $(a,\delta)$ is a \emph{minimal pair} if for every $c\in\overline{K}$ such that $[K(c):K]<[K(a):K]$ we have $v(a-c)<\delta$.
\end{Def}
In other words, $(a,\delta)$ is a minimal pair if for every $b\in B(a,\delta)=\{x\in\overline{K}\mid v(a-x)\geq\delta\}$, we have $[K(b):K]\geq[K(a):K]$ (i.e., $a$ is a 'center' of the ball $B(a,\delta)$ of minimal degree). By Lemma \ref{deltaK maximum}, $(a,\delta)$ is a minimal pair if and only if $\delta>\delta_K(a)$. In particular, if $\delta>\omega_K(a)$, then $(a,\delta)$ is a minimal pair.

\begin{Rems}\label{properties distinguised pair}
Let $(b,a)$ be a distinguished pair. Note that i) and ii) imply that $v(a-b)=\delta_K(a)$. In fact, by i) and ii), it immediately follows that the inequality $\leq$ holds. Conversely, by ii) we also have that $v(a-b)\geq v(a-c)$ for all $c$ such that $[K(c):K]<[K(a):K]$, that is, $v(a-b)\geq \delta_K(a)$.

Note that iii) is equivalent to the following:
\begin{itemize}
\item[iii')] for all $c\in\overline{K}$ such that $[K(c):K]<[K(b):K]$ then $v(b-c)<v(a-b)$. 
\end{itemize}
which precisely says that $(b,v(a-b))$ is a minimal pair with respect to $K$. In fact, if iii) holds and $c\in\overline{K}$ is such that $[K(c):K]<[K(b):K]$ then $v(b-c)=v(b-a+a-c)=v(a-c)<v(a-b)$. Similarly, iii') implies iii). Note also that iii') is equivalent to 
$$v(a-b)>\delta_K(b).$$
In particular, by the above Theorem, $\Gamma_{b}\subseteq\Gamma_{a}$, $k_{b}\subseteq k_{a}$ and $[K(b):K]\mid [K(a):K]$. 

Finally, note also that  $\delta_K(b)<\delta_K(a)$.
\end{Rems} 

\section{Stacked pseudo-convergent sequences of  $\oQp$}\label{stacked sect}

Let $\PP\subset\Z$ be the set of prime numbers and let $p\in\PP$ be a fixed prime. We let $\Z_{(p)}$ be the localization of $\Z$ at the prime ideal $p\Z$, $\Z_p$ the ring of $p$-adic integers and $\Q_p$ its field of fractions, the field of $p$-adic numbers. If $v_p$ denotes the usual $p$-adic valuation, then $\Z_p$ ($\Q_p$, respectively) is the completion of $\Z$ ($\Q$, respectively) with respect to the $p$-adic valuation. We denote by $\oQp$ a fixed algebraic closure of $\Q_p$ and still denote the unique extension of $v_p$ to $\oQp$ by $v_p$. Note that $\oQp$ is a rank one non-discrete valued field with valuation domain denoted by $\oZp$, the integral closure of $\Z_p$ in $\oQp$. We will use the well-known fact that $\Q_p$ has only finitely many extensions of a given degree  (see for example \cite[Corollary 2, Chapter V, p. 202]{Nark}).

Finally, we let $\C_p$ be the completion of $\oQp$ with respect to the $p$-adic valuation and by $\O_p$ the completion of $\oZp$; $v_p$ still denotes the unique extension of $v_p$ to $\C_p$. For $\alpha\in\oQp\setminus\Q_p$, for short we set $\delta_{\Q_p}(\alpha)=\delta(\alpha)$ and $\omega_{\Q_p}(\alpha)=\omega(\alpha)$. For $\alpha\in\C_p$, we denote by $e_{\alpha}$ ($f_{\alpha}$, respectively) the ramification index (the residue field degree, respectively) of $\Q_p(\alpha)$ over $\Q_p$. Clearly, if $\alpha\in\oQp$ then $e_{\alpha}\cdot f_{\alpha}<\infty$; we show that the converse holds in Remark \ref{eafaoQp}. Note that each element of $\C_p\setminus\oQp$ is transcendental over $\Q_p$; we call such elements simply transcendental. For a transcendental element $\alpha\in\C_p$, even if $e_{\alpha}\cdot f_{\alpha}=\infty$, we will show in Theorem \ref{prescribed residue field and value group} that either one of $e_{\alpha}$ or $f_{\alpha}$ can be finite.  



\subsection{Residually algebraic torsion extensions of $\Z_p$}\label{stackedseq}

In this section we describe residually algebraic torsion extensions of $\Z_p$ to $\Q_p(X)$ by means of a suitable class of pseudo-convergent sequences of transcendental type contained in $\oQp$, called stacked sequence, which we now introduce. This definition is a generalization of \cite[p. 135]{APZclosed}\footnote{The notion of distinguished sequence was introduced in \cite{APZclosed}. We cannot borrow that term here for our sequences for the following reason: by Lemma \ref{distinguishedpcv}, a stacked sequence is pseudo-convergent, and distinguished pseudo-convergent sequences have already been defined by P. Ribenboim on p. 474 of   \emph{Corps maximaux et complets par des valuations de Krull}. Math. Z. 69 (1958), 466–479, to denote pseudo-convergent sequences of a valued field whose breadth ideal is a non-maximal prime ideal.}.

\begin{Def}\label{stacked}
Let $E=\{s_n\}_{n\geq0}\subset\oQp$ be a sequence with $s_0\in\Q_p$. For every $n\geq0$, we consider the following properties:
\begin{itemize}
\item[i)] $[\Q_p(s_n):\Q_p]<[\Q_p(s_{n+1}):\Q_p]$.
\item[ii)] for every $c\in\oQp$ such that $[\Q_p(c):\Q_p]<[\Q_p(s_{n+1}):\Q_p]$, $v(s_{n+1}-c)\leq v(s_{n+1}-s_n)$.
\item[iii)] for every $c\in\oQp$ such that $[\Q_p(c):\Q_p]<[\Q_p(s_{n}):\Q_p]$, $v(s_{n}-c)<v(s_{n+1}-s_n)$.
\end{itemize}
We say that $E$ is \emph{unbounded} if i) holds for every $n$, \emph{stacked} if i) and iii) hold for every $n$, and \emph{strongly stacked} if i), ii), iii) hold for every $n$. Equivalently, $E$ is stacked if i) holds and $(s_n,\delta_n=v(s_{n+1}-s_n))$ is a minimal pair for every $n\geq0$ and $E$ is strongly stacked if $(s_{n},s_{n+1})$ is distinguished for every $n\geq0$.
\end{Def}

\begin{Rem}\label{pcvdist}
Let $E=\{s_n\}_{n\in\N}\subset\oQp$ be a stacked sequence. Note that the sequence $\{v(s_{n+1}-s_{n})=\delta_{n}\}_{n\in\N}$ is strictly increasing, since $[\Q_p(s_{n-1}):\Q_p]<[\Q_p(s_n):\Q_p]$ and $(s_n,\delta_n)$ is a minimal pair.   
In the original definition of distiguished sequence $E$ in \cite{APZclosed}, the sequence $\{\delta_n\}_{n\in\N}$  is   unbounded, thus, in this case   $E$ is a Cauchy sequence. In our setting we are not imposing that restriction; we show in Lemma \ref{distinguishedpcv} below that a stacked sequence is a pseudo-convergent sequence of transcendental type of $\oQp$.

The motivation for the terminology of these kind of sequences, is due to the following fact. For each $n\in\N$, for short we set $\Gamma_n=\Gamma_{s_n}$ and $k_n=k_{s_n}$ (i.e., the value group and the residue field  of the valuation domain $O_{s_n}$ of $\Q_p(s_n)$, respectively).  By Remarks \ref{properties distinguised pair}, $v(s_{n+1}-s_n)>\delta(s_n)$. Hence, by Theorem \ref{propertydistinguished}, we have $\Gamma_{n}\subseteq \Gamma_{n+1}$ and $k_{n}\subseteq k_{n+1}   $. For each $n\in\N$, we set $e_n=e(\Q_p(s_n)\mid\Q_p)$ and $f_n=f(\Q_p(s_n)\mid\Q_p)$, the ramification index and the residue field degree of $O_{s_n}$ over $\Z_p$, respectively; we remark that $[\Q_p(s_n):\Q_p]=e_nf_n=d_n$ for each $n\in\N$, and since $\{d_n\}_{n\in\N}$ is unbounded by assumption, either $\{e_n\}_{n\in\N}$ is unbounded or $\{f_n\}_{n\in\N}$ is unbounded. Since $e_n\mid e_{n+1}$ for each $n\in\N$, $\{e_n\}_{n\in\N}$ is bounded if and only if $e_n=e$ for all $n\in\N$ sufficiently large. Similarly for $\{f_n\}_{n\in\N}$. 

By Remarks \ref{properties distinguised pair}, condition ii) is equivalent to $\delta_n=v(s_n-s_{n+1})=\delta(s_{n+1})$ (note that in general the inequality $\delta_n\leq\delta(s_{n+1})$ holds). In other words, among all the elements $c\in\oQp$ such that $[\Q_p(s_n):\Q_p]\leq[\Q_p(c):\Q_p]<[\Q_p(s_{n+1}):\Q_p]$, $s_n$ is one of those which is closest to $s_{n+1}$. 
\end{Rem}

Let $E=\{t_n\}_{n\in\N}\subset\oQp$ be a pseudo-convergent sequence. If $\{[\Q_p(t_n):\Q_p]\mid n\in\N\}$ is bounded, then $E$ is contained in a finite extension $K$ of $\Q_p$ and hence $E$ is Cauchy and therefore converges to an element $\alpha\in K$. In particular, if $E$ is  of transcendental type, then the set $\{[\Q_p(t_n):\Q_p]\}_{n\in\N}$ is necessarily unbounded.  Stacked sequences are of this kind, as the next lemma shows.

\begin{Lem}\label{distinguishedpcv}
Let $E\subset\oQp$ be a stacked sequence. Then $E$ is a pseudo-convergent sequence of transcendental type.
\end{Lem}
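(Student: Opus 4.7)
The plan is to first verify pseudo-convergence, then rule out the algebraic type by pitting the minimal-pair condition against Kaplansky's existence theorem for algebraic pseudo-limits.

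For pseudo-convergence, I would start by recording (as already noted in Remark~\ref{pcvdist}) that the gauge $\{\delta_n\}_{n\in\N}$ is strictly increasing: property (i) gives $[\Q_p(s_n):\Q_p]<[\Q_p(s_{n+1}):\Q_p]$, so taking $c=s_n$ in the minimal-pair condition for $(s_{n+1},\delta_{n+1})$ yields $\delta_n=v(s_{n+1}-s_n)<\delta_{n+1}$. I would then prove by induction on $m-n$ that $v(s_n-s_m)=\delta_n$ for all $n<m$. The base case $m=n+1$ is by definition; the inductive step combines $v(s_n-s_{m-1})=\delta_n$ with $v(s_{m-1}-s_m)=\delta_{m-1}>\delta_n$ via the ultrametric equality (when two summands have distinct valuations, the minimum is attained exactly). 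Consequently, for $n<m<k$ one has $v(s_n-s_m)=\delta_n<\delta_m=v(s_m-s_k)$, which is the pseudo-convergence condition.

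For the transcendental type, I would argue by contradiction. If $E$ is of algebraic type over $\Q_p$, then by Kaplansky's theorem on algebraic pseudo-limits (recalled in \S\ref{pcv}), there exists $\alpha\in\oQp$ which is a pseudo-limit of $E$, i.e., $v(s_n-\alpha)=\delta_n$ for every $n$. Set $d=[\Q_p(\alpha):\Q_p]<\infty$. By (i) the sequence $\{[\Q_p(s_n):\Q_p]\}_{n\in\N}$ strictly increases in $\N$ and hence becomes arbitrarily large, so there exists $n_0$ with $d<[\Q_p(s_n):\Q_p]$ for all $n\geq n_0$. Then condition (iii), applied at any such $n$ with $c=\alpha$, forces $v(s_n-\alpha)<\delta_n$, contradicting the pseudo-limit equality; hence $E$ must be of transcendental type.

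The argument is essentially mechanical; there is no genuine obstacle beyond invoking Kaplansky's theorem to extract an algebraic pseudo-limit from the algebraic-type hypothesis. The core mechanism is that the minimal-pair requirement in (iii) forbids any element of $\oQp$ of degree less than $[\Q_p(s_n):\Q_p]$ from being within distance $\delta_n$ of $s_n$, which is exactly what an algebraic pseudo-limit would do for all sufficiently large $n$.
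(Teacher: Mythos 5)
Your proposal is correct and follows essentially the same route as the paper: the strictly increasing gauge plus the ultrametric inequality gives pseudo-convergence, and the minimal-pair condition (iii) combined with the unboundedness of the degrees $[\Q_p(s_n):\Q_p]$ shows no element of $\oQp$ can be a pseudo-limit, which (via the equivalence recalled in \S\ref{pcv}) yields transcendental type; the paper states this directly while you phrase it as a contradiction through Kaplansky's theorem, which is the same mechanism in contrapositive form. The only nitpick is that ``algebraic type over $\Q_p$'' should simply read ``algebraic type'' (as a sequence of the valued field $\oQp$), since that is the hypothesis from which the pseudo-limit $\alpha\in\oQp$ is extracted; your argument as written does establish the full statement.
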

\begin{proof} Let $E=\{s_n\}_{n\in\N}$ and set $\delta_n=v(s_{n+1}-s_n)$, for each $n\in\N$. We have already observed in Remark \ref{pcvdist} that $\{\delta_n\}_{n\in\N}$ is a strictly increasing sequence. Moreover, for every $m>n$, we have $v(s_n-s_m)>v(s_n-s_{n-1})$. In particular, $v(s_{n-1}-s_m)=v(s_{n-1}-s_n)$ for every $m\geq n$. Let now $n<m<k$. Then
$$v(s_n-s_m)=v(s_n-s_{n+1})<v(s_m-s_{m+1})=v(s_m-s_k)$$
which shows that $E$ is a pseudo-convergent sequence.

We prove now that $E$ is of transcendental type. Let $\alpha\in\oQp$. Then there exists $n\in\N$ such that $[\Q_p(\alpha):\Q_p]<[\Q_p(s_n):\Q_p]$. Since $(s_n,\delta_n)$ is a minimal pair, $v(s_n-\alpha)<\delta_{n}$, so, in particular, $\alpha$ cannot be a pseudo-limit of $E$. This shows that $E$ has no pseudo-limits in $\oQp$, thus $E$ is of transcendental type.
\end{proof}

Let $E=\{s_n\}_{n\in\N}\subset\oQp$ be a stacked sequence. In particular, by Lemma \ref{distinguishedpcv}, the sequence $\{\delta_n=v(s_{n+1}-s_n)\}_{n\in\N}$ is the gauge of the pseudo-convergent sequence $E$. Moreover, by the same Lemma, if $E$ is Cauchy, then $E$ converges to a transcendental element $\alpha\in\C_p$. 

The next proposition shows that any residually algebraic torsion extension of $\Z_p$ to $\Q_p(X)$ is obtained by means of a pseudo-convergent sequence of transcendental type of $\oQp$. We recall that if $E\subset\oQp$ is a pseudo-convergent sequence of transcendental type, then $\oZp_E$, the associated valuation domain of $\oQp(X)$, is   an immediate extension of $\oZp$ and conversely every immediate extension of $\oZp$ to $\oQp(X)$ can be realized in this way (see for example \cite{Kap, PS2}). If $\Z_{p,E}=\oZp_E\cap\Q_p(X)$, then $\Z_{p,E}$ is a residually algebraic torsion extension of $\Z_p$ to $\Q_p(X)$.

\begin{Prop}\label{ratpcvtransc}
Let $W$ be a residually algebraic torsion extension of $\Z_p$ to $\Q_p(X)$. Then there exists a pseudo-convergent sequence $E\subset\oQp$ of transcendental type such that 
$$W=\Z_{p,E}=\{\phi\in\Q_p(X)\mid \phi(s_n)\in \oZp,\text{ for all sufficiently large }n\in\N\}.$$
\end{Prop}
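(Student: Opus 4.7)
The plan is to pass from $W$ to an extension $\wW$ to $\oQp(X)$ lying over $\oZp$, show that this extension is immediate, and then invoke Kaplansky's characterization to produce the pseudo-convergent sequence $E$ inside $\oQp$. Finally, intersecting back with $\Q_p(X)$ recovers $W$.

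First, by standard valuation theory (extensions of valuations to algebraic closures of the quotient field and then to the ambient rational function field), choose a valuation domain $\wW$ of $\oQp(X)$ with $\wW\cap\Q_p(X)=W$ and $\wW\cap\oQp=\oZp$. Let $\widetilde w$ be an associated valuation, with residue field $k_{\widetilde w}$ and value group $\Gamma_{\widetilde w}$.

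Next, I would verify that $\wW$ is immediate over $\oZp$. Since $\oQp(X)/\Q_p(X)$ is algebraic, $k_{\widetilde w}/k_w$ is algebraic; combined with $k_w/\F_p$ algebraic (hypothesis), we obtain that $k_{\widetilde w}$ is algebraic over $\F_p$, hence contained in $\overline{\F_p}$, which is the residue field of $\oZp$. The reverse containment is automatic, so the residue fields coincide. For the value groups, $\Gamma_{\widetilde w}/\Gamma_w$ is a torsion group (being the value group of an algebraic extension of valued fields), so $\Gamma_{\widetilde w}$ lies in the divisible hull of $\Gamma_w$, which by the torsion hypothesis is contained in $\Q$, the value group of $\oZp$. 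Again equality follows.

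Now, by Kaplansky's theorem (as recalled in \S\ref{pcv}; cf.\ the correspondence between immediate extensions and pseudo-convergent sequences developed in \cite{Kap, PS1, PS2}), the immediate extension $\oZp\subset \wW$ realized on the transcendental element $X$ must come from a pseudo-convergent sequence $E=\{s_n\}_{n\in\N}\subset\oQp$ having $X$ as pseudo-limit; that is, $\wW=\oZp_E$. The sequence $E$ must be of transcendental type: if it were of algebraic type, then since $\oQp$ is algebraically closed, $E$ would admit a pseudo-limit $\alpha\in\oQp$, and the map $X\mapsto\alpha$ would induce $\oZp_E\cong \oZp_\alpha$, contradicting the transcendence of $X$ over $\oQp$ (equivalently, contradicting immediateness of $\oZp_E$ over $\oZp$).

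Finally, restricting to $\Q_p(X)$ gives
\[
W=\wW\cap\Q_p(X)=\oZp_E\cap\Q_p(X)=\{\phi\in\Q_p(X)\mid \phi(s_n)\in\oZp\text{ for all sufficiently large }n\}=\Z_{p,E},
\]
which is the desired identification. The only delicate point is the verification of immediateness and the correct application of Kaplansky: the torsion and residually algebraic hypotheses on $W$ over $\Z_p$ are exactly what is needed to upgrade to immediateness over $\oZp$, since $\oZp$ has residue field $\overline{\F_p}$ and value group $\Q$.
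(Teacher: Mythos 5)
Your proposal is correct and follows essentially the same route as the paper: extend $W$ to $\oQp(X)$, observe that the extension is immediate over $\oZp$ (the paper states this without the detailed residue field/value group check you supply), invoke Kaplansky's characterization to obtain a pseudo-convergent sequence of transcendental type with $\overline{W}=\oZp_E$, and contract back to $\Q_p(X)$.
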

\begin{proof}
Let $\overline{W}$ be an extension of $W$ to $\oQp(X)$. Then $\overline{W}$ is an immediate extension of $\oZp$ to $\oQp(X)$ (and, in particular, is a residually algebraic torsion extension of $\oZp$). By \cite[Theorems 1 \& 2]{Kap} or \cite[Theorem 6.2, a)]{PS2}, there exists a pseudo-convergent sequence $E\subset\oQp$ of transcendental type such that $\overline{W}=\oZp_E$. The claim follows contracting down to $\Q_p(X)$.
\end{proof}


Clearly, not every pseudo-convergent sequence of transcendental type in $\oQp$ is stacked. However, the next theorem is the converse of Lemma \ref{distinguishedpcv}: it shows that any  pseudo-convergent sequence of transcendental type is equivalent to a strongly stacked  sequence. In particular, every stacked sequence is equivalent to a strongly stacked sequence.  Moreover, given a valuation domain $\Z_{p,E}$ of $\Q_p(X)$ associated to a pseudo-convergent sequence $E\subset\oQp$ of transcendental type,   without loss of generality we may also assume that $E$ is strongly stacked.

By \cite[Proposition 2.2]{APZclosed}, every transcendental element $t\in\C_p$ is the limit of a strongly stacked sequence $E$ of $\oQp$. The next theorem is the analogous of that result for residually algebraic extensions $W$ of $\Z_p$ to $\Q_p(X)$: for such a valuation $W$, there exists a strongly stacked sequence $E\subset\oQp$ such that $W=\Z_{p,E}$; it is not difficult to show that, for a transcendental element $t\in\C_p$, the valuation domain $\Z_{p,t}=\{\phi\in\Q_p(X)\mid\phi(t)\in \O_p\}$ is a residually algebraic torsion extension of $\Z_p$. 

\begin{Thm}\label{Epcvtranscdistinguished}
Let $E\subset\oQp$ be a pseudo-convergent sequence of transcendental type. Then there exists a strongly stacked sequence $E'\subset\oQp$ which is equivalent to $E$. In particular, given a residually algebraic torsion extension $W$ of $\Z_p$ to $\Q_p(X)$, there exists a strongly stacked sequence $E'\subset\oQp$ such that $W=\Z_{p,E'}$.
\end{Thm}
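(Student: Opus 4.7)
The plan is to mimic the construction of \cite[Proposition 2.2]{APZclosed}, where any transcendental $t\in\C_p$ is approximated by a strongly stacked Cauchy sequence, and to adapt it to the case where $E=\{t_n\}_{n\in\N}$ is a pseudo-convergent sequence of transcendental type without a pseudo-limit in $\oQp$. The first observation is that the degrees $d_n=[\Q_p(t_n):\Q_p]$ must be unbounded: otherwise $E$ would (after extracting a subsequence, using that $\Q_p$ has finitely many extensions of each degree) be contained in some finite extension $L/\Q_p$, which is complete, and hence would be Cauchy in $L$ and converge to a pseudo-limit $\alpha\in L$, contradicting the transcendental type. So, after passing to a subsequence (which gives an equivalent pseudo-convergent sequence), I may assume that $\{d_n\}_{n\in\N}$ is strictly increasing, securing condition (i) of Definition~\ref{stacked}.

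Next, I would recursively replace the $t_n$'s by new elements $s_n'\in\oQp$ in order to build a strongly stacked sequence. Setting $s_0'=t_0$, at stage $n$ the goal is to choose $s_{n+1}'$ of degree strictly greater than $[\Q_p(s_n'):\Q_p]$ such that $(s_n',s_{n+1}')$ is a distinguished pair and such that the equivalence with $E$ is preserved. The mechanism is: choose $m$ large enough that $d_m$ exceeds $[\Q_p(s_n'):\Q_p]$ and $\delta_m>\delta_{\Q_p}(s_n')$; by Lemma~\ref{deltaK maximum} the value $\delta_{\Q_p}(t_m)$ is attained, so we can pick $s_{n+1}'$ as a minimal-degree element of the ball $B(t_m,\delta^*)$ for a suitable $\delta^*$ chosen so that $s_n'\in B(t_m,\delta^*)$ and $\delta^*>\delta_{\Q_p}(s_n')$. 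By Theorem~\ref{propertydistinguished} (the fundamental principle recalled in the preliminaries) together with the characterization in Remarks~\ref{properties distinguised pair}, this choice forces $(s_n',s_{n+1}')$ to be distinguished. Choosing $m$ larger at each step keeps $v(s_{n}'-t_m)>\delta_m$ for the matching indices, which by the criterion in \cite[Proposition 5.3]{PS2} ensures $E'\sim E$.

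The main technical obstacle I expect is in the recursive step: one has to ensure simultaneously that (a) $(s_n',s_{n+1}')$ satisfies \emph{both} conditions (ii) and (iii) of a distinguished pair — not merely that $(s_n',\delta_n')$ is a minimal pair — and (b) the resulting sequence $\{s_n'\}_{n\in\N}$ remains equivalent to $E$ in the sense of \S\ref{pcv}. Condition (iii), rephrased as $\delta_n'=v(s_{n+1}'-s_n')>\delta_{\Q_p}(s_n')$, is easy to arrange by going far enough out in $E$. The subtler condition (ii), namely $v(s_{n+1}'-s_n')=\delta_{\Q_p}(s_{n+1}')$, forces $s_n'$ to be an actual closest element to $s_{n+1}'$ among elements of smaller degree; this is where the minimal-degree-center selection of $s_{n+1}'$ relative to $s_n'$ (rather than a generic perturbation of $t_{n+1}$) is essential.

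Finally, the second assertion of the theorem is a direct consequence: by Proposition~\ref{ratpcvtransc}, any residually algebraic torsion extension $W$ of $\Z_p$ to $\Q_p(X)$ is of the form $W=\Z_{p,E}$ for some pseudo-convergent sequence $E\subset\oQp$ of transcendental type. Applying the first part of the theorem produces an equivalent strongly stacked $E'\subset\oQp$; since equivalent pseudo-convergent sequences define the same valuation domain on $\oQp(X)$ (and hence the same contraction to $\Q_p(X)$) by \cite[Proposition 5.3]{PS2}, we conclude $W=\Z_{p,E'}$.
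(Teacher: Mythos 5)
Your overall strategy---recursively manufacture distinguished pairs and then check equivalence, deducing the claim about $W$ from Proposition~\ref{ratpcvtransc} and \cite[Proposition 5.3]{PS2}---is the same as the paper's, and that last reduction is fine. But the key recursive step, as you describe it, does not work. If $\delta^*>\delta_{\Q_p}(s_n')$ and $s_n'\in B(t_m,\delta^*)$, then $B(t_m,\delta^*)$ contains no element of degree smaller than $[\Q_p(s_n'):\Q_p]$: for such a $c$ in the ball one would get $v_p(s_n'-c)\geq\min\{v_p(s_n'-t_m),v_p(t_m-c)\}\geq\delta^*>\delta_{\Q_p}(s_n')$, which is impossible. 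Since the ball does contain $s_n'$ itself, a minimal-degree element of it has degree exactly $[\Q_p(s_n'):\Q_p]$ (it could be $s_n'$), so your choice of $s_{n+1}'$ never increases the degree and condition i) of Definition~\ref{stacked} fails; the recursion stalls. If instead you shrink the ball so as to exclude all elements of degree at most $[\Q_p(s_n'):\Q_p]$, you recover degree growth but lose exactly the point you yourself flag as subtle: condition ii) demands $v_p(s_{n+1}'-c)\leq v_p(s_{n+1}'-s_n')$ for \emph{every} $c$ of smaller degree, and proximity of $s_{n+1}'$ to $t_m$ gives no control over this unless $s_n'$ is already a weakly closest approximation to the new center among elements of its degree or less---a property your construction, anchored at a center $t_m$ that changes with $n$, never establishes. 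The same issue infects the base step: Definition~\ref{stacked} requires $s_0\in\Q_p$, and $s_0'=t_0$ need not be rational; more importantly, some optimality of $s_0$ among rationals is needed later to verify ii) against $c\in\Q_p$.

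This is precisely where the paper's work lies: all choices are anchored at the single element $X$ via the valuation $\overline v_E$ of $\oZp_E\subset\oQp(X)$. One first takes $s_0\in\Q_p$ with $v_E(X-s_0)$ \emph{maximal} over $\Q_p$; then $s_n$ is taken in the set $B_n$ of $\alpha\in\oQp$ with $\overline v_E(X-\alpha)>\overline v_E(X-s_{n-1})$ and minimal degree jump over $s_{n-1}$, and, among those, with $\overline v_E(X-\alpha)$ \emph{maximal}. Both suprema must be shown to be attained, using that a pseudo-convergent non-Cauchy sequence cannot lie in a complete field (a finite extension of $\Q_p$), and that a Cauchy behaviour of $M_E(X,\Q_p)$ would contradict the transcendental type of $E$. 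Condition ii) is then verified by a case split on $[\Q_p(c):\Q_p]$: the degree-jump minimality handles degrees strictly between those of $s_{n-1}$ and $s_n$, the maximality of $\delta_{n-1}$ on $B_{n-1}$ (and, at the bottom, on $\Q_p$) handles degree equal to that of $s_{n-1}$, and an induction on the distinguishedness of $(s_{n-2},s_{n-1})$ handles smaller degrees---a range of $c$'s your sketch never addresses. Finally, the equivalence $E'\sim E$ (mutually cofinal gauges, $\Br(E)=\Br(E')$, and the interlacing condition) is also proved through the fixed anchor $\overline v_E(X-\cdot)$, using that $(s_n,\delta_n)$ is a minimal pair; the one-line criterion you invoke, with the moving centers $t_m$, is not a proof. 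So the idea is right in outline, but the maximality/attainment selections and the inductive verification of condition ii), which are the substance of the paper's argument, are missing and are not recoverable from the single-ball minimal-degree selection you propose.
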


\begin{proof}
Let $E=\{t_n\}_{n\in\N}$ and let $\overline v_E$ be a valuation associated to $\oZp_{E}\subset\oQp(X)$. 

First, we consider the following subset of $\Gamma_{v_E}\subseteq\Q$:
$$M_E(X,\Q_p)=\{v_E(X-s)\mid s\in\Q_p\}.$$
If $M_E(X,\Q_p)$ is not bounded, then there exists a sequence $\{s_n\}_{n\in\N}\subset\Q_p$ such that $v_E(X-s_n)$ tends to $\infty$. Necessarily, the sequence $\{s_n\}_{n\in\N}$ is  Cauchy and so converges to an element $s$ of $\Q_p$. Now, for every $n$, $v_E(X-s_n)=\overline v_E(X-s_n)=v(t_m-s_n)$ for all $m$ sufficiently large since $E$ is of transcendental type (see \S \ref{pcv}). Hence, $E$ would be a Cauchy sequence equivalent to $\{s_n\}_{n\in\N}$ and $E$ would converge to $s$, too, which is not possible. Let then $\delta_0=\sup M_E(X,\Q_p)\in\R$. We claim that $\delta_0\in M_E(X,\Q_p)$, that is, $\delta_0$ is a maximum. Suppose otherwise: there exists a sequence $\{r_k\}_{k\in\N}\subset\Q_p$ such that $v_E(X-r_k)\nearrow\delta_0$. Then $\{r_k\}_{k\in\N}\subset\Q_p$ would be a pseudo-convergent sequence which is not Cauchy, which is not possible since $\Q_p$ is a complete valued field. Hence, there exists $s_0\in\Q_p$ such that $v_E(X-s_0)=\delta_0$.

For $n>0$, we now choose $s_n\in\oQp$ so that $(s_{n-1},s_n)$ is distinguished. Let $B_n$ be the subset of $\alpha$'s in $\oQp$ satisfying the following properties:
\begin{itemize}
\item[i)] $[\Q_p(\alpha):\Q_p]>[\Q_p(s_{n-1}):\Q_p]$.
\item[ii)] $\overline v_E(X-\alpha)>\overline v_E(X-s_{n-1})$.
\item[iii)] the positive integer $[\Q_p(\alpha):\Q_p]-[\Q_p(s_{n-1}):\Q_p]$  is minimal.
\end{itemize}
Note that since $\N$ is well-ordered, condition iii) can be satisfied (that is, among the $\alpha\in\oQp$ satisfying i) and ii), we can find one which also satisfies iii) ). Since $\overline v_E(X-s_{n-1})=v(t_m-s_{n-1})$ for all $m$ sufficiently large, for such $m$'s we also have $\overline v_E(X-s_{n-1})<\overline v_E(X-t_m)$. Moreover, without loss of generality, we may also assume that $[\Q_p(t_m):\Q_p]>[\Q_p(s_{n-1}):\Q_p]$ since $\{[\Q_p(t_m):\Q_p]\}_{m\in\N}$ is unbounded.  This shows that the set $B_n$ is non-empty. Let $M_E(X,B_n)=\{\overline v_E(X-\alpha)\mid\alpha\in B_n\}$, which is a subset of $\Q$. Let $\delta_n=\sup M_E(X,B_n)$. Since each element of $B_n$ has the same degree over $\Q_p$, it follows that $B_n$ is contained in a finite extension $K$ of $\Q_p$. In particular, it follows as above that $M_E(X,B_n)$ is upper bounded. Let $\delta_n=\sup M_E(X,B_n)\in\R$. Next, we show that $M_E(X,B_n)$ contains its upper bound (which is, in particular, a rational number). Suppose otherwise: then there exists a sequence $\{\alpha_k\}_{k\in\N}\subset B_n$ such that $\overline v_E(X-\alpha_k)\nearrow\delta_n$. In particular, $\{\alpha_k\}_{k\in\N}$ would be a pseudo-convergent sequence of a finite extension of $\Q_p$ which is not Cauchy, which is impossible. Let $s_n\in B_n$ be such that $\overline v_E(X-s_n)=\delta_n$. Note that $v_p(s_n-s_{n-1})=\overline v_E(s_n-X+X-s_{n-1})=\overline v_E(X-s_{n-1})=\delta_{n-1}$. 

We now show that $(s_{n-1},s_n)$ is distinguished.  Clearly, $[\Q_p(s_{n-1}):\Q_p]<[\Q_p(s_n):\Q_p]$. 

Let $c\in\oQp$ be such that $[\Q_p(c):\Q_p]<[\Q_p(s_n):\Q_p]$. 

If $[\Q_p(c):\Q_p]>[\Q_p(s_{n-1}):\Q_p]$, then by the minimality of the degree of $s_n$, we have $\overline v_E(X-c)\leq \overline v_E(X-s_{n-1})=\delta_{n-1}$, so
$$v_p(s_n-c)=\overline v_E(s_n-X+X-c)=\overline v_E(X-c)\leq \delta_{n-1}=v_p(s_n-s_{n-1})$$
Suppose now that $[\Q_p(c):\Q_p]=[\Q_p(s_{n-1}):\Q_p]$.

If $\overline v_E(X-c)\leq \overline v_E(X-s_{n-2})$, then $\overline v_E(X-c)<\delta_{n-1}$.

If $\overline v_E(X-c)> \overline v_E(X-s_{n-2})$ then $c\in B_{n-1}$, so $ \overline v_E(X-c)\leq \delta_{n-1}=\overline v_E(X-s_{n-1})$.

In either case,
$$v_p(s_n-c)=\overline v_E(s_n-X+X-c)=\overline v_E(X-c)\leq\delta_{n-1}=v_p(s_n-s_{n-1}).$$
Note that in particular, for $n=1$ we have that $(s_0,s_1)$ is distinguished, since condition iii) of Definition \ref{stacked} is empty, since $s_0\in\Q_p$.

Suppose now that $n\geq 2$ and assume by induction that $(s_{n-2},s_{n-1})$ is distinguished. Let $c\in\oQp$ be such that $[\Q_p(c):\Q_p]<[\Q_p(s_{n-1}):\Q_p]$. Since $(s_{n-2},s_{n-1})$ is distinguished, we have $v_p(s_{n-1}-c)\leq v_p(s_{n-1}-s_{n-2})=\delta_{n-2}<\delta_{n-1}$. Hence, 
$$v_p(s_n-c)=v_p(s_n-s_{n-1}+s_{n-1}-c)=v_p(s_{n-1}-c)<v_p(s_n-s_{n-1}).$$

We now show that $E'=\{s_n\}_{n\in\N}$ is equivalent to $E=\{t_n\}_{n\in\N}$. Let $\{\lambda_n\}_{n\in\N}$ and  $\{\delta_n\}_{n\in\N}$ be the gauges of $E$ and $E'$, respectively. We need to show that, for each $k\in\N$, there exists $n\in \N$ such that $\lambda_k\leq\delta_{n}$. Since $E'$ is unbounded, there exists $n\in\N$ such that $[\Q_p(t_k):\Q_p]<[\Q_p(s_n):\Q_p]$. Since $(s_n,\delta_n)$ is a minimal pair,  we have $v_p(s_n-t_k)<\delta_n$, so that 
\begin{equation}\label{lambdak}
\lambda_k=\overline v_E(X-t_k)=\overline v_E(X-s_n+s_n-t_k)<\overline v_E(X-s_{n})=\delta_{n}
\end{equation}

Conversely, let $n\in\N$. We need to show that there exists $k\in \N$ such that $\delta_n\leq\lambda_k$. For all $m$ sufficiently large we have
$$\overline v_E(X-s_n)=v_p(t_m-s_n)=\overline v_E(t_m-X+X-s_n)$$
and since $n$ is fixed and $\overline v_E(t_m-X)=\lambda_m$ is strictly increasing, it follows that $\overline v_E(t_m-X)=\lambda_m>\overline v_E(X-s_n)$
for all such $m$'s. 

Hence,  $\Br(E)=\Br(E')$.

Finally, we need to show that, if $k\in\N$, then there exist $n_0,m_0\in\N$ such that for each $n\geq n_0$ and $m\geq m_0$, we have $v_p(t_n-s_m)>\lambda_k$. Let $n_0$ be the smallest integer such that $[\Q_p(t_k):\Q_p]<[\Q_p(s_{n_0}):\Q_p]$. As in \eqref{lambdak} above, $\lambda_k<v_E(X-s_{n_0})=\delta_{n_0}$.  Let now $m>k$ and $n\geq n_0$. Then,
$$v(t_m-s_n)=\overline v_E(t_m-X+X-s_n)>\lambda_k$$
since $\overline v_E(t_m-X)=\lambda_m>\lambda_k$ and $\overline v_E(X-s_n)\geq \overline v_E(X-s_{n_0})=\delta_{n_0}>\lambda_k$. Hence, $E$ and $E'$ are equivalent.

By \cite[Proposition 5.3]{PS2}, $\oZp_{E}=\oZp_{E'}$, so, in particular, $\Z_{p,E}=\Z_{p,E'}$. The final claim follows by Proposition \ref{ratpcvtransc}.
\end{proof}

The following proposition describes the value group and the residue field of a residually algebraic torsion extension $W$ of $\Z_p$ to $\Q_p(X)$. By Theorem \ref{Epcvtranscdistinguished}, $W$  is equal to $\Z_{p,E}$, for some strongly stacked sequence $E\subset\oQp$.  We keep the notation of   Remark \ref{pcvdist}.

\begin{Prop}\label{residuevaluerat}
Let $E=\{s_n\}_{n\in\N}\subset\oQp$ be a stacked sequence and $W=\Z_{p,E}$.  
Then we have 
$$\bigcup_{n\in\N} \Gamma_n=\Gamma_w,\;\;\bigcup_{n\in\N} k_n=k_w.$$
\end{Prop}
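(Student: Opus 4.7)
My plan is to establish both equalities by verifying each inclusion separately, relying throughout on the transcendental-type identity $\overline v_E(f) = v_p(f(s_m))$ (valid for $f \in \oQp[X]$ and all sufficiently large $m$, by Lemma \ref{distinguishedpcv}) together with the minimal pair condition $\delta_n > \delta_{\Q_p}(s_n)$ coming from iii) of Definition \ref{stacked}. The crucial preliminary step, which I would prove first, is the following ``stability lemma'': for every $f \in \Q_p[X]$ with $\deg f < d_n$ and every $m \geq n$, one has $v_p(f(s_m)) = v_p(f(s_n))$, and the quotient $f(s_m)/f(s_n)$ lies in $1+$(maximal ideal of $\oZp$). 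The proof factors $f(X) = a\prod_i(X - \alpha_i)$ in $\oQp[X]$, notes that each $\alpha_i$ is algebraic over $\Q_p$ of degree $\leq \deg f < d_n$, and invokes iii) to get $v_p(s_n - \alpha_i) < \delta_n = v_p(s_m - s_n)$; the ultrametric then gives $(s_m - \alpha_i)/(s_n - \alpha_i) = 1 + u_i$ with $v_p(u_i) = \delta_n - v_p(s_n - \alpha_i) > 0$, and multiplying over $i$ yields both assertions at once.

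For the inclusions $\bigcup \Gamma_n \subseteq \Gamma_w$ and $\bigcup k_n \subseteq k_w$, I write an arbitrary $\alpha \in \Q_p(s_n)^{*} = \Q_p[s_n]$ as $\alpha = f(s_n)$ with $\deg f < d_n$; the stability lemma combined with transcendental type gives $v_E(f) = v_p(f(s_n)) = v_p(\alpha)$, placing $v_p(\alpha)$ in $\Gamma_w$. When $v_p(\alpha) = 0$ with residue $\beta \in k_n$, the multiplicative part of the lemma yields $v_p(f(s_m) - f(s_n)) > 0$ for $m > n$, and by transcendental type (now applied to $f - f(s_n) \in \oQp[X]$) this forces $\overline v_E(f - f(s_n)) > 0$; consequently the image of $f \in W$ in $k_w \hookrightarrow k_{\oZp_E} = \overline{\F_p}$ coincides with the image of $f(s_n)$, namely $\beta$, so $\beta \in k_w$. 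For the reverse inclusions, given $\phi = f/g \in \Q_p(X)^{*}$, I choose $n$ large enough that $\deg f, \deg g < d_n$ (possible since i) forces $d_n \to \infty$) and apply the stability lemma to both $f$ and $g$ to conclude $v_E(\phi) = v_p(\phi(s_n)) \in \Gamma_n$. Running the analogous product computation on the numerator $g(s_n)f(X) - g(X)f(s_n)$ of $\phi - \phi(s_n)$ gives $\overline v_E(\phi - \phi(s_n)) > 0$, so the residue of $\phi$ in $k_w$ is the residue of $\phi(s_n) \in O_{s_n}$ and hence lies in $k_n$.

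The step I expect to give the most trouble is the residue identification: one must confirm that the residue of $f \in W$ computed in $k_w$, transported via the natural inclusion $k_w \subseteq k_{\oZp_E} = \overline{\F_p}$, coincides with the residue of $f(s_n)$ viewed inside $k_n \subseteq \overline{\F_p}$. The whole argument reduces to the strict positivity $\overline v_E(f - f(s_n)) > 0$, which is obtained by upgrading the minimal pair inequality $v_p(s_n - \alpha_i) < \delta_n$ into the uniform positive excess $\delta_n - \max_i v_p(s_n - \alpha_i) > 0$; it is precisely transcendental type over $\oQp$, rather than only over $\Q_p$, that then transfers this positive lower bound from the evaluations $f(s_m) - f(s_n)$ to $\overline v_E(f - f(s_n))$ itself.
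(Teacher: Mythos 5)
Your proposal is correct and follows essentially the same route as the paper's proof: both rest on factoring a polynomial $f\in\Q_p[X]$ of degree less than $[\Q_p(s_n):\Q_p]$ into linear factors over $\oQp$, using the minimal pair condition to get $v_p(s_n-\alpha_i)<\delta_n$ for each root, and then transferring values and residues between $f(s_n)$ and $f(X)$ via transcendental type and the product of factors $(X-\alpha_i)/(s_n-\alpha_i)$, each congruent to $1$. Your ``stability lemma'' packaging and the cross-multiplication step for $\phi-\phi(s_n)$ are only cosmetic variants of the computations \eqref{vEfvfsn} and \eqref{f(X)f(sn)} in the paper.
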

\begin{proof}
Let  $w=v_E$ be the valuation associated  to $\Z_{p,E}$ and $\overline v_E$ the valuation associated to $\oZp_E$.

Since $E$ is of transcendental type, for each $f\in \Q_p[X]$, $v_E(f)=v(f(s_n))$ for all $n$ sufficiently large (see \S \ref{pcv}). It follows that for each $\phi\in\Q_p(X)$, $\phi=f/g$, for some $f,g\in\Q_p[X]$, $v_E(\phi)=v_E(f)-v_E(g)$ is in $\Gamma_n$ for all $n$ sufficiently large. Hence, $\Gamma_w\subseteq \bigcup_n \Gamma_n$. Conversely, let $n\in\N$ and $f\in\Q_p[X]$ be of degree smaller than $[\Q_p(s_n):\Q_p]$. Then, each root $\alpha_i$ of $f(X)$ in $\oQp$ has degree  smaller than $[\Q_p(s_n):\Q_p]$ and so, since $(s_n,\delta_n)$ is a minimal pair, we have
\begin{equation}\label{smaller degree smaller value}
v_p(s_n-\alpha_i)<\delta_n
\end{equation}
which implies that
\begin{equation}\label{vEX-alpha}
\overline v_E(X-\alpha_i)=\overline v_E(X-s_n+s_n-\alpha_i)=v_p(s_n-\alpha_i)
\end{equation}
and so
\begin{equation}\label{vEfvfsn}
v_E(f(X))=\sum_i \overline v_E(X-\alpha_i)=\sum_i v_p(s_n-\alpha_i)=v_p(f(s_n))
\end{equation} 
which shows that $\Gamma_n\subseteq \Gamma_w$. Note that $\overline v_E(X-\alpha_i)=v(s_m-\alpha_i)$ for each $m\geq n$ and so $v_E(f(X))=v(f(s_m))$ for each $m\geq n$.

Let now $n\in\N$ and $\overline{c}=\overline{f(s_n)}\in k_n$, for some $f(s_n)\in O_n^*$ where $f\in \Q_p[X]$ has degree strictly smaller than $[\Q_p(s_n):\Q_p]$. In particular, $\overline{c}\not=0$. As in \eqref{vEfvfsn}, $v_E(f(X))=v(f(s_m))=0$ for each $m\geq n$. Let $\alpha_i$ be a root of $f(X)$ in $\oQp$. Then by \eqref{vEX-alpha}, $\overline v_E(X-\alpha_i)=v_p(s_n-\alpha_i)=v_p(d_i)$ for some $d_i\in\oQp$. Then
$$\overline v_E\left(\frac{(X-\alpha_i)/d_i}{(s_n-\alpha_i)/d_i}-1\right)=\overline v_E\left(\frac{X-s_n}{s_n-\alpha_i}\right)=\delta_n-v_p(s_n-\alpha_i)>0$$
where the last inequality holds by \eqref{smaller degree smaller value}. Therefore, $(X-\alpha_i)/d_i$ and $(s_n-\alpha_i)/d_i$ coincide over the residue field of $W$. In particular,
\begin{equation}\label{f(X)f(sn)}
\frac{f(X)}{f(s_n)}=\prod_i\frac{(X-\alpha_i)}{(s_n-\alpha_i)}=\prod_i\frac{(X-\alpha_i)/d_i}{(s_n-\alpha_i)/d_i}
\end{equation}
and since each factor of the last product has residue $\overline 1$ in $W$, it follows that $f(X)$ and $f(s_n)$ coincide over the  residue field of $W$ (which contains both $f(X)$ and $f(s_n)$). Since $f\in\Z_{p,E}=W$, this shows that $k_n$ is contained in the residue field $k_w$ of $W$.

Conversely, let $\phi=f/g\in W\subset\Q_p(X)$, for some $f,g\in\Q_p[X]$. Let $\alpha_i,\beta_j$ be the roots in $\oQp$ of $f$ and $g$, respectively. There exists $n\in\N$ such that $[\Q_p(\alpha_i):\Q_p]<[\Q_p(s_n):\Q_p]$ and $[\Q_p(\beta_j):\Q_p]<[\Q_p(s_n):\Q_p]$ for all $i$ and $j$. Hence, as in \eqref{vEX-alpha} we have 
$$\overline v_E(X-\alpha_i)=v_p(s_n-\alpha_i),\;\;\overline v_E(X-\beta_j)=v(s_n-\beta_j),\forall i,j$$
which again as in \eqref{vEfvfsn} shows that
$$v_E(\phi(X))=v(\phi(s_n))$$
Moreover, this last equation holds if we replace $s_n$ by $s_m$, for all $m\geq n$. If $v_E(\phi(X))=0$, then as in \eqref{f(X)f(sn)} one can show that $\phi(X)$ and $\phi(s_n)$ coincide over the residue field of $W$, so that $k_w\subseteq k_n$.
\end{proof}
The following corollary gives a further characterization of the residue field and the value group of a residually algebraic torsion extension $W$ of $\Z_p$: either the residue field of $W$ is an infinite algebraic extension of $\F_p$, or the value group $\Gamma_w$ is non-discrete.
\begin{Cor}\label{eof infinite}
Let $W$ be a residually algebraic torsion extension of $\Z_p$ to $\Q_p(X)$ and let $e=e(W|\Z_p)$ and $f=f(W|\Z_p)$ be the ramification index and the residue field degree, respectively.  Then $e\cdot f=\infty$.
\end{Cor}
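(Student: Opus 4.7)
The plan is to combine Theorem \ref{Epcvtranscdistinguished} with Proposition \ref{residuevaluerat} and the observation already made in Remark \ref{pcvdist} that, for any stacked sequence, the degrees $d_n=e_nf_n$ tend to infinity, so at least one of the factors $e_n$ or $f_n$ must be unbounded.

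More precisely, first apply Theorem \ref{Epcvtranscdistinguished} to obtain a strongly stacked sequence $E=\{s_n\}_{n\in\N}\subset\oQp$ with $W=\Z_{p,E}$. For each $n\in\N$ let $e_n, f_n$ be the ramification index and residue degree of $\oZp\cap\Q_p(s_n)$ over $\Z_p$, so that $e_nf_n=[\Q_p(s_n):\Q_p]=d_n$. Since $E$ is stacked, condition i) of Definition \ref{stacked} gives $d_n<d_{n+1}$ for every $n$, hence $\{d_n\}_{n\in\N}$ is unbounded. Consequently either $\{e_n\}_{n\in\N}$ or $\{f_n\}_{n\in\N}$ is unbounded.

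Now invoke Proposition \ref{residuevaluerat} to get $\Gamma_w=\bigcup_{n\in\N}\Gamma_n$ and $k_w=\bigcup_{n\in\N}k_n$. Since the value group $\Gamma_n$ of $\oZp\cap\Q_p(s_n)$ satisfies $[\Gamma_n:\Z]=e_n$, and $\Gamma_n\subseteq\Gamma_w$, we have $e_n\mid e$ for every $n$; similarly, $k_n\subseteq k_w$ and $[k_n:\F_p]=f_n$ yield $f_n\mid f$. If $e$ were finite, then $\Gamma_w=\frac{1}{e}\Z$ and $\{e_n\}_{n\in\N}$ would be bounded; analogously, if $f$ were finite, then $\{f_n\}_{n\in\N}$ would be bounded. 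Since at least one of these sequences is unbounded, at least one of $e$ or $f$ must be infinite, giving $e\cdot f=\infty$.

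There is no real obstacle here: once one has Theorem \ref{Epcvtranscdistinguished} and Proposition \ref{residuevaluerat} in hand, the corollary is an immediate bookkeeping argument. The only subtle point is remembering that the strict increase of $d_n$ is built into the stacked hypothesis (via Remark \ref{pcvdist}), which is what forces $e_n$ or $f_n$ to blow up.
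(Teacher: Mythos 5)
Your proof is correct and follows the paper's own argument essentially verbatim: Theorem \ref{Epcvtranscdistinguished} to realize $W=\Z_{p,E}$ for a stacked sequence, the unboundedness of $\{e_n\}$ or $\{f_n\}$ noted in Remark \ref{pcvdist}, and Proposition \ref{residuevaluerat} identifying $\Gamma_w=\bigcup_n\Gamma_n$ and $k_w=\bigcup_n k_n$. The extra bookkeeping with $e_n\mid e$ and $f_n\mid f$ just spells out the final step the paper leaves implicit.
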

\begin{proof}
By Theorem \ref{Epcvtranscdistinguished}, there exists a stacked sequence $E=\{s_n\}_{n\in\N}\subset\oQp$ such that $W=\Z_{p,E}$. By Proposition \ref{residuevaluerat}, $\Gamma_w=\bigcup_n \Gamma_n$ and $k_w=\bigcup_n k_n$. Remark \ref{pcvdist} shows that either the sequence $\{e_n=[\Gamma_n:\Z]\}_{n\in\N}$ or $\{f_n=[k_n:\F_p]\}_{n\in\N}$ is unbounded, therefore, either $e=e(W|\Z_p)$ or $f=f(W|\Z_p)$ is infinite.
\end{proof}

The following proposition is analogous to \cite[Proposition 2.3]{APZclosed}. It shows that the sequence of ramification indexes,   residue field degrees and gauges attached to a residually algebraic torsion extension $W$ of $\Z_p$ do not depend on the strongly stacked  sequence $E\subset \oQp$ such that $W=\Z_{p,E}$ (Theorem \ref{Epcvtranscdistinguished}).

\begin{Prop}
Let $W\subset\Q_p(X)$ be a residually algebraic torsion extension of $\Z_p$. Let $E=\{s_n\}_{n\in\N},E'=\{t_n\}_{n\in\N}\subset\oQp$ be strongly  stacked sequences with gauges $\{\delta_n\}_{n\in\N},\{\delta'_n\}_{n\in\N}$, respectively, such that $W=\Z_{p,E}=\Z_{p,E'}$. Then, for each $n\in\N$ we have:
\begin{itemize}
\item[i)] $[\Q_p(s_n):\Q_p]=[\Q_p(t_n):\Q_p]$ and $\delta_n=\delta_n'$.
\item[ii)] $e_{s_n}=e_{t_n}$ and $f_{s_n}=f_{t_n}$.
\end{itemize}
\end{Prop}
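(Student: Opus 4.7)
The plan is to show (i) by extracting the sequences $\{d_n = [\Q_p(s_n):\Q_p]\}$ and $\{\delta_n\}$ intrinsically from the valuation $w$ associated to $W$, and then to deduce (ii) from (i) by applying Theorem \ref{propertydistinguished} to the pair $(s_n,t_n)$.

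For (i), I would introduce the function
$$\mathcal{D}(m) := \sup\{\overline v_E(X-\alpha) : \alpha\in\oQp,\ [\Q_p(\alpha):\Q_p]\leq m\},\qquad m\in\N.$$
A short preliminary argument, based on the fact that $E$ and $E'$ are equivalent pseudo-convergent sequences (so that $v_p(s_n-t_m)$ can be made arbitrarily large) shows that $\overline v_E(X-\alpha)=\overline v_{E'}(X-\alpha)$ for every $\alpha\in\oQp$, so $\mathcal{D}(m)$ depends only on $W$. The decisive input is then condition ii) in the definition of strongly stacked sequence: because $(s_n,s_{n+1})$ is a distinguished pair, every $\alpha\in\oQp$ of degree $<d_{n+1}$ satisfies $v_p(s_{n+1}-\alpha)\leq\delta_n<\delta_{n+1}=\overline v_E(X-s_{n+1})$, whence
$$\overline v_E(X-\alpha)=\overline v_E\bigl((X-s_{n+1})+(s_{n+1}-\alpha)\bigr)=v_p(s_{n+1}-\alpha)\leq\delta_n.$$
Combined with $\overline v_E(X-s_n)=\delta_n$, this forces $\mathcal{D}$ to be a step function on $\N$ equal to $\delta_n$ throughout $d_n\leq m<d_{n+1}$ and jumping to $\delta_{n+1}$ at $m=d_{n+1}$. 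Hence the jump locations and jump values recover $\{d_n\}$ and $\{\delta_n\}$ from $W$, proving (i).

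For (ii), the case $n=0$ is trivial since $s_0,t_0\in\Q_p$. For $n\geq 1$, Remarks \ref{properties distinguised pair} applied to the distinguished pairs $(s_{n-1},s_n)$ and $(t_{n-1},t_n)$ yield $\delta(s_n)=\delta_{n-1}$ and $\delta(t_n)=\delta'_{n-1}=\delta_{n-1}$ by (i). Since $\overline v_E(X-s_n)=\delta_n=\overline v_E(X-t_n)$, the ultrametric inequality gives
$$v_p(s_n-t_n)=\overline v_E\bigl((s_n-X)+(X-t_n)\bigr)\geq\delta_n>\delta_{n-1}=\delta(t_n),$$
and symmetrically $v_p(t_n-s_n)>\delta(s_n)$. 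Theorem \ref{propertydistinguished} applied in both directions then forces $\Gamma_{s_n}=\Gamma_{t_n}$ and $k_{s_n}=k_{t_n}$, i.e.\ $e_{s_n}=e_{t_n}$ and $f_{s_n}=f_{t_n}$. I expect the main technical hurdle to be the verification in (i) that $\mathcal{D}(m)$ is attained and takes exactly the values $\delta_n$ with jumps exactly at $m=d_n$; this requires carefully unwinding the distinguished-pair condition, after which (ii) is an essentially automatic consequence of Theorem \ref{propertydistinguished}.
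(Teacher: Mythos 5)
Your proposal is essentially correct, but it follows a genuinely different route from the paper's. For (i), the paper argues by induction, comparing the two sequences term by term: it shows $w(X-s_0)=w(X-t_0)$, then, assuming equality of degrees and gauges up to index $n$, uses condition ii) of Definition \ref{stacked} to show that $(s_n,t_{n+1})$ is a distinguished pair and deduces $[\Q_p(s_{n+1}):\Q_p]=[\Q_p(t_{n+1}):\Q_p]$ and $\delta_{n+1}=\delta'_{n+1}$ by a symmetry argument. You instead package the whole of (i) into the $W$-intrinsic step function $\mathcal{D}(m)$, and your verification that $\mathcal{D}(m)=\delta_n$ for $d_n\leq m<d_{n+1}$ is correct and complete: the upper bound is exactly condition ii) for the pair $(s_n,s_{n+1})$ plus the ultrametric equality, and the value is attained at $s_n$ because $X$ is a pseudo-limit of $E$. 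For (ii), the paper redoes the polynomial-evaluation computations of Proposition \ref{residuevaluerat} (evaluating $f$ of small degree at $s_n$ and $t_n$ and comparing values and residues), whereas you apply Theorem \ref{propertydistinguished} in both directions to the single pair $(s_n,t_n)$, using $v_p(s_n-t_n)\geq\delta_n>\delta_{n-1}=\delta(s_n)=\delta(t_n)$ via Remarks \ref{properties distinguised pair}; this is shorter and even re-yields the mutual divisibility of the degrees. Your approach buys a cleaner conceptual statement (the invariants are read off directly from the valuation), while the paper's induction produces along the way the auxiliary fact that $(s_n,t_{n+1})$ is distinguished and identifies residues concretely.

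One imprecision should be repaired, at the same spot where the paper inserts a terse normalization. The hypothesis $\Z_{p,E}=\Z_{p,E'}$ does \emph{not} imply $\oZp_E=\oZp_{E'}$: the two extensions of $W$ to $\oQp(X)$ are only conjugate under $\Gal(\oQp/\Q_p)$, so $E$ and $E'$ need not be equivalent, and your preliminary claim $\overline v_E(X-\alpha)=\overline v_{E'}(X-\alpha)$ for every $\alpha\in\oQp$ can fail as stated (take $E$ Cauchy with limit $\alpha_0\in\C_p$ and $E'=\sigma(E)$ with $\sigma(\alpha_0)\neq\alpha_0$). You must first replace $E'$ by $\sigma(E')$ for a suitable $\sigma$ — harmless, since conjugation preserves degrees, gauges, ramification indices and residue degrees — which is exactly the paper's ``without loss of generality $\oZp_E=\oZp_{E'}$''; alternatively, observe that your supremum over all $\alpha$ of degree at most $m$ is Galois-invariant, so $\mathcal{D}$ depends only on $W$ anyway. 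Also, your parenthetical ``$v_p(s_n-t_m)$ can be made arbitrarily large'' is wrong when the sequences are not Cauchy; equivalence only guarantees that these values eventually exceed each gauge value $\delta_k$, which is all the argument needs. Note too that in (ii) the equality $\overline v_E(X-t_n)=\delta_n$ likewise presupposes this normalization. With these small fixes the proof stands.
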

\begin{proof}
Without loss of generality, we may assume that in $\oQp(X)$ we have $\oZp_E=\oZp_{E'}$; we let $\overline W=\oZp_E=\oZp_{E'}$ and we denote by $w$ a valuation associated to $\overline W$. 


We prove i). We have $s_0,t_0\in\Q_p$. There exists $n\in\N$, $n\geq1$, such that $w(X-s_{n-1})\leq w(X-t_0)<w(X-s_n)$, otherwise $t_0$ would be a pseudo-limit of $E$, which is not possible. In particular, $v_p(s_n-t_0)=w(s_n-X+X-t_0)=w(X-t_0)\geq w(X-s_{n-1})=\delta_{n-1}$. If $n>1$ we have $[\Q_p(t_0):\Q_p]<[\Q_p(s_{n-1}):\Q_p]$ so by iii) of Definition \ref{stacked} we have that $v_p(s_n-t_0)=v_p(s_{n-1}-t_0)<v_p(s_n-s_{n-1})=\delta_{n-1}$ which   is impossible. Hence, $n=1$, so $v_p(s_1-t_0)=w(X-t_0)\geq w(X-s_0)$. Reversing the roles of $s_0,t_0$, we get the other inequality, so $w(X-s_0)=w(X-t_0)=\delta_0=\delta_0'$.

Let $n\in\N$ and suppose that for each $m\leq n$ we have $[\Q_p(s_m):\Q_p]=[\Q_p(t_m):\Q_p]$ and $\delta_m=\delta_m'$.

Since $[\Q_p(s_{n}):\Q_p]=[\Q_p(t_{n}):\Q_p]<[\Q_p(t_{n+1}):\Q_p]$, by ii) of Definition \ref{stacked} we have  $v_p(t_{n+1}-s_n)\leq v_p(t_{n+1}-t_n)=\delta_n'=\delta_n$. Now, 
$$v_p(t_{n+1}-s_n)=v_p(t_{n+1}-t_n+t_n-s_n)\geq\delta_n$$
since $v_p(t_n-s_n)=w(t_n-X+X-s_n)\geq \delta_n=\delta_n'$. This implies that $v_p(t_{n+1}-s_n)=\delta_n$ and so $(s_n,t_{n+1})$  is distinguished. Moreover, we have
$$v_p(t_{n+1}-s_{n+1})=w(t_{n+1}-X+X-s_{n+1})>\delta_n=\delta_n'=v_p(t_{n+1}-s_n).$$
Now, if $[\Q_p(s_{n+1}):\Q_p]< [\Q_p(t_{n+1}):\Q_p]$, then since $(s_n,t_{n+1})$ is distinguished, we would have $v_p(s_{n+1}-t_{n+1})\leq v_p(t_{n+1}-s_n)$, which is impossible. Hence, $[\Q_p(s_{n+1}):\Q_p]\geq [\Q_p(t_{n+1}):\Q_p]$. The other inequality is proved in a symmetrical way, so $[\Q_p(s_{n+1}):\Q_p]= [\Q_p(t_{n+1}):\Q_p]$.

Suppose now that $w(X-s_{n+1})<w(X-t_{n+1})$. Then $v_p(s_{n+2}-t_{n+1})=w(s_{n+2}-X+X-t_{n+1})>w(X-s_{n+1})=v_p(s_{n+2}-s_{n+1})$ which is not possible since $(s_{n+1},s_{n+2})$ is distinguished. Hence, $w(X-s_{n+1})\geq w(X-t_{n+1})$. The other inequality is proved similarly, so $\delta_{n+1}=\delta_{n+1}'$ as claimed.

We prove now ii). For each $n\in\N$, let $\Gamma_n,\Gamma'_n$ and $k_n,k_n'$ be the value groups and residue fields, respectively, of $\Q_p(s_n)$ and $\Q_p(t_n)$. Let $e_n=e_{s_n}$, $e_n'=e_{t_n}$, $f_n=f_{s_n}$, $f'_n=f_{t_n}$.

Clearly, $e_0=e_0'$ and $f_0=f_0'$, since $s_0,t_0\in\Q_p$. 

Let $n\geq 1$. If $f\in \Q_p[X]$ has  degree strictly smaller than $[\Q_p(s_n):\Q_p]=[\Q_p(t_n):\Q_p]$, then by \eqref{vEfvfsn} we have $w(f(X))=v_p(f(s_n))$ and also $w(f(X))=v_p(f(t_n))$, so $v_p(f(s_n))=v_p(f(t_n))$. This proves that $\Gamma_n=\Gamma_n'$ and so $e_n=e_n'$.

Suppose now that $f\in \Q_p[X]$ of degree strictly smaller than $[\Q_p(s_n):\Q_p]=[\Q_p(t_n):\Q_p]$ is such that $v_p(f(s_n))=v_p(f(t_n))=0$. In particular,  $w(f(X))=0$ by \eqref{vEfvfsn}. By \eqref{f(X)f(sn)}  and the analogous equation where $s_n$ is   replaced by $t_n$, we get that $f(s_n),f(t_n)$ have the same residue as $f(X)$, so in particular, $k_n=k_n'$. Therefore, $f_n=f_n'$.
\end{proof}

\subsection{Residually algebraic extensions of $\Z_p$ which are DVRs}

In this section we characterize DVRs of $\Q_p(X)$ extending $\Z_p$ such that the residue field extension is algebraic, necessarily of infinite degree by Corollary \ref{eof infinite}; this fact has already been noted in a different way in \cite[p. 4217]{PerTransc}. We will see in \S \ref{RatZpQX} that there is no such restriction on the residue field degree for DVRs of $\Q(X)$ which are residually algebraic extensions of $\Z_{(p)}$ (see Corollary \ref{DVRQ(X)rat}).

Given $\alpha\in\C_p$, we denote by $\O_{p,\alpha}$ the unique valuation domain of $\Q_p(\alpha)$ lying over $\Z_p$ (i.e., $\O_{p,\alpha}=\O_p\cap\Q_p(\alpha)$). We also set
$$\Z_{p,\alpha}=\{\phi\in\Q_p(X)\mid \phi(\alpha)\in \O_p\},$$
which is a valuation domain of $\Q_p(X)$ and coincides with the previous definition if $\alpha\in\oQp$. 

\begin{Prop}\label{ef transcendental elements Cp}
Let $\alpha\in\C_p$ be a transcendental element. Then there exists a Cauchy stacked sequence $E\subseteq\oQp$ converging to $\alpha$. Moreover, the valued fields $(\Q_p(X),\Z_{p,\alpha})$ and $(\Q_p(\alpha),\O_{p,\alpha})$ are isomorphic.  In particular, the ramification index $e(\Z_{p,\alpha}|\Z_p)$ is equal to $e_{\alpha}$, the residue field degree $f(\Z_{p,\alpha}|\Z_p)$ is equal to $f_{\alpha}$ and $e_{\alpha}\cdot f_{\alpha}=\infty$. 
\end{Prop}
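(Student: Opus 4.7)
I would handle the three assertions in order, each reducing to a short application of earlier results.

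For the first, that $\alpha$ is the limit of a Cauchy stacked sequence in $\oQp$, I would cite \cite[Proposition 2.2]{APZclosed}, which is precisely the existence statement for the original \emph{distinguished} sequences (distinguished sequences in the sense of \cite{APZclosed} are stacked with unbounded gauge, hence automatically Cauchy). Alternatively, one can mimic the construction in the proof of Theorem \ref{Epcvtranscdistinguished}, replacing $X$ by $\alpha$ and $\overline v_E(X-\cdot)$ by $v_p(\alpha-\cdot)$: pick $s_0\in\Q_p$ of maximal $v_p(\alpha-s_0)$, and having chosen $s_{n-1}$, let $s_n$ realize the supremum of $v_p(\alpha-c)$ among those $c\in\oQp$ of minimal degree strictly larger than $[\Q_p(s_{n-1}):\Q_p]$ with $v_p(\alpha-c)>v_p(\alpha-s_{n-1})$. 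Since $\alpha\notin\oQp$ and $\oQp$ is dense in $\C_p$, the gauge $\{\delta_n\}$ is unbounded, so $E=\{s_n\}$ is Cauchy with limit $\alpha$.

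For the second assertion I would invoke the universal property of the purely transcendental extension $\Q_p(\alpha)/\Q_p$: since $\alpha$ is transcendental over $\Q_p$, evaluation $X\mapsto\alpha$ extends uniquely to a $\Q_p$-algebra isomorphism $\sigma:\Q_p(X)\to\Q_p(\alpha)$. By the very definitions of $\Z_{p,\alpha}$ and $\O_{p,\alpha}=\O_p\cap\Q_p(\alpha)$,
$$\phi\in \Z_{p,\alpha}\iff \phi(\alpha)\in\O_p\iff \sigma(\phi)\in \O_p\cap\Q_p(\alpha)=\O_{p,\alpha},$$
so $\sigma$ carries $\Z_{p,\alpha}$ onto $\O_{p,\alpha}$ and is thus an isomorphism of valued fields.

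For the third assertion, since $\sigma$ is an isomorphism of valued fields fixing $(\Q_p,\Z_p)$, it identifies value groups and residue fields, yielding $e(\Z_{p,\alpha}|\Z_p)=e_{\alpha}$ and $f(\Z_{p,\alpha}|\Z_p)=f_{\alpha}$. Now $\Z_{p,\alpha}$ is a valuation domain of $\Q_p(X)$ extending $\Z_p$ which is residually algebraic (its residue field embeds into the residue field $\overline{\F_p}$ of $\O_p$) and torsion (its value group sits inside the value group $\Q$ of $\O_p$), so Corollary \ref{eof infinite} gives $e(\Z_{p,\alpha}|\Z_p)\cdot f(\Z_{p,\alpha}|\Z_p)=\infty$, i.e., $e_{\alpha}\cdot f_{\alpha}=\infty$. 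The only delicate point is the first step, namely verifying that the inductive construction really produces a Cauchy stacked sequence with limit $\alpha$; but this is automatic once one combines $\alpha\notin\oQp$ with the density of $\oQp$ in $\C_p$, and the remaining parts are a direct transport of structure across $\sigma$ together with Corollary \ref{eof infinite}.
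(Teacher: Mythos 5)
Your proposal is correct, and for the second and third assertions it coincides with the paper's proof: the evaluation map $X\mapsto\alpha$ gives the isomorphism of valued fields, which transports $e$ and $f$, and then $e_\alpha\cdot f_\alpha=\infty$ follows from Corollary \ref{eof infinite} once one notes $\Z_{p,\alpha}$ is a residually algebraic torsion extension of $\Z_p$ (a point the paper records just before Theorem \ref{Epcvtranscdistinguished} and which you verify directly). Where you diverge is the first assertion. You take it from \cite[Proposition 2.2]{APZclosed} (or, alternatively, by rerunning the construction of Theorem \ref{Epcvtranscdistinguished} with $\alpha$ in place of $X$), and the paper explicitly concedes that this citation suffices; but its own proof is deliberately different and internal to its machinery: it applies Theorem \ref{Epcvtranscdistinguished} to $W=\Z_{p,\alpha}$ to get a stacked $E$ with $\Z_{p,E}=\Z_{p,\alpha}$, lifts to $\oQp(X)$, uses conjugacy of the extensions $\oZp_E$ and $\oZp_\alpha$ under $\Gal(\oQp/\Q_p)$ together with \cite[Proposition 5.3]{PS2} to conclude that $E$ is Cauchy with limit $\sigma(\alpha)$, and then replaces $\alpha$ by its conjugate. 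The paper's route buys self-containedness and reuses the equivalence criterion for pseudo-convergent sequences; your route is shorter but either leans on the external result or, in the ``mimic the construction'' variant, leaves some verification to the reader: one still has to check the candidate sets are nonempty (density plus a degree-raising perturbation), that the resulting sequence is stacked (the distinguished-pair verification carries over verbatim), and that the gauge is unbounded — your phrase ``automatic from density'' compresses the actual argument, namely that if the gauge were bounded by $\delta$, an algebraic $c$ with $v_p(\alpha-c)>\delta$ would eventually have degree smaller than $[\Q_p(s_n):\Q_p]$ and then $v_p(s_n-c)=\delta_n$ would contradict $(s_n,\delta_n)$ being a minimal pair. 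None of this is a gap in substance, only in the level of detail.
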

Note that the last condition implies that either $e_{\alpha}$ or $f_{\alpha}$ is infinite. It can happen that exactly one of these two quantities is finite (see Theorem \ref{prescribed residue field and value group}).
\begin{proof}
The proof of the first claim follows also by \cite[Proposition 2.2]{APZclosed}, but we give here a different proof based on the previous results. 


By Theorem \ref{Epcvtranscdistinguished}, there exists a stacked sequence $E\subset\oQp$ such that $\Z_{p,\alpha}=\Z_{p,E}$. Since the valuation domains $\oZp_{E},\oZp_{\alpha}\subset\oQp(X)$ contracts down to $\Q_p(X)$ to the same valuation domain, there exists $\sigma\in\Gal(\oQp/\Q_p)$ such that $\sigma(\oZp_{\alpha})=\oZp_{\sigma(\alpha)}=\oZp_{E}$. By \cite[Proposition 5.3]{PS2}, $E$ is then a Cauchy sequence converging to $\sigma(\alpha)$. Since $\Z_{p,\alpha}=\Z_{p,\sigma(\alpha)}$, without loss of generality we may assume that $E$ converges to $\alpha$. 

Since $\alpha$ is transcendental over $\Q_p$, the evaluation homomorphism $\text{ev}_{\alpha}:\Q_p(X)\to\Q_p(\alpha)$, $\phi(X)\mapsto\phi(\alpha)$, is an isomorphism. It is easy to see that $\text{ev}_{\alpha}(\Z_{p,\alpha})=\O_{p,\alpha}$. Hence, $\Z_{p,\alpha}$ and $\O_{p,\alpha}$ have the same ramification indexes and residue field degrees over $\Z_p$. 

Finally, the last claim follows by Corollary \ref{eof infinite}.
\end{proof}
\begin{Rem}\label{eafaoQp}
By Proposition \ref{ef transcendental elements Cp}, we may conclude that in general,
\begin{equation*}
\text{for }\alpha\in\C_p, \text{ we have }e_{\alpha}\cdot f_{\alpha}<\infty \text{ if and only if }\alpha\in\oQp.
\end{equation*}
\end{Rem}

The next lemma may be well-known, but lacking a reference we give a short proof.
\begin{Lem}\label{ramification compositum}
Let $p\in\Z$ be a prime, $K_1,K_2$ finite extensions of $\Q_p$ and $L=K_1K_2$ the compositum. Let $e_1$ be the ramification index of $K_1$ over $\Q_p$ and $e$ the ramification index of $L$ over $K_2$. Then $e\leq e_1$.
\end{Lem}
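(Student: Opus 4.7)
The plan is to reduce to the totally ramified setting by passing to maximal unramified subextensions on both sides. First, I would introduce $K_2'$, the maximal unramified subextension of $L/K_2$, so that $K_2'/K_2$ is unramified of degree $f(L/K_2)$ and $L/K_2'$ is totally ramified of degree $e$. Symmetrically, let $K_1'$ denote the maximal unramified subextension of $K_1/\Q_p$; then $K_1/K_1'$ is totally ramified of degree $e_1$, and one can write $K_1 = K_1'(\pi)$ where $\pi$ is a uniformizer of $K_1$ satisfying an Eisenstein polynomial of degree $e_1$ over $K_1'$.

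The key step will be to show $K_1' \subseteq K_2'$. For this, I would use two standard facts from local field theory: unramified extensions are stable under base change, and they are transitive. Applying these, $K_1'/\Q_p$ unramified implies $K_1' K_2'/K_2'$ is unramified, and combined with $K_2'/K_2$ unramified, we get that $K_1' K_2'/K_2$ is unramified. Since $K_1' K_2'$ sits inside $L$ (because $K_1' \subseteq K_1 \subseteq L$ and $K_2' \subseteq L$), the maximality of $K_2'$ as the unramified subextension of $L/K_2$ forces $K_1' K_2' \subseteq K_2'$, i.e.\ $K_1' \subseteq K_2'$.

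Finally, I would observe that $L = K_1 K_2 \subseteq K_1 K_2' \subseteq L$, hence $L = K_1 K_2'$. Using $K_1' \subseteq K_2'$ and $K_1 = K_1'(\pi)$, this gives $L = K_2'(\pi)$. Since $\pi$ satisfies a polynomial of degree $e_1$ over $K_1' \subseteq K_2'$, we conclude
\[
e = [L : K_2'] = [K_2'(\pi):K_2'] \leq e_1,
\]
which is the desired inequality. I do not anticipate any serious obstacle: the only mildly technical ingredients are the base-change stability and transitivity of unramified extensions of complete discretely valued fields, both of which are standard.
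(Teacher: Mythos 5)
Your proof is correct, but it follows a genuinely different route from the paper. You stay inside $L$ and argue via inertia fields: you split off the maximal unramified subextension $K_2'$ of $L/K_2$ and $K_1'$ of $K_1/\Q_p$, show $K_1'\subseteq K_2'$ using stability of unramifiedness under composita/base change together with the maximality of $K_2'$, and then exploit the structure theorem for totally ramified extensions, $K_1=K_1'(\pi)$ with $\pi$ a uniformizer whose minimal polynomial over $K_1'$ is Eisenstein of degree $e_1$, to get $L=K_1K_2'=K_2'(\pi)$ and hence $e=[L:K_2']=[K_2'(\pi):K_2']\leq e_1$. The paper instead passes to the normal closure $L'$ of $L$ over $\Q_p$, lets $I$ be the inertia group of the maximal ideal of $O_{L'}$ over $\Z_p$ and $G_i=\Gal(L'|K_i)$, writes $e=\#(I\cap G_2)/\#(I\cap G_1\cap G_2)$ and $e_1=\#I/\#(I\cap G_1)$, and concludes from the elementary group-theoretic inequality $[H_2:H_1\cap H_2]\leq[G:H_1]$. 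The trade-off: the paper's argument is essentially self-contained once the dictionary between ramification indices and inertia groups is granted, and it makes the purely combinatorial nature of the inequality transparent; yours is more concrete and avoids Galois closures, but it invokes several standard facts (existence and maximality of the inertia field, base-change stability and transitivity of unramified extensions, Eisenstein generation of totally ramified extensions), all of which are unproblematic here since the residue fields are finite, hence perfect. Both proofs are complete and yield exactly the stated bound $e\leq e_1$ (and neither gives divisibility, which indeed can fail outside the tame case, as the paper remarks).
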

\begin{proof}
If $K_1$ is a tame extension of $\Q_p$, then the ramification index of $L$ over $\Q_p$ is equal to lcm$\{e(K_1|\Q_p),e(K_2|\Q_p)\}$   (see for example \cite{ChabHal}), so $e$ divides $e_1$ and the claim is true. 

We give a self-contained proof which works in general. Let $L'$ be the normal closure of $L$ over $\Q_p$ and $I$ the inertia group of the maximal ideal $M_{L'}$  of $O_{L'}$ over $\Z_p$. Let $G_i$ be the Galois group $\Gal(L'|K_i)$, for $i=1,2$ and $G$ the Galois group $\Gal(L'|L)$. Since $L=K_1K_2$, we have $G=G_1\cap G_2$. The inertia group of $M_{L'}$ over $M_{K_1}$ is equal to $I\cap G_1$ and the inertia group of $M_{L'}$ over $M_L$ is equal to $I\cap G$. We have
$$e=\frac{e(L'|K_2)}{e(L'|L)}=\frac{\#(I\cap G_2)}{\#(I\cap G)},\;\;e_1=\frac{e(L'|\Q_p)}{e(L'|K_1)}=\frac{\#I}{\#(I\cap G_1)}$$
Note that $I\cap G=(I\cap G_1)\cap(I\cap G_2)$. Therefore, the claim follows by the following general fact for finite groups: given a finite group $G$ with two subgroups $H_1,H_2$, we have 
$$\frac{\#H_2}{\#(H_1\cap H_2)}=[H_2:H_1\cap H_2]\leq\frac{\#G}{\#H_1}=[G:H_1]$$
which follows immediately, since the map $h_2 H_1\cap H_2\mapsto h_2 H_1$ from the set $\{ h_2(H_1\cap H_2)\mid h_2\in H_2\}$ of left cosets of $H_1\cap H_2$ in $H_2$ to the set $\{gH_1\mid g\in G\}$ of left cosets of $H_1$ in $G$ is injective. 
\end{proof}

The following result is analogous to \cite[Theorem 2.5]{PerTransc}.

\begin{Thm}\label{DVRoverZp}
Let $W$ be a DVR of $\Q_p(X)$ which is a residually algebraic extension of $\Z_p$. Then there exists a transcendental element $\alpha\in \C_p$ such that $W=\Z_{p,\alpha}$. 
\end{Thm}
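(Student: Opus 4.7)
The plan is to apply Theorem \ref{Epcvtranscdistinguished} to produce a strongly stacked sequence $E=\{s_n\}_{n\in\N}\subset\oQp$ with $W=\Z_{p,E}$, and then to argue that $E$ must be a Cauchy sequence. Its limit $\alpha\in\C_p$ will be the desired transcendental element: because $E$ is of transcendental type it has no pseudo-limit in $\oQp$, so $\alpha\in\C_p\setminus\oQp$ is transcendental over $\Q_p$, and because $E$ is a Cauchy sequence converging to $\alpha$, the last paragraph of \S\ref{pcv} gives $\oZp_E=\oZp_{\alpha}$, which contracted to $\Q_p(X)$ yields $W=\Z_{p,E}=\Z_{p,\alpha}$.

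To show $E$ is Cauchy, I would first exploit that $W$ is a DVR: its value group is a nontrivial cyclic subgroup of $\Q$, say $\Gamma_w=\tfrac{1}{e}\Z$. By Proposition \ref{residuevaluerat}, $\Gamma_w=\bigcup_n\Gamma_n$ where $\Gamma_n=\tfrac{1}{e_n}\Z$ and $e_n\mid e_{n+1}$. Since the ascending sequence of cyclic subgroups of $\tfrac{1}{e}\Z$ must stabilize, there exists $N$ with $e_n=e$ and $\Gamma_n=\Gamma_w$ for all $n\geq N$. By Remark \ref{pcvdist}, the gauge $\{\delta_n\}$ of $E$ is strictly increasing in $\Q$; if I can show $\delta_n\in\Gamma_w$ for $n$ large, then the discreteness of $\tfrac{1}{e}\Z$ forces $\{\delta_n\}$ to be unbounded, hence cofinal in $\Q$, which is exactly the statement that $E$ is Cauchy.

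The hardest step is therefore to prove $\delta_n\in\Gamma_w$ for $n$ large. The natural strategy is to evaluate $v_E$ on the minimal polynomial $m_n(X)\in\Q_p[X]$ of $s_n$: on the one hand, since $m_n\in\Q_p[X]$ and $E$ is of transcendental type, $v_E(m_n)=v_p(m_n(s_k))\in\Gamma_k=\Gamma_w$ for $k$ large; on the other, factoring $m_n=\prod_{\sigma}(X-\sigma(s_n))$ over $\oQp$ and using $\overline v_E(X-s_n)=\delta_n$ gives
\[
v_E(m_n)=\delta_n+\sum_{\sigma\neq\mathrm{id}}\overline v_E(X-\sigma(s_n)).
\]
If every $\Q_p$-conjugate $\sigma(s_n)\neq s_n$ satisfies $v_p(s_n-\sigma(s_n))<\delta_n$ (equivalently $\omega(s_n)<\delta_n$), the ultrametric rule collapses each remaining summand to $v_p(s_n-\sigma(s_n))$ and the sum equals $v_p(m_n'(s_n))\in\Gamma_n=\Gamma_w$, so $\delta_n=v_E(m_n)-v_p(m_n'(s_n))\in\Gamma_w$. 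The subtle point is that since $\sigma(s_n)$ has the same degree as $s_n$, the minimal pair property of $(s_n,\delta_n)$ does not by itself exclude close conjugates. To handle this I would refine the inductive construction in Theorem \ref{Epcvtranscdistinguished}, using the finiteness of extensions of $\Q_p$ of bounded degree, to replace each $s_n$ by an equivalent element inside its $v_p$-ball whose Galois conjugates all lie at $v_p$-distance strictly less than $\delta_n$ from $s_n$.
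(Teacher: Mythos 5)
Your overall scaffolding is sound and matches the paper's: produce a stacked sequence $E=\{s_n\}_{n\in\N}\subset\oQp$ with $W=\Z_{p,E}$ via Theorem \ref{Epcvtranscdistinguished}, use Proposition \ref{residuevaluerat} and the finiteness of $e(W|\Z_p)$ to stabilize the value groups $\Gamma_n$, show the gauge $\{\delta_n\}$ is unbounded, and conclude that $E$ is Cauchy with transcendental limit $\alpha$ and $W=\Z_{p,\alpha}$. The gap is in the step you yourself flag as hardest: to collapse $v_E(m_n)=\delta_n+\sum_{\sigma\neq\mathrm{id}}\overline v_E(X-\sigma(s_n))$ into $\delta_n+v_p(m_n'(s_n))$ you need $\omega(s_n)<\delta_n$, and this does not follow from the minimal-pair property (conjugates of $s_n$ have the same degree as $s_n$, so condition iii) of Definition \ref{stacked} says nothing about them). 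Your proposed repair --- re-choose $s_n$ inside the ball $\{x\in\oQp\mid v_p(x-s_n)\geq\delta_n\}$ so that all its conjugates lie at distance $<\delta_n$ --- is not justified, and in general no such re-choice exists. Concretely, in $\overline{\Q_2}$ take $s=\sqrt2$ and any $\delta$ with $\tfrac12=\delta_{\Q_2}(\sqrt2)<\delta\leq\tfrac32=\omega(\sqrt2)$, so $(\sqrt2,\delta)$ is a minimal pair of exactly the kind occurring in a stacked sequence: every degree-$2$ element $x$ with $v_2(x-\sqrt2)\geq\delta$ satisfies $\omega(x)\geq\delta$. Indeed, if $x=c+d\sqrt2\in\Q_2(\sqrt2)$, the conditions force $v_2(d)=0$, so $v_2(x-\bar x)=v_2(2d\sqrt2)=\tfrac32\geq\delta$; and if $x\notin\Q_2(\sqrt2)$, then $\omega(x)<v_2(x-\sqrt2)$ would give $\Q_2(x)\subseteq\Q_2(\sqrt2)$ by Krasner's lemma, a contradiction, so again $\omega(x)\geq\delta$. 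Thus the refinement cannot be carried out ball-by-ball, and you give no argument that the balls produced by the construction for a DVR $W$ avoid such configurations (nor do you check that a replacement would keep the sequence stacked and equivalent to $E$).

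The fix is to ask for less: you do not need $\delta_n\in\Gamma_w$, only that the strictly increasing gauge has bounded denominators, and this is automatic without any control of conjugates. For $n$ large, $\delta_n=v_p(s_{n+1}-s_n)$ lies in the value group of the compositum $K_n=\Q_p(s_n,s_{n+1})$; since $\Q_p(s_n)$ and $\Q_p(s_{n+1})$ both have ramification index $e$ over $\Q_p$, Lemma \ref{ramification compositum} bounds $e(K_n|\Q_p)$ by $e^2$, so $d\delta_n\in\Z$ for a fixed $d$ and all large $n$. A strictly increasing sequence of rationals with bounded denominators is unbounded, hence $E$ is Cauchy, and the rest of your argument (transcendence of the limit by Lemma \ref{distinguishedpcv}, and $\Z_{p,E}=\Z_{p,\alpha}$ as in \S\ref{pcv}) goes through; this is exactly the route the paper takes.
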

\begin{proof}
Note that, by Corollary \ref{eof infinite}, the residue field of $W$ is an infinite algebraic extension of $\mathbb F_p$.

By Theorem \ref{Epcvtranscdistinguished}, there exists a stacked sequence $E=\{s_n\}_{n\in\N}\subset\oQp$ such that $W=\Z_{p,E}$. By assumption, the ramification index $e(W|\Z_p)=e$ is finite. By Remark \ref{pcvdist} and Proposition \ref{residuevaluerat}, there exists $n_0\in\N$ such that $\Gamma_w=\Gamma_{n}=\Gamma_{n_0}$ for each $n\geq n_0$. Equivalently, $e_{n}=e_{n_0}=e$ for each $n\geq n_0$. Let $n\geq n_0$. Note that $\delta_n=v_p(s_{n+1}-s_n)\in \Gamma_{O_{K_n}}$, where $K_n=\Q_p(s_n,s_{n+1})$. Note that the ramification index of $\Q_p(s_i)$ over $\Q_p$ is equal to $e$, for $i=n,n+1$. By Lemma \ref{ramification compositum}, the ramification index of $K_n$ over $\Q_p$ is bounded by $e^2$. If $d=\prod_{i=1}^{e^2} i$, then $d\delta_n\in\Z$, for each $n\geq n_0$. This shows that the gauge $\{\delta_n\}_{n\in\N}$ of $E$ has bounded denominator, so $\delta_n\nearrow\infty$, thus $E$ is Cauchy and converges to a (unique) element $\alpha$ of $\C_p\setminus\oQp$, since $E$ is of transcendental type by Lemma \ref{distinguishedpcv}. In particular, $W=\Z_{p,\alpha}$.
\end{proof}

\begin{Rem}
We say that an element $\alpha\in\C_p$ has \emph{bounded ramification} if the extension $\Q_p(\alpha)\supseteq\Q_p$ has finite ramification. We denote by $\C_p^{\text{br}}$ the set of all elements of $\C_p$ of bounded ramification; clearly, $\oQp\subset\C_p^{\text{br}}$. A transcendental element $\alpha\in\C_p$ has bounded ramification if and only if the set of ramification indexes $\{e_n\}_{n\in\N}$ attached to a stacked  sequence $E\subset\oQp$ converging to $\alpha$ is bounded; in fact, by Theorem   \ref{DVRoverZp}, the integer $e$ such that $e=e_n$ for all $n$ sufficiently large is equal to $e(\Q_p(\alpha)|\Q_p)$.  

We remark that not all the transcendental elements $\alpha\in\C_p$ have bounded ramification. For example, according  to \cite{IovZah} there exist \emph{generic} transcendental elements $t\in\C_p$ for $\C_p$, that is, the completion of $\Q_p(t)$ is equal to $\C_p$. In particular, the value group of the unique valuation of $\O_{p,t}$ is equal to $\Q$, so the corresponding ramification index is $\infty$. Hence, by Proposition \ref{residuevaluerat}, $\Z_{p,t}$ has  value group equal to $\Q$  and therefore the  set of ramification indexes $\{e_n\}_{n\in\N}$ is unbounded. 

We show in Theorem \ref{prescribed residue field and value group} that given any algebraic extension $k$ of $\F_p$ and group $\Gamma$ such that $\Z\subseteq\Gamma\subseteq\Q$, there exists a transcendental element $\alpha\in\C_p$ such that $\Z_{p,\alpha}$ has residue field $k$ and value group $\Gamma$, provided that either $[k:\F_p]$ is infinite or $\Gamma$ is not discrete (this condition being necessary by Corollary  \ref{eof infinite}).
\end{Rem}

\begin{Lem}\label{maxunr}
Let $l$ be an infinite algebraic extension of $\Q_p$ such that $e(l|\Q_p)$ is finite. Then $l$ is contained in the maximal unramified extension $K^{\text{unr}}$ of a finite extension $K$ of $\Q_p$.
\end{Lem}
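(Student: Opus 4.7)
The plan is to exploit the finiteness of $e(l|\Q_p)$ to pin down the value group of $l$, then use it to find the desired base field $K$.

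First I would observe that, since $e(l|\Q_p)=e<\infty$, the value group $\Gamma_l\subseteq\Q$ equals $\frac{1}{e}\Z$. Indeed, writing $l$ as the directed union of its finite subextensions $M/\Q_p$, we have $\Gamma_M=\frac{1}{e(M|\Q_p)}\Z$ and $\Gamma_l=\bigcup_M\Gamma_M$; the hypothesis $[\Gamma_l:\Z]=e$ forces each $e(M|\Q_p)$ to divide $e$, and the union to be exactly $\frac{1}{e}\Z$.

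Next, I would pick any $\pi\in l$ with $v_p(\pi)=1/e$ and set $K=\Q_p(\pi)$. Then $K$ is a finite extension of $\Q_p$ and, since $1/e\in\Gamma_K\subseteq\Gamma_l=\frac{1}{e}\Z$, its ramification index satisfies $e(K|\Q_p)=e$.

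The main step is then to show that every $\alpha\in l$ lies in $K^{\text{unr}}$. Given such an $\alpha$, the field $K(\alpha)$ is a finite subextension of $l/\Q_p$, so by the first step $e(K(\alpha)|\Q_p)$ divides $e$. On the other hand, by multiplicativity of ramification indices in the tower $\Q_p\subseteq K\subseteq K(\alpha)$ we have $e(K(\alpha)|\Q_p)=e(K(\alpha)|K)\cdot e$. Comparing these two facts forces $e(K(\alpha)|K)=1$, i.e.\ $K(\alpha)/K$ is unramified, so $K(\alpha)\subseteq K^{\text{unr}}$. Taking the union over all $\alpha\in l$ gives $l\subseteq K^{\text{unr}}$, as required.

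The argument is essentially a routine computation with ramification indices; the only mild subtlety is justifying that the value group of the infinite extension $l$ is indeed $\frac{1}{e}\Z$ (as opposed to just being contained in it), but this follows immediately from expressing $l$ as a filtered union of its finite subextensions.
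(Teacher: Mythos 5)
Your proof is correct, and it reaches the conclusion by a somewhat more direct route than the paper. The paper first writes $l$ as an increasing union of finite subextensions $\Q_p(s_n)$ (obtained by intersecting $l$ with the compositum $\Q_p^{(n)}$ of all extensions of $\Q_p$ of degree at most $n$, which is a finite extension because $\Q_p$ has only finitely many extensions of bounded degree), then notes that the chain of value groups $\Gamma_{s_n}\subseteq\Gamma_l$ must stabilize since $\Gamma_l$ is discrete, and takes $K=\Q_p(s_{n_0})$ at the stabilizing stage; for $n\geq n_0$ the extension $\Q_p(s_n)/K$ has trivial value-group growth, hence is unramified, giving $l\subseteq K^{\text{unr}}$. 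You instead pin down the value group exactly, $\Gamma_l=\frac{1}{e}\Z$ (a subgroup of $\Q$ containing $\Z$ with finite index $e$ is necessarily $\frac{1}{e}\Z$), choose $\pi\in l$ with $v_p(\pi)=1/e$, set $K=\Q_p(\pi)$, and use multiplicativity of ramification indices in the tower $\Q_p\subseteq K\subseteq K(\alpha)$ together with $e(K(\alpha)|\Q_p)\mid e$ to conclude $e(K(\alpha)|K)=1$ for every $\alpha\in l$; since residue fields here are finite (hence perfect), $e=1$ does imply $K(\alpha)\subseteq K^{\text{unr}}$, which is the same final mechanism as in the paper. The two arguments share the key idea of producing a finite subfield $K$ of $l$ that already absorbs all the ramification, but yours constructs $K$ in one step from an element realizing the generator of $\Gamma_l$, avoiding both the compositum-chain device and the finiteness of the number of extensions of bounded degree; the paper's chain construction has the mild advantage of running parallel to the stacked-sequence formalism used throughout the rest of the section.
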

\begin{proof}
For each $n\in\N$, let $\Q_p^{(n)}$ be the compositum of all the extensions of $\Q_p$ of degree bounded by $n$. Clearly, $\oQp=\bigcup_{n\in\N}\Q_p^{(n)}$ and $\Q_p^{(n)}\subset \Q_p^{(n+1)}$ for each $n\in\N$. Since $\Q_p$ has only finitely many extensions of bounded degree, $\Q_p^{(n)}=\Q_p(t_n)$ for some $t_n\in\oQp$. Now, for each $n\in\N$, we let $\Q_p(t_n)\cap l=\Q_p(s_n)$ for some $s_n\in l$. Clearly, $l=\bigcup_{n\in\N}\Q_p(s_n)$ and $\Q_p(s_n)\subset \Q_p(s_{n+1})$, for each $n\in\N$. Since $\Gamma_{s_n}\subseteq\Gamma_{s_{n+1}}\subseteq\Gamma_l$ for each $n\in\N$ and $\Gamma_l$ is discrete by assumption, there exists $n_0\in\N$ such that $\Gamma_{s_n}=\Gamma_{s_{n_0}}$ for each $n\geq n_0$. Therefore, if $K=\Q_p(s_{n_0})$, then $s_n\in K^{\text{unr}}$ for each $n\geq n_0$, so that $l\subseteq K^{\text{unr}}$.
\end{proof}

The next lemma shows that a transcendental element $t$ of $\C_p$ with bounded ramification arise as the limit of sequences contained in the maximal unramified extension $K^{\text{unr}}$ of a finite extension $K$ of $\Q_p$. We don't know whether there exists a stacked sequence in $K^{\text{unr}}$ which converges to $t$.

\begin{Prop}\label{elements bounded ramification}
Let $t\in\C_p^{\text{br}}$. Then $t$ is the limit of a  sequence contained in the maximal unramified extension of a finite extension of $\Q_p$.
\end{Prop}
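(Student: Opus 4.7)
The plan is to split the argument according to whether $t$ is algebraic or transcendental over $\Q_p$. If $t\in\oQp$, then $t$ lies in the finite extension $K=\Q_p(t)$ of $\Q_p$, and the constant sequence $\{t\}_{n\in\N}$, contained in $K\subseteq K^{\text{unr}}$, trivially converges to $t$; so this case is immediate and no further work is required.

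Assume henceforth that $t$ is transcendental. By Proposition \ref{ef transcendental elements Cp} there exists a Cauchy stacked sequence $E=\{s_n\}_{n\in\N}\subset\oQp$ converging to $t$ in $\C_p$. The hypothesis $t\in\C_p^{\text{br}}$ means that the extension $\Q_p(t)/\Q_p$ has finite ramification, and the remark immediately preceding the statement of the proposition equates this with the boundedness of the sequence of ramification indexes $e_n=e(\Q_p(s_n)|\Q_p)$ attached to the stacked sequence $E$. Since $e_n\mid e_{n+1}$ by Remark \ref{pcvdist}, the divisibility chain $\{e_n\}_{n\in\N}$ is eventually constant, and in fact equal to the finite integer $e=e(\Q_p(t)|\Q_p)$ from some index on.

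I would then collect all these finite extensions into a single algebraic field $l=\bigcup_{n\in\N}\Q_p(s_n)\subseteq\oQp$. Because $E$ is stacked, property i) of Definition \ref{stacked} forces the degrees $[\Q_p(s_n):\Q_p]$ to be strictly increasing, so $l$ is an infinite algebraic extension of $\Q_p$. Its value group satisfies $\Gamma_l=\bigcup_n\Gamma_n$, and since each $\Gamma_n=\tfrac{1}{e_n}\Z$ and $e_n$ is eventually $e$, we obtain $\Gamma_l=\tfrac{1}{e}\Z$; in particular $e(l|\Q_p)=e<\infty$. Lemma \ref{maxunr} then applies to $l$ and produces a finite extension $K$ of $\Q_p$ with $l\subseteq K^{\text{unr}}$. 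As $E\subseteq l\subseteq K^{\text{unr}}$ and $E$ converges to $t$ in $\C_p$, the proposition follows.

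The main obstacle is the second step, namely transferring the finite-ramification hypothesis on $\Q_p(t)/\Q_p$ to the boundedness of the ramification indexes along a stacked sequence converging to $t$; this is exactly what the remark preceding the proposition achieves, so no new ingredient is needed. Once this is secured, the rest is a clean assembly of the finite pieces $\Q_p(s_n)$ into an algebraic field with discrete value group, where Lemma \ref{maxunr} immediately produces the desired unramified extension.
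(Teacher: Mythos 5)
There is a genuine gap at the step where you set $l=\bigcup_{n\in\N}\Q_p(s_n)$ and treat it as an algebraic extension of $\Q_p$ with value group $\bigcup_n\Gamma_n$. A stacked sequence does \emph{not} satisfy $\Q_p(s_n)\subseteq\Q_p(s_{n+1})$ in general: the stacked condition only gives $v_p(s_{n+1}-s_n)>\delta(s_n)$, and Theorem \ref{propertydistinguished} then yields $\Gamma_n\subseteq\Gamma_{n+1}$, $k_n\subseteq k_{n+1}$ and $[\Q_p(s_n):\Q_p]\mid[\Q_p(s_{n+1}):\Q_p]$ --- but not a containment of the fields themselves (for that one would need $v_p(s_{n+1}-s_n)>\omega(s_n)$ and Krasner's Lemma, which is not part of Definition \ref{stacked}). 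Consequently the set $\bigcup_n\Q_p(s_n)$ need not be a field, so Lemma \ref{maxunr} cannot be applied to it; and if you replace the union by the compositum of the $\Q_p(s_n)$, the ramification bound is lost, since the compositum of fields of ramification index $e$ can have strictly larger ramification index (e.g.\ $\Q_2(\sqrt{2})$ and $\Q_2(\sqrt{-1})$ both have $e=2$ while their compositum is totally ramified of degree $4$ over $\Q_2$). Note also that your conclusion $E\subseteq l\subseteq K^{\text{unr}}$ would show that the stacked sequence itself lies in $K^{\text{unr}}$, which is precisely what the paper declares to be unknown in the sentence immediately preceding the proposition; this is a strong warning sign that the step cannot be justified as written. (Your reduction of $t\in\C_p^{\text{br}}$ to the eventual constancy $e_n=e$ is fine, as is the trivial algebraic case.)

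The paper avoids this obstacle by never working with the union of the fields $\Q_p(s_n)$: it applies Lemma \ref{maxunr} to the genuine field $\widehat{\Q_p(t)}\cap\oQp$, whose value group is contained in $\Gamma_t$ (hence discrete) because $\Q_p(t)\subset\widehat{\Q_p(t)}$ and $\widehat{\Q_p(t)}\cap\oQp\subset\widehat{\Q_p(t)}$ are immediate extensions, and it produces the required sequence from \cite[Theorem 1]{IovZah}, which guarantees that $\widehat{\Q_p(t)}\cap\oQp$ is dense in $\widehat{\Q_p(t)}$, so some Cauchy sequence in this field (not necessarily stacked) converges to $t$. To repair your argument you would either have to prove that the fields $\Q_p(s_n)$ can be chosen nested (which is not provided by Theorem \ref{Epcvtranscdistinguished} or Proposition \ref{ef transcendental elements Cp}), or switch to a field such as $\widehat{\Q_p(t)}\cap\oQp$ as the paper does.
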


\begin{proof}
By \cite[Theorem 1]{IovZah}, the completion of $\widehat{\Q_p(t)}\cap\oQp$ is equal to $\widehat{\Q_p(t)}$. In particular, there exists a Cauchy sequence $E=\{s_n\}_{n\in\N}\subset\widehat{\Q_p(t)}\cap\oQp$ converging to $t$. Now, since $\Q_p(t)\subset\widehat{\Q_p(t)}$ and $\widehat{\Q_p(t)}\cap\oQp\subset \widehat{\Q_p(t)}$ are immediate extensions, it follows that $\widehat{\Q_p(t)}\cap\oQp$ has value group $\Gamma_t$ and residue field $k_t$. By Lemma \ref{maxunr}, $\widehat{\Q_p(t)}\cap\oQp$ is contained in the maximal unramified extension of a finite extension of $\Q_p$. The statement follows.
\end{proof}

The following result is not new, see for example \cite[Lemma 2]{Lampert}, the present proof is different because it employs the notion of stacked sequence.

\begin{Thm}\label{prescribed residue field and value group}
Let $k$ be an algebraic extension of $\F_p$ and $\Gamma$ a totally ordered group with $\Z\subseteq\Gamma\subseteq\Q$, such that either $[k:\F_p]$ or $[\Gamma:\Z]$ is infinite (the last condition is equivalent to $\Gamma$ being not discrete). Then there exists a transcendental element $\alpha\in\C_p$ such that $k_{\alpha}=k$ and  $\Gamma_{\alpha}=\Gamma$. In particular, $\Z_{p,\alpha}$ has residue field $k$ and value group $\Gamma$.
\end{Thm}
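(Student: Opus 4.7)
The approach is to construct, inside $\oQp$, a Cauchy stacked sequence $E = \{s_n\}_{n \in \N}$ whose unique limit $\alpha \in \C_p$ is transcendental over $\Q_p$ and satisfies $k_\alpha = k$, $\Gamma_\alpha = \Gamma$. Once $E$ is built, the equality $\Z_{p,\alpha} = \Z_{p,E}$ together with Proposition \ref{residuevaluerat} will determine the residue field and value group of $\Z_{p,\alpha}$ as the unions of those attached to the $\Q_p(s_n)$, and Proposition \ref{ef transcendental elements Cp} will transfer this information back to the invariants $k_\alpha$ and $\Gamma_\alpha$ of $\alpha$ itself.

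The setup begins by writing $k = \bigcup_{n \geq 0} k_n$ and $\Gamma = \bigcup_{n \geq 0} \Gamma_n$ as ascending unions, where $k_n$ is the subfield of $k$ of degree $f_n$ over $\F_p$ and $\Gamma_n = \tfrac{1}{e_n}\Z$ with $e_n \mid e_{n+1}$; the hypothesis on $k$ and $\Gamma$ lets us choose the chains so that $e_n f_n$ is strictly increasing in $n$. Let $U_n$ be the unramified extension of $\Q_p$ of residue degree $f_n$ and set $K_n = U_n(p^{1/e_n})$; this produces a strictly increasing tower $K_0 \subset K_1 \subset \cdots$ of finite extensions of $\Q_p$, where $K_n$ has residue field $k_n$, value group $\Gamma_n$, and $[K_n:\Q_p] = e_n f_n$. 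Next I would construct $s_n \in K_n$ recursively so that $\Q_p(s_n) = K_n$: put $s_0 = 0 \in \Q_p$, and given $s_n$ fix a primitive element $\beta_{n+1}$ of the (separable) extension $K_{n+1}/\Q_p$ and define
\[
s_{n+1} = s_n + p^{M_n}\beta_{n+1}
\]
for an integer $M_n \in \N$ large enough to guarantee simultaneously: (a) the bound $M_n + v_p(\beta_{n+1} - \sigma(\beta_{n+1})) > \omega(s_n)$ for every $\sigma \in \Gal(\oQp/\Q_p)$ with $\sigma(\beta_{n+1}) \neq \beta_{n+1}$, which via Krasner's lemma forces any $\sigma$ fixing $s_{n+1}$ to fix both $s_n$ and $\beta_{n+1}$, hence $K_{n+1}$, so that $\Q_p(s_{n+1}) = K_{n+1}$; (b) $\delta_n := v_p(s_{n+1} - s_n) = M_n + v_p(\beta_{n+1}) > \delta(s_n)$, making $(s_n, \delta_n)$ a minimal pair and rendering $E$ stacked; and (c) $M_n \geq n$, so that $\delta_n \to \infty$ and $E$ is Cauchy.

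By Lemma \ref{distinguishedpcv}, $E$ is pseudo-convergent of transcendental type, so it admits no pseudo-limit in $\oQp$; being Cauchy it converges to a unique $\alpha \in \C_p$, which must therefore lie in $\C_p \setminus \oQp$ and is hence transcendental over $\Q_p$. Combining Proposition \ref{residuevaluerat} applied to $\Z_{p,E} = \Z_{p,\alpha}$ with Proposition \ref{ef transcendental elements Cp} yields $k_\alpha = \bigcup_n k_n = k$ and $\Gamma_\alpha = \bigcup_n \Gamma_n = \Gamma$, as required. The main technical obstacle is the Krasner argument in step (a): one must ensure that the perturbation $p^{M_n}\beta_{n+1}$ does not accidentally trap $s_{n+1}$ in a proper subfield of $K_{n+1}$. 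The obstruction is finite, governed by the finite set of conjugates $\sigma(\beta_{n+1}) \neq \beta_{n+1}$ and by $\omega(s_n)$, so it can be beaten by choosing $M_n$ large enough; once this is done, conditions (b) and (c) are automatically accommodated by enlarging $M_n$ further.
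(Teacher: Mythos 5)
Your proposal is correct and follows essentially the same route as the paper's proof: both realize $k=\bigcup_n k_n$ and $\Gamma=\bigcup_n\Gamma_n$ by a tower of local fields $K_n$, build the stacked Cauchy sequence recursively by adding a high-valuation multiple of a generator of $K_{n+1}$ to $s_n$, use a Krasner-type argument to keep $\Q_p(s_{n+1})=K_{n+1}$, and then conclude via Propositions \ref{residuevaluerat} and \ref{ef transcendental elements Cp}. The only cosmetic difference is that you make the fields $K_n=U_n(p^{1/e_n})$ and the Galois-fixing argument explicit, where the paper simply takes $K_n=\Q_p(s_n)$ with the prescribed invariants and perturbs by $a_ns_n$ with $a_n\in\Q_p$ of large valuation (and your condition (c) should be stated as $\delta_n\geq n$ rather than $M_n\geq n$ to account for $v_p(\beta_{n+1})$, a trivial adjustment).
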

Note that, by Corollary \ref{eof infinite}, the last claim shows that $[k:\F_p]\cdot[\Gamma:\Z]=\infty$ is necessary.
\begin{proof}
Since $\overline{\F_p}$ is countable, we may suppose that $k=\bigcup_{n\in\N}k_n$, where $k_n$ is a finite extension of $\mathbb{F}_p$, $k_n\subseteq k_{n+1}$ and $k_0=\mathbb{F}_p$. Similarly, $\Gamma=\bigcup_{n\in\N}\Gamma_n$, where $\Gamma_n$ is a discrete group, $\Gamma_n\subseteq\Gamma_{n+1}$ and $\Gamma_0=\Z$. Let $f=[k:\F_p]$ and $e=[\Gamma:\Z]$; then, either $e$ or $f$ is infinite. Without loss of generality, we may assume that for each $n$, $[k_{n+1}:k_n][\Gamma_{n+1}:\Gamma_n]>1$.

For each $n\in\N$, there exists a local field  $K_n=\Q_p(s_n)$ with residue field $k_n$ and value group $\Gamma_n$. By induction, we may also assume that $K_n\subset K_{n+1}$. Let $\{\lambda_n\}_{n\in\N}\subset\Q$ be a strictly increasing sequence in $\Q$ which is unbounded and $\lambda_0<\delta_0=v(s_1-s_0)$.

We define now a sequence $E=\{t_n\}_{n\in\N}\subset\oQp$ such that for each $n\in\N$, $n\geq 1$, we have
\begin{itemize}
\item[i)] $\Q_p(t_n)=\Q_p(s_n)$;
\item[ii)] $(t_{n-1},\delta_{n-1}=v_p(t_n-t_{n-1}))$ is a minimal pair;
\item[iii)] $\delta_{n-1}>\lambda_{n-1}$.
\end{itemize}
In particular, $E$ is a stacked sequence by conditions i) and ii) and Cauchy by condition iii) and the assumption on $\{\lambda_n\}_{n\in\N}$.

We set $t_0=s_0\in\Q_p$, $t_1=s_1\not\in\Q_p$ and $\delta_0=v_p(t_1-t_0)$. Note that $(t_0,\delta_0)$ is a minimal pair. We proceed by induction on $n$. We assume that for all $m<n$ we have chosen $t_m\in\oQp$ such that conditions i), ii), iii) above are satisfied.

We now show how to choose $t_n$. We choose $a_n\in\Q_p$, $a_n\not=0$, such that
$$v_p(a_n)>\max\{\omega(t_{n-1})-v_p(s_n),\lambda_{n-1}-v_p(s_n)\}$$ We then set
$$t_n=a_ns_n+t_{n-1}.$$
Note that $\Q_p(t_n)\subseteq\Q_p(s_n)$, since by induction $\Q_p(t_{n-1})=\Q_p(s_{n-1})$ and the last field is contained in $\Q_p(s_n)$. Now, since $\delta_{n-1}=v_p(t_n-t_{n-1})>\omega(t_{n-1})$, it follows by Krasner's Lemma that $\Q_p(t_{n-1})\subseteq\Q_p(t_n)$. This containment and the fact that $s_n=\frac{t_n-t_{n-1}}{a_n}$ show that $s_n$ is in $\Q_p(t_n)$, so that $\Q_p(t_n)=\Q_p(s_n)$. Moreover, note also that $\delta_{n-1}>\lambda_{n-1}$. Hence, $E=\{t_n\}_{n\in\N}$ is a stacked sequence which is Cauchy, so $E$ converges to a transcendental element $\alpha$ of  $\C_p$. By Proposition \ref{residuevaluerat}, $\Z_{p,E}=\Z_{p,\alpha}$ has residue field $k$ and value group $\Gamma$, as wanted. By  Proposition \ref{ef transcendental elements Cp}, $\Z_{p,\alpha}$ is isomorphic to $\O_{p,\alpha}$, so it follows the $\Gamma_{\alpha}=\Gamma$ and $k_{\alpha}=k$.
\end{proof}
\begin{Rem}\label{discrete Gamma}
We remark that without condition iii) above in the proof of Theorem \ref{prescribed residue field and value group}, in general we may only conclude that there exists a stacked sequence $E\subset\oQp$ (which may not be Cauchy) such that the valuation domain $\Z_{p,E}$ has residue field $k$ and value group $\Gamma$. If instead $\Gamma$ is discrete by assumption, condition iii) is not necessary: in fact, there exists $n_0\in\N$ such that $\Gamma_n=\Gamma_{n_0}=\Gamma$ for all $n\geq n_0$; that is, $K_n=\Q_p(s_n)$ is an unramified extension of $K_{n_0}$ for all $n>n_0$. Hence, $E\subset \bigcup_{n\in\N}K_n$ is Cauchy, and so $\Z_{p,E}=\Z_{p,\alpha}$, where $\alpha\in\C_p^{\text{br}}$ is the transcendental limit of $E$. 
\end{Rem}

We close this section showing that the statement of \cite[Proposition 1]{IovZah} is wrong, namely, in general the completion of $\Q_p(X)$ with respect to a residually algebraic torsion extension $W$ of $\Z_p$ may not be a subfield of $\C_p$. The mistake is due to the fact that if $W=\Z_{p,E}$ for some pseudo-convergent sequence $E\subset\oQp$ of transcendental type, then $X$ is a pseudo-limit of $E$ with respect to $w$ and may not be a limit (that is, $E$ may not be Cauchy).

\begin{Prop}\label{completion rat Qp}
Let $W$ be a residually algebraic torsion extension of $\Z_p$ to $\Q_p(X)$. Then the completion $\widehat{\Q_p(X)}$ with respect to $W$ is (isomorphic to) a subfield of $\C_p$ if and only if there exists a transcendental element $\alpha$ in $\C_p$ such that $W=\Z_{p,\alpha}$.
\end{Prop}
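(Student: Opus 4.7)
The plan is to prove the two directions separately. The easy $(\Leftarrow)$ direction is a direct consequence of Proposition \ref{ef transcendental elements Cp}, while the converse will be obtained by extracting $\alpha$ as the image of $X$ under the embedding.

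For $(\Leftarrow)$, suppose $W=\Z_{p,\alpha}$ for some transcendental $\alpha\in\C_p$. Proposition \ref{ef transcendental elements Cp} yields an isomorphism of valued fields $\mathrm{ev}_{\alpha}\colon (\Q_p(X),\Z_{p,\alpha})\to (\Q_p(\alpha),\O_{p,\alpha})$. Passing to completions, $\widehat{\Q_p(X)}\cong \widehat{\Q_p(\alpha)}$; since $\Q_p(\alpha)\subseteq\C_p$ and $\C_p$ is complete, $\widehat{\Q_p(\alpha)}$ is realized as the topological closure of $\Q_p(\alpha)$ inside $\C_p$, hence a subfield.

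For $(\Rightarrow)$, suppose there is a valued-field embedding $\iota\colon \widehat{\Q_p(X)}\hookrightarrow\C_p$ extending the inclusion $\Q_p\hookrightarrow\C_p$. Set $\alpha:=\iota(X)\in\C_p$. The restriction of $\iota$ to $\Q_p(X)$ is an injective $\Q_p$-algebra morphism, so no nonzero element of $\Q_p[X]$ vanishes at $\alpha$; equivalently, $\alpha$ is transcendental over $\Q_p$. Hence $\iota|_{\Q_p(X)}$ coincides with the evaluation map $\mathrm{ev}_{\alpha}$. Since $\iota$ preserves valuations, $w(\phi)=v_p(\phi(\alpha))$ for every $\phi\in\Q_p(X)$, which gives exactly $W=\Z_{p,\alpha}$.

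The proof itself is short once Proposition \ref{ef transcendental elements Cp} is available; the conceptual content lies in why the biconditional is nontrivial. By Theorem \ref{Epcvtranscdistinguished} every such $W$ has the shape $\Z_{p,E}$ for a strongly stacked $E\subset\oQp$, but if $E$ is not Cauchy then no $\alpha\in\C_p$ can serve as its limit. This is precisely the oversight in \cite[Proposition 1]{IovZah}: the fact that $X$ is a pseudo-limit of $E$ with respect to $w$ does \emph{not} force $E$ to converge in $\C_p$, and hence does not produce an embedding of $\widehat{\Q_p(X)}$ into $\C_p$ unless the gauge of $E$ is cofinal in $\Q$.
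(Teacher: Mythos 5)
Your argument is correct, and while the $(\Leftarrow)$ direction coincides with the paper's (both pass through Proposition \ref{ef transcendental elements Cp} and identify $\widehat{\Q_p(X)}$ with the closure of $\Q_p(\alpha)$ in $\C_p$), your $(\Rightarrow)$ direction takes a genuinely different and more direct route. The paper never extracts $\alpha$ as $\iota(X)$; instead it fixes a pseudo-convergent sequence $E\subset\oQp$ of transcendental type with $W=\Z_{p,E}$ (Theorem \ref{Epcvtranscdistinguished}), picks a Cauchy sequence $F\subset\oQp$ converging to $X\in\C_p$, and compares $\overline v_E(X-t_n)$ with $\overline v_E(X-s_m)$ to conclude that the gauge of $E$ is unbounded, so $E$ is Cauchy and converges to the desired transcendental $\alpha$. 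Your version bypasses the sequence machinery entirely: once the completion sits isometrically inside $\C_p$, the element $\alpha=\iota(X)$ is transcendental over $\Q_p$ by injectivity of $\iota$ on $\Q_p(X)$, the restriction $\iota|_{\Q_p(X)}$ is forced to be $\mathrm{ev}_\alpha$, and $W=\iota^{-1}(\O_p)\cap\Q_p(X)=\Z_{p,\alpha}$. One small point worth making explicit: the phrase ``extending the inclusion $\Q_p\hookrightarrow\C_p$'' is not an extra hypothesis, since any valuation-preserving field embedding of $\widehat{\Q_p(X)}$ into $\C_p$ fixes $\Q$ and is continuous, hence restricts to the identity on $\Q_p$ by density of $\Q$; this also explains why the hypothesis must be read in the valued (not purely field-theoretic) sense, as both you and the paper do. What the two approaches buy: yours is shorter and self-contained (it does not even invoke Theorem \ref{Epcvtranscdistinguished}), while the paper's sequence argument makes visible the fact exploited immediately afterwards, namely that containment of the completion in $\C_p$ forces any defining pseudo-convergent sequence to be Cauchy, which is precisely the diagnosis of the error in \cite{IovZah} that you correctly describe in your closing discussion.
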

\begin{proof}
By Theorem \ref{Epcvtranscdistinguished}, there exists a pseudo-convergent  sequence $E=\{s_n\}_{n\in\N}\subset\oQp$ of transcendental type such that $W=\Z_{p,E}$. 



Suppose that $\widehat{\Q_p(X)}\subseteq\C_p$. In particular, $X\in\C_p$ and so there exists a Cauchy sequence $F=\{t_n\}_{n\in\N}\subset\oQp$ which tends to $X$. Since $\Q_p(X)\subset\oQp(X)$ is an algebraic extension and $\C_p$ is algebraically closed, then also the completion of $\oQp(X)$ with respect to $\overline{W}=\overline{\Z_p}_E$ is contained in $\C_p$. Without loss of generality, we may suppose that the restriction of $v_p$ to $\oQp(X)$ is equal to $\overline w$. In particular, $\overline{w}(X-t_n)=v_p(X-t_n)\nearrow\infty$. Since $E$ is of transcendental type, for each $n$ there exists $m_0$ such that $\overline{w}(X-t_n)<\overline{w}(X-s_m)$ for each $m\geq m_0$. This shows that the gauge of $E$ tends to infinity, thus $E$ is Cauchy; in particular, $E$ converges to a transcendental element $\alpha\in\C_p$. Therefore, $W=\Z_{p,\alpha}$.

Conversely, let $W=\Z_{p,\alpha}$ for some transcendental element $\alpha\in\C_p$. Then, by Proposition \ref{ef transcendental elements Cp}, the completion $\widehat{\Q_p(X)}$ with respect to $\Z_{p,\alpha}$ is isomorphic to the completion of $\Q_p(\alpha)$ and therefore can be identified to a subfield of $\C_p$.
\end{proof} 
In particular, if $W=\Z_{p,E}$ for some stacked non-Cauchy sequence $E\subset\oQp$, then $\widehat{\Q_p(X)}$ is not contained in $\C_p$.

\subsection{Residually algebraic torsion extensions of $\Z_{(p)}$}\label{RatZpQX}

We now characterize residually algebraic torsion extensions of $\Z_{(p)}$ to $\Q(X)$. We remark that such a valuation domain may have an extension to $\Q_p(X)$ which is a residually algebraic extension of $\Z_p$  but is not torsion. For example, let $\alpha\in\oQp$ be transcendental over $\Q$, then $\Z_{(p),\alpha}$ is torsion but $\Z_{p,\alpha}$ is not (the one dimensional valuation overring of $\Z_{p,\alpha}$ is $\Q_p[X]_{(p_{\alpha}(X))}$, where $p_{\alpha}(X)$ is the minimal polynomial of $\alpha$ over $\Q_p$).

Given $\alpha\in\C_p$, we consider the following valuation domain of $\Q(X)$:
$$\Z_{(p),\alpha}=\{\phi\in\Q(X)\mid\phi(\alpha)\in \O_p\}$$
which is just the contraction to $\Q(X)$ of $\Z_{p,\alpha}$ considered in \S \ref{stackedseq}. Similarly, if $E=\{s_n\}_{n\in\N}\subset\oQp$ is a pseudo-convergent sequence of transcendental type, then we set 
$$\Z_{(p),E}=\{\phi\in\Q(X)\mid \phi(s_n)\in\oZp,\text{ for all sufficiently large }n\in\N \}$$
which is equal to $\Z_{p,E}\cap\Q(X)$.

The next proposition is analogous to Proposition \ref{ratpcvtransc}, and characterizes residually algebraic torsion extensions of $\Z_{(p)}$ to $\Q(X)$ in terms of pseudo-convergent sequences of $\oQp$ which are of transcendental type over $\Q$; clearly, every pseudo-convergent sequence of transcendental type of $\oQp$ belongs to this class. As a particular case, we find again part of the result of \cite[Theorem 2.5]{PerTransc}.

\begin{Prop}\label{ratQ(X)}
Let $p\in\PP$ and let $W$ be a residually algebraic torsion extension of $\Z_{(p)}$ to $\Q(X)$.  Then there exists a pseudo-convergent sequence $E\subset\oQp$ of transcendental type over $\Q$ such that $W=\Z_{(p),E}$. More precisely, let $e,f$ be the ramification index and residue field degree of $W$ over $\Z_{(p)}$, respectively. Let $\widehat{\Q(X)}$ be the completion of $\Q(X)$ with respect to the $W$-adic topology. Then the following conditions are equivalent:
\begin{enumerate}
\item $\widehat{\Q(X)}$ is a finite extension of $\Q_p$.
\item $X$ is algebraic over $\Q_p$.
\item $W=\Z_{(p),\alpha}$, for some $\alpha\in\oQp$ transcendental over $\Q$. 
\item $ef<\infty$.
\end{enumerate}
If either one of these conditions holds, then the sequence $E$ above is Cauchy and converges to $\alpha$ (and $E$ is therefore of algebraic type over $\Q_p$). Moreover, we have $\Gamma_w=\Gamma_{\alpha}$ and $k_w=k_{\alpha}$.

If $ef=\infty$, then $E\subset\oQp$ is of transcendental type over $\Q_p$ and $\Z_{(p),E}\subset\Z_{p,E}$ is an immediate extension.
\end{Prop}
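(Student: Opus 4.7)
The plan is to reduce to Proposition \ref{ratpcvtransc} by splitting according to whether the image of $X$ in the completion $\widehat{\Q(X)}$ is algebraic or transcendental over $\Q_p$; note that $\Q_p$ sits inside $\widehat{\Q(X)}$ as the closure of $\Q$, since $w$ restricts to (an equivalent of) $v_p$ on $\Q$. This dichotomy will simultaneously yield the existence of $E$ and the four-way equivalence.

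If $X$ is transcendental over $\Q_p$ in $\widehat{\Q(X)}$, then $\Q_p(X)$ embeds into $\widehat{\Q(X)}$, and the restriction of the valuation of $\widehat{\Q(X)}$ to $\Q_p(X)$ gives an extension $\widetilde{W}$ of $W$ that also extends $\Z_p$. Since completion preserves value group and residue field, we have $\Gamma_w\subseteq\Gamma_{\widetilde{w}}\subseteq\Gamma_{\widehat{w}}=\Gamma_w$ and similarly for residue fields, so $\widetilde{W}$ is a residually algebraic torsion extension of $\Z_p$ with the same invariants as $W$. Proposition \ref{ratpcvtransc} then produces $E\subset\oQp$ pseudo-convergent of transcendental type over $\Q_p$ (a fortiori over $\Q$) with $\widetilde{W}=\Z_{p,E}$, whence $W=\Z_{(p),E}$ by contraction. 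If instead $X$ is algebraic over $\Q_p$, its image $\alpha$ lies in $\oQp$ and the evaluation map $\Q(X)\to\Q(\alpha)$, $\phi(X)\mapsto\phi(\alpha)$, is a valuation-preserving isomorphism, which forces $\alpha$ transcendental over $\Q$ and gives $W=\Z_{(p),\alpha}$; extracting a pseudo-convergent subsequence $E$ from any Cauchy sequence in $\oQp$ converging to $\alpha$, continuity of rational functions at $\alpha$ yields $W=\Z_{(p),E}$, with $E$ of transcendental type over $\Q$ by transcendence of $\alpha$ over $\Q$.

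The equivalences follow from this dichotomy: (1) $\Leftrightarrow$ (2) because $\widehat{\Q(X)}=\Q_p(\alpha)$ is finite over $\Q_p$ exactly when $X=\alpha$ is algebraic over $\Q_p$; (2) $\Leftrightarrow$ (3) by the two directions just described; (3) $\Rightarrow$ (4) since $ef=e_\alpha f_\alpha=[\Q_p(\alpha):\Q_p]<\infty$; and the remaining implication (4) $\Rightarrow$ (2) is proved by contrapositive, using that if $X$ were transcendental over $\Q_p$, the extension $\widetilde{W}$ built above would be a residually algebraic torsion extension of $\Z_p$ with $e_{\widetilde{W}}=e$ and $f_{\widetilde{W}}=f$, and Corollary \ref{eof infinite} would then force $ef=\infty$.

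The refined statements fall out of the two-case construction: if any of (1)--(4) holds we are in the algebraic case, and $E$ can be taken Cauchy with limit $\alpha\in\oQp$, hence of algebraic type over $\Q_p$, with $\Gamma_w=\Gamma_\alpha$ and $k_w=k_\alpha$ via the evaluation isomorphism; if $ef=\infty$ we are in the transcendental case, and $E$ coming from Proposition \ref{ratpcvtransc} is of transcendental type over $\Q_p$, with $\widetilde{W}=\Z_{p,E}$ an immediate extension of $\Z_p$, so $\Z_{(p),E}\subset\Z_{p,E}$ is immediate. The main obstacle is verifying, in the transcendental case, that $\widetilde{W}$ inherits the value group and residue field of $W$ unchanged so that Proposition \ref{ratpcvtransc} and Corollary \ref{eof infinite} apply; once this standard completion property is in hand, the rest is bookkeeping.
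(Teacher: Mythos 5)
Your proposal is correct and follows the same overall route as the paper: pass to the completion $\widehat{\Q(X)}$, observe that $\Q_p$ sits inside it because $W$ is a (rank-one) torsion extension restricting to $v_p$ on $\Q$, and split according to whether $X$ is algebraic or transcendental over $\Q_p$; in the algebraic case identify $\widehat{\Q(X)}=\Q_p(\alpha)$ and take a Cauchy (hence pseudo-convergent, transcendental type over $\Q$) sequence converging to $\alpha$, in the transcendental case contract $\widehat{W}$ to $\Q_p(X)$ and apply the $\Q_p$-theory. The genuine differences are localized: where the paper disposes of $(3)\Leftrightarrow(4)$ and of $\Gamma_w=\Gamma_\alpha$, $k_w=k_\alpha$ by citing \cite[Proposition 2.2 and Lemma 2.4]{PerTransc}, you prove $(4)\Rightarrow(2)$ internally by contrapositive, noting that $\wW=\widehat{W}\cap\Q_p(X)$ is a residually algebraic torsion extension of $\Z_p$ with the same $e$ and $f$ as $W$ (by sandwiching $\Gamma_w\subseteq\Gamma_{\widetilde w}\subseteq\Gamma_{\widehat w}=\Gamma_w$ and likewise for residue fields) so that Corollary \ref{eof infinite} forces $ef=\infty$; this makes the equivalence self-contained within the paper at the cost of spelling out the standard immediacy of the completion, which is exactly the "bookkeeping" you flag and which also settles the immediacy of $\Z_{(p),E}\subset\Z_{p,E}$ just as in the paper's last step. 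You also invoke Proposition \ref{ratpcvtransc} rather than Theorem \ref{Epcvtranscdistinguished} to produce $E$ in the $ef=\infty$ case, which suffices since the statement only asks for a pseudo-convergent sequence of transcendental type (the paper's choice additionally makes $E$ stacked, which it exploits elsewhere). One small slip: in the final paragraph you call $\Z_{p,E}$ "an immediate extension of $\Z_p$"; it is an immediate extension of $W=\Z_{(p),E}$ (its invariants are those of $W$, not of $\Z_p$), and your own sandwich argument is the correct justification, so this is a misstatement of no consequence to the proof.
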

\begin{proof}
Note that since $W$ is a torsion extension of $\Z_{(p)}$, the $p$-adic completion $\Q_p$ of $\Q$ is contained in $\widehat{\Q(X)}$ (see for example the arguments given in the proof of \cite[Corollary 2.6]{APZTheorem}).

If $\widehat{\Q(X)}$ is a finite extension of $\Q_p$ then clearly $X$ is algebraic over $\Q_p$, so 1. implies 2. If $X$ is algebraic over $\Q_p$, we may identify $X$ with some $\alpha\in\oQp$; $\Q_p(\alpha)$ is a finite extension of $\Q_p$, hence complete. So, $\widehat{\Q(X)}=\Q_p(\alpha)$. As in the proof of \cite[Theorem 2.5]{PerTransc} it follows easily that $W=\Z_{(p),\alpha}$. Therefore, 2. implies 3. 

If $W=\Z_{(p),\alpha}$ for some $\alpha\in\oQp$ transcendental over $\Q$, then by \cite[Proposition 2.2]{PerTransc}, $ef<\infty$, so 3. implies 4. Finally, 4. implies 1. by \cite[Lemma 2.4]{PerTransc} because $e(\widehat{W}\mid\Z_p)=e$ and $f(\widehat{W}\mid\Z_p)=f$.

Note that if $E\subset\oQp$ is a pseudo-convergent sequence such that $\Z_{(p),E}=\Z_{(p),\alpha}$, then by Lemma \ref{extension Vpa to completion} below we have $\Z_{p,E}=\Z_{p,\alpha}$, so by \cite[Proposition 5.3]{PS2} we have that $E$ is Cauchy and converges to $\alpha$.


The claims about the value group and residue field of $\Z_{(p),\alpha}$ follow by \cite[Proposition 2.2]{PerTransc}.
 
\medskip

If $ef=\infty$ then $X$ is transcendental over $\Q_p$ by the previous part of the proof; in particular, the field of rational functions $\Q_p(X)$ is contained in $\widehat{\Q(X)}$. If $\wW=\widehat{W}\cap\Q_p(X)$, then $\wW$ is a residually algebraic torsion extension of $\Z_p$ to $\Q_p(X)$, so by Theorem \ref{Epcvtranscdistinguished} there exists a stacked sequence $E\subset\oQp$ such that $\wW=\Z_{p,E}$ (by Lemma \ref{distinguishedpcv}, $E$ is a pseudo-convergent sequence of transcendental type, necessarily unbounded). Restricting down to $\Q(X)$ we get $W=\Z_{(p),E}$. Finally, since $W\subset\widehat{W}$ is an immediate extension, it follows that $\Z_{(p),E}\subset\Z_{p,E}$ is an immediate extension, too. Hence, the value group and residue field of $\Z_{(p),E}$ are the same as those of $\Z_{p,E}$, respectively (see Proposition \ref{residuevaluerat}).
\end{proof}

The following statement is the analogous of Proposition \ref{completion rat Qp} for residually algebraic torsion extensions of $\Z_{(p)}$ to $\Q(X)$.
\begin{Cor}\label{completion rat Q}
Let $W$ be a residually algebraic torsion extension of $\Z_{(p)}$ to $\Q(X)$. Then the completion $\widehat{\Q(X)}$ with respect to $W$ is (isomorphic to) a subfield of $\C_p$ if and only if  there exists $\alpha\in\C_p$, transcendental over $\Q$, such that $W=\Z_{(p),\alpha}$.
\end{Cor}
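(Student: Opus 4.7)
The plan is to deduce the corollary from Proposition \ref{ratQ(X)} and Proposition \ref{completion rat Qp}, splitting the argument according to whether the invariants $e=e(W|\Z_{(p)})$ and $f=f(W|\Z_{(p)})$ satisfy $ef<\infty$ or $ef=\infty$.

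If $ef<\infty$, then by Proposition \ref{ratQ(X)} we have $W=\Z_{(p),\alpha}$ for some $\alpha\in\oQp\subset\C_p$ transcendental over $\Q$, and $\widehat{\Q(X)}\cong\Q_p(\alpha)$ is a finite extension of $\Q_p$, hence embeds into $\C_p$; both sides of the equivalence then hold automatically. If instead $ef=\infty$, Proposition \ref{ratQ(X)} gives that $X$ is transcendental over $\Q_p$, that $\Q_p(X)\subseteq\widehat{\Q(X)}$, and that the contracted valuation domain $\wW=\widehat{W}\cap\Q_p(X)$ is a residually algebraic torsion extension of $\Z_p$ with $W=\wW\cap\Q(X)$. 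The key intermediate step will be to show that $\widehat{\Q(X)}$ coincides with $\widehat{\Q_p(X)}$ (the $\wW$-adic completion) inside $\widehat{W}$. Once this is granted, Proposition \ref{completion rat Qp} applied to $\wW$ yields $\widehat{\Q_p(X)}\subseteq\C_p$ if and only if $\wW=\Z_{p,\alpha}$ for some transcendental $\alpha\in\C_p$, and contracting to $\Q(X)$ gives $W=\wW\cap\Q(X)=\Z_{p,\alpha}\cap\Q(X)=\Z_{(p),\alpha}$, as required.

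The main obstacle is the density claim underlying the identification $\widehat{\Q(X)}=\widehat{\Q_p(X)}$, namely that $\Q(X)$ is dense in $\Q_p(X)$ in the $\wW$-adic topology. To prove it, I would take $\phi=f/g\in\Q_p(X)$ and approximate the coefficients of $f$ and $g$ in $\Q_p$ by rationals, producing $\tilde f,\tilde g\in\Q[X]$ with the coefficient-wise $v_p$-differences arbitrarily large. Since $f,g$ have bounded degree and $v_{\wW}(X)$ is a finite rational number, the elementary estimate $v_{\wW}(\sum_i c_iX^i)\geq\min_i\bigl(v_p(c_i)+i\,v_{\wW}(X)\bigr)$ forces $v_{\wW}(f-\tilde f)$ and $v_{\wW}(g-\tilde g)$ to tend to infinity. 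For sufficiently close approximations, the ultrametric inequality gives $v_{\wW}(\tilde g)=v_{\wW}(g)$, and the decomposition $\phi-\tilde f/\tilde g=(f(\tilde g-g)-g(\tilde f-f))/(g\tilde g)$ shows that $v_{\wW}(\phi-\tilde f/\tilde g)$ can be made arbitrarily large, establishing density and completing the reduction to Proposition \ref{completion rat Qp}.
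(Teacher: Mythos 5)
Your proposal is correct and follows essentially the same route as the paper: split via Proposition \ref{ratQ(X)} according to $ef<\infty$ or $ef=\infty$, identify $\widehat{\Q(X)}$ with the $\wW$-adic completion $\widehat{\Q_p(X)}$, and invoke Proposition \ref{completion rat Qp}. The only real difference is your coefficient-approximation proof of density, which is more work than needed: since $\Q(X)\subseteq\Q_p(X)\subseteq\widehat{\Q(X)}$ and $\Q(X)$ is dense in its completion, $\Q_p(X)$ is automatically dense in the complete field $\widehat{\Q(X)}$, which is exactly why the paper states the identification $\widehat{\Q_p(X)}=\widehat{\Q(X)}$ without further argument.
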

\begin{proof}
According to Proposition \ref{ratQ(X)}, when passing to the completion, either $X$ is algebraic over $\Q_p$ or $X$ is transcendental over $\Q_p$ and consequently either $\widehat{\Q(X)}\subset\oQp\subset\C_p$ or $\Q_p(X)\subset\widehat{\Q(X)}$, respectively. In the first case, $W=\Z_{(p),\alpha}$ for some $\alpha\in\oQp\subset\C_p$ transcendental over $\Q$. In the second case, $\widehat{\Q_p(X)}=\widehat{\Q(X)}$, where the completion of $\Q_p(X)$ is considered with respect to the valuation domain $\wW=\widehat{W}\cap\Q_p(X)$. In particular, by Proposition \ref{completion rat Qp}, we get that $\widehat{\Q(X)}\subseteq\C_p$ if and only if there exists a transcendental element $\alpha\in\C_p$ such that $W=\Z_{(p),\alpha}$.
\end{proof}

In particular, if $W=\Z_{(p),E}$ for some stacked non-Cauchy sequence $E\subset\oQp$, then $\widehat{\Q(X)}$ is not contained in $\C_p$.

The following result is the analogous of Theorem \ref{prescribed residue field and value group} for building residually algebraic torsion extensions $W$ of $\Z_{(p)}$ to $\Q(X)$ with prescribed residue field $k$ and value group $\Gamma$. Note that, contrary to that Theorem, now we are not assuming anymore that $[k:\F_p]\cdot[\Gamma:\Z]=\infty$.
\begin{Thm}\label{prescribed residue field and value group rat Q(X)}
Let $k$ be an algebraic extension of $\F_p$ and $\Gamma$ a totally ordered group such that $\Z\subseteq\Gamma\subseteq\Q$. Then there exists $\alpha\in\C_p$, transcendental over $\Q$, such that $\Z_{(p),\alpha}$ has residue field $k$ and value group $\Gamma$.
\end{Thm}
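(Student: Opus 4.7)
I would split according to whether $[k:\F_p]\cdot[\Gamma:\Z]$ is infinite or finite. In the infinite case, Theorem \ref{prescribed residue field and value group} already produces a transcendental element $\alpha$ of $\C_p$ realizing $k$ and $\Gamma$ as the residue field and value group of $\Z_{p,\alpha}$, and the only task left is to check that contracting from $\Q_p(X)$ down to $\Q(X)$ preserves both invariants. In the finite case, instead, one constructs $\alpha$ directly inside a suitable local field.

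\noindent\emph{Case 1: $[k:\F_p]\cdot[\Gamma:\Z]=\infty$.} Invoke Theorem \ref{prescribed residue field and value group} to obtain $\alpha\in\C_p$ with $k_\alpha=k$ and $\Gamma_\alpha=\Gamma$; by Remark \ref{eafaoQp}, $e_\alpha f_\alpha=\infty$ forces $\alpha\notin\oQp$, so $\alpha$ is transcendental over $\Q_p$, hence over $\Q$. Set $W=\Z_{(p),\alpha}=\Z_{p,\alpha}\cap\Q(X)$. Since $W$ is a torsion extension of $\Z_{(p)}$, the closure of $\Q$ in $\widehat{\Q(X)}$ is $\Q_p$, so $\Q_p(X)\subseteq\widehat{\Q(X)}$ and the two inclusions $\Q(X)\subseteq\Q_p(X)\subseteq\widehat{\Q(X)}$ give $\widehat{\Q(X)}=\widehat{\Q_p(X)}$. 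By Proposition \ref{completion rat Qp} this common completion embeds in $\C_p$ via $X\leftrightarrow\alpha$, so $X$ is transcendental over $\Q_p$ in $\widehat{\Q(X)}$. The equivalence of $(2)$ and $(4)$ in Proposition \ref{ratQ(X)} then yields $e_Wf_W=\infty$, and the final clause of that proposition gives that $W\subseteq\Z_{p,\alpha}$ is immediate; hence $W$ has residue field $k$ and value group $\Gamma$.

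\noindent\emph{Case 2: $[k:\F_p]\cdot[\Gamma:\Z]<\infty$.} Write $f=[k:\F_p]$ and $e=[\Gamma:\Z]$, so $\Gamma=\tfrac{1}{e}\Z$ and $k$ is a finite extension of $\F_p$. Fix a finite extension $K$ of $\Q_p$ with residue field degree $f$ and ramification index $e$, and a primitive element $\alpha_0$ with $K=\Q_p(\alpha_0)$. For every $c\in\Q_p$ one still has $\Q_p(\alpha_0+c)=K$, since $\alpha_0=(\alpha_0+c)-c\in\Q_p(\alpha_0+c)$. Because $\overline{\Q}\cap\Q_p$ is countable while $\Q_p$ is uncountable, we may choose $c\in\Q_p$ so that $\alpha:=\alpha_0+c\in K\setminus\overline{\Q}$. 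Then $\alpha$ is transcendental over $\Q$ and $\Q_p(\alpha)=K$, so by \cite[Proposition 2.2]{PerTransc} (also recalled in Proposition \ref{ratQ(X)}), $\Z_{(p),\alpha}$ inherits the residue field $k$ and value group $\Gamma$ of $\Q_p(\alpha)=K$.

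\noindent\emph{Main obstacle.} The substantive step is verifying in Case 1 that contracting $\Z_{p,\alpha}$ down to $\Z_{(p),\alpha}$ preserves both invariants. This is the ``immediate'' conclusion of Proposition \ref{ratQ(X)}, and it hinges precisely on the fact that $\alpha\notin\oQp$, so that $X$ becomes transcendental over $\Q_p$ in the completion of $\Q(X)$; the rest of the argument is bookkeeping. In Case 2 the only delicate point is the perturbation trick to force the primitive element to be transcendental over $\Q$ without leaving $K$.
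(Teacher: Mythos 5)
Your proof is correct and follows essentially the same route as the paper: split on whether $[k:\F_p]\cdot[\Gamma:\Z]$ is finite or infinite, handle the finite case by an element of $\oQp$ transcendental over $\Q$ (the paper cites this as well known, you spell out the perturbation--countability argument), and in the infinite case combine Theorem \ref{prescribed residue field and value group} with the immediateness of $\Z_{(p),E}\subset\Z_{p,E}$ from the last part of Proposition \ref{ratQ(X)}. The only cosmetic difference is that you identify the relevant extension of $\Z_p$ with $\Z_{p,\alpha}$ via the completion (Proposition \ref{completion rat Qp}), whereas the paper routes through the stacked Cauchy sequence of Proposition \ref{ef transcendental elements Cp}; both amount to the same verification.
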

\begin{proof}
Let $e=[\Gamma:\Z]$ and $f=[k:\F_p]$. If $ef<\infty$, then it is well known that  there exists $\alpha\in\oQp$ transcendental over $\Q$ such that $\O_{p,\alpha}$ has residue field $k$ and value group $\Gamma$. Hence, by \cite[Proposition 2.2]{PerTransc}, $\Z_{(p),\alpha}$ is the desired extension of $\Z_{(p)}$.

If $ef=\infty$, then, by Theorem \ref{prescribed residue field and value group}, there exists a transcendental element $\alpha\in\C_p$ such that $\Z_{p,\alpha}$ has residue field $k$ and value group $\Gamma$. Clearly, $\Z_{p,\alpha}\cap\Q(X)=\Z_{(p),\alpha}$ is a residually algebraic torsion extension of $\Z_{(p)}$ to  $\Q(X)$. Moreover, by Proposition  \ref{ef transcendental elements Cp}, $\Z_{p,\alpha}=\Z_{p,E}$ for some stacked Cauchy sequence $E\subset\oQp$ which converges to $\alpha$. In particular, $\Z_{(p),\alpha}=\Z_{(p),E}$. By the last part of Proposition \ref{ratQ(X)}, $\Z_{(p),E}\subset\Z_{p,E}$ is an immediate extension, so $\Z_{(p),\alpha}$ has residue field $k$ and value group $\Gamma$.
\end{proof}


Now we are able to describe the DVRs of $\Q(X)$ which are residually algebraic   extensions of  $\Z_{(p)}$, for some $p\in\PP$.  We recall that every $\sigma\in G_{\Q_p}=\Gal(\oQp/\Q_p)$   extends uniquely to a continuous automorphism of $\C_p$, see \cite[\S 3]{APZclosed}. Given $\alpha,\beta\in \C_p$, we say that $\alpha,\beta$ are conjugate (over $\Q_p$) if there exists $\sigma\in G_{\Q_p}=\Gal(\oQp/\Q_p)$ such that $\sigma(\alpha)=\beta$; the orbit of an element $\alpha\in\C_p$ is finite if and only if $\alpha\in\oQp$ (see \cite[Remark 3.2]{APZclosed}). 

We prove first the following lemma.

\begin{Lem}\label{extension Vpa to completion}
Let $p\in\PP$ and $W$ a valuation domain of $\Q_p(X)$ such that  $W\cap \Q(X)=\Z_{(p),\alpha}$ for some $\alpha\in \C_p$. Then $W=\Z_{p,\alpha}$.
\end{Lem}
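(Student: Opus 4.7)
The plan is to split into two cases according to whether $\alpha$ is transcendental or algebraic over $\Q_p$, after a preliminary observation. That observation is that $W\cap\Q_p=\Z_p$: after the harmless normalization $X\mapsto 1/X$ if needed to ensure $v_p(\alpha)\geq 0$, one has $1/p\notin\Z_{(p),\alpha}$, so $1/p\notin W$, which rules out the trivial valuation on $\Q_p$ and leaves only $\Z_p=W\cap\Q_p$. In particular $\Z_p[X]\subset W$.

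For $\alpha\in\C_p\setminus\oQp$, Proposition \ref{ratQ(X)} gives the containment $\Q_p(X)\subseteq\widehat{\Q(X)}$, where $\widehat{\cdot}$ denotes completion with respect to $v=v_{\Z_{(p),\alpha}}$. Since $v$ extends uniquely to a valuation $\widehat{v}$ on its completion, any valuation $W$ of $\Q_p(X)$ restricting to $v$ on $\Q(X)$ is forced to coincide with $\widehat{v}|_{\Q_p(X)}$: $\widehat{\Q(X)}$ embeds into the $W$-adic completion of $\Q_p(X)$, and uniqueness of extension pins the restriction of the latter's valuation back to $\widehat{v}$ on $\widehat{\Q(X)}$. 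By the last clause of Proposition \ref{ratQ(X)} combined with Proposition \ref{ef transcendental elements Cp}, $\widehat{v}|_{\Q_p(X)}=\Z_{p,\alpha}$, whence $W=\Z_{p,\alpha}$.

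For $\alpha\in\oQp$, the completion $\widehat{\Q(X)}=\Q_p(\alpha)$ is a finite extension of $\Q_p$ and the map $\Q_p(X)\to\Q_p(\alpha)$, $X\mapsto\alpha$, is surjective with kernel $(p_\alpha(X))$; thus $\Q_p(X)\not\subseteq\widehat{\Q(X)}$ and the above uniqueness trick fails. I would argue directly, approximating $\phi\in\Q_p[X]$ coefficient-wise by $\phi_n\in\Q[X]$. Ultrametricity applied to the monomial decomposition, together with $v_W(X)=v_p(\alpha)\in\Gamma_\alpha$, shows that $v_W(\phi_n-\phi)$ eventually exceeds every element of $\Gamma_\alpha$. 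When $\phi(\alpha)\neq 0$, the values $v_W(\phi_n)=v_p(\phi_n(\alpha))$ stabilize at $v_p(\phi(\alpha))$, so ultrametricity forces $v_W(\phi)=v_p(\phi(\alpha))$. Applied to $\phi=p_\alpha$, the same argument forces $v_W(p_\alpha)$ to strictly exceed every element of $\Gamma_\alpha$. Decomposing an arbitrary $\phi\in\Q_p(X)^\ast$ as $\phi=p_\alpha^k\psi$ with $\psi$ a unit in $V=\Q_p[X]_{(p_\alpha)}$, one obtains $v_W(\phi)=kv_W(p_\alpha)+v_p(\psi(\alpha))$, and the dominance of $v_W(p_\alpha)$ over $\Gamma_\alpha$ forces the sign of $v_W(\phi)$ to be governed by $k$ when $k\neq 0$ and by $v_p(\psi(\alpha))$ when $k=0$. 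Hence $W=\{\phi\in V:\phi(\alpha)\in\O_p\}=\Z_{p,\alpha}$, independently of the specific positive value taken by $v_W(p_\alpha)$.

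The main obstacle is the algebraic case: the absence of an embedding $\Q_p(X)\hookrightarrow\widehat{\Q(X)}$ forces a hands-on valuation-theoretic computation in place of the clean completion argument. The delicate step is showing via ultrametricity and approximation that $v_W(p_\alpha)$ dominates all of $\Gamma_\alpha$, which produces the rank-$2$ composite structure of $\Z_{p,\alpha}$ as a valuation ring independently of normalization.
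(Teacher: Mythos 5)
Your plan differs from the paper's proof, which is uniform in $\alpha$ and makes no case distinction: after normalizing so that $\alpha\in\O_p$, the paper compares $W$ and $\Z_{p,\alpha}$ on $\Q_p[X]$ by approximating the coefficients of $f\in\Q_p[X]$ by rationals so that the difference lies in $\Z_p[X]$, then passes to rational functions by a power/unit normalization together with integral closedness of $W$, and finally gets the equality from $\Z_{p,\alpha}\subseteq W$ by inspecting the overrings of $\Z_{p,\alpha}$. Your algebraic-over-$\Q_p$ computation (forcing $w(p_\alpha)$ above all of $\Gamma_\alpha$ and reading off the rank-two composite) and your transcendental-case completion argument are plausible in outline, but both hang on your preliminary observation, and that is where the genuine gap lies.

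The claim $W\cap\Q_p=\Z_p$ is not proved by your argument: ruling out the trivial valuation does not ``leave only $\Z_p$''. The extension $\Q_p/\Q$ is transcendental of huge degree, so $v_p$ on $\Q$ has many extensions to $\Q_p$ besides the $p$-adic one; for instance, a Gauss valuation on $\Q(t)$, for $t\in p\Z_p$ transcendental over $\Q$, extends (Chevalley) to a non-trivial valuation ring of $\Q_p$ lying over $\Z_{(p)}$ and different from $\Z_p$. Uniqueness of extensions is a statement about algebraic extensions of the Henselian field $\Q_p$, not about $\Q_p/\Q$. This is load-bearing in both of your cases: the algebraic case needs $w(c)\geq v_p(c)w(p)$ for $c\in\Q_p$ to run the coefficientwise ultrametric estimates, and the transcendental case needs it (together with cofinality of $\N\, w(p)$ in $\Gamma_w$, i.e.\ the absence of a proper coarsening of $W$ containing $\Q(X)$) in order to know that the given copy of $\Q_p(X)$ really lies in the closure of $\Q(X)$ inside the $W$-adic completion; as written, a priori ``bad'' extensions of $\Z_{(p),\alpha}$, such as composites through a valuation of $\Q_p(X)$ trivial on $\Q(X)$, are not excluded by your uniqueness argument. (Your instinct that compatibility of $w$ with $v_p$ on $\Q_p$ is the crux is sound --- the paper's own inequality $w(f-g)\geq\min_i\{v_p(\alpha_i-a_i)+iw(X)\}$ tacitly uses $\Z_p\subseteq W$ --- but the justification you offer for it is invalid, and without it neither of your two cases closes. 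A secondary looseness: invoking the last clause of Proposition \ref{ratQ(X)} for $\alpha\in\C_p\setminus\oQp$ presupposes $ef=\infty$ for $\Z_{(p),\alpha}$, which itself needs an argument, e.g.\ identifying the completion of $(\Q(X),\Z_{(p),\alpha})$ with the closure of $\Q(\alpha)$ in $\C_p$.)
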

\begin{proof}
Let $n\geq0$ be an integer such that $p^n\cdot \alpha=\alpha_0\in\O_p$. The field isomorphism $X\mapsto\frac{X}{p^n}$ maps $\Z_{(p),\alpha}$ to $\Z_{(p),\alpha_0}$ and $\Z_{p,\alpha}$ to $\Z_{p,\alpha_0}$, respectively. Hence, in order to prove the statement, without loss of generality, we may assume  that $\alpha\in \O_p$.

Let $w$ be a valuation associated to $W$. We note first that since $X\in  \Z_{(p),\alpha}$, it follows that $w(X)\geq0$. Let $f\in W\cap\Q_p[X]$, say $f(X)=\sum_{i=0}^d \alpha_i X^i$. Then for $g(X)=\sum_{i=0}^d a_i X^i\in\Q[X]$ we have
$$w(f-g)\geq\min_{0\leq i\leq d}\{v_p(\alpha_i-a_i)+iw(X)\}.$$ 
Therefore, if we choose $a_i\in\Q$ sufficiently $v_p$-adically close to $\alpha_i$ for each $i=0,\ldots,d$, we have $w(f-g)\geq0$. In particular, $g\in W\cap \Q(X)=\Z_{(p),\alpha}$. The polynomial $h=f-g$ is in $\Z_p[X]$; therefore $f(\alpha)=h(\alpha)+g(\alpha)\in\O_p$, so that $f\in \Z_{p,\alpha}$. Therefore $W\cap \Q_p[X]\subseteq \Z_{p,\alpha}\cap\Q_p[X]$. Similarly, one can easily show that the other containment holds, so $W\cap \Q_p[X]=\Z_{p,\alpha}\cap\Q_p[X]$.  In the same way, $M_W\cap \Q_p[X]=M_{p,\alpha}\cap\Q_p[X]$, where $M_W$ and $M_{p,\alpha}$ are the maximal ideals of $W$ and $\Z_{p,\alpha}$, respectively.

Let now $\psi\in\Z_{p,\alpha}$; since $\Z_p[X]\subset \Q_p[X]\cap \Z_{p,\alpha}$, we may suppose that $\psi=\frac{f}{g}$, where $f,g\in\Q_p[X]\cap \Z_{p,\alpha}$. Clearly, $g(\alpha)\not=0$; then, there exists $n\in\N$, $n\geq 1$ and $c\in\Q_p$, $c\not=0$, such that $v_p(c)+v_p(g(\alpha)^n)=0$. We consider then the rational function $\psi^n=\frac{cf^n}{cg^n}=\frac{f_1}{g_1}$, which still is in $\Z_{p,\alpha}$. Note that $f_1\in \Z_{p,\alpha}\cap\Q_p[X]=W\cap\Q_p[X]$ and $g_1\in \Z_{p,\alpha}^*\cap\Q_p[X]=W^*\cap \Q_p[X]$, since $v_{p,\alpha}(f_1)\geq v_{p,\alpha}(g_1)=0$ (the ${}^*$ denotes the set of units of the valuation domains). In particular, 
$$w(f_1)\geq0=w(g_1)$$
which proves that $\psi^n\in W$. Since $W$ is integrally closed, it follows that $\psi\in W$. 
Hence, $\Z_{p,\alpha}\subseteq W$. The equality follows because both rings are extensions of $\Z_p$ to $\Q_p(X)$ and in the case $\alpha$ is algebraic over $\Q_p$, the one-dimensional valuation overring of $\Z_{p,\alpha}$ is non-unitary (i.e., $\Q_p[X]_{(q)}$, where $q\in\Q_p[X]$ is the minimal polynomial of $\alpha$).
\end{proof}

\begin{Cor}\label{DVRQ(X)rat}
Let $W$ be a DVR of $\Q(X)$ which is a residually algebraic  extension of $\Z_{(p)}$ for some $p\in\PP$. Then there exists $\alpha\in \C_p^{\text{br}}$, transcendental over $\Q$, such that $W=\Z_{(p),\alpha}$. The element $\alpha$ belongs to $\oQp$  if and only if the residue field extension $\Z/p\Z\subseteq W/M$ is finite.

Moreover, for $\alpha,\beta\in \C_p$, we have $\Z_{(p),\alpha}=\Z_{(p),\beta}$ if and only if there exists $\sigma\in G_{\Q_p}$ such that $\sigma(\alpha)=\beta$.
\end{Cor}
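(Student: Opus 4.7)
The plan is to bootstrap from Theorem \ref{DVRoverZp} via the completion $\widehat{\Q(X)}$, and then to extract uniqueness from the Galois action on extensions of $\Z_{p,\alpha}$ to $\oQp(X)$. Since $W$ is a DVR its value group is $\Z$, which lies in the divisible hull of the value group of $\Z_{(p)}$, so $W$ is automatically a torsion extension; moreover its ramification index over $\Z_{(p)}$ equals $1$, so $ef=f$. Proposition \ref{ratQ(X)} (equivalence $3.\Leftrightarrow 4.$) now splits the existence argument cleanly. If $f<\infty$, that proposition itself delivers $\alpha\in\oQp$, transcendental over $\Q$, with $W=\Z_{(p),\alpha}$, and $\oQp\subset\C_p^{\text{br}}$. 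If $f=\infty$, the same proposition tells me $X$ is transcendental over $\Q_p$ and that $\Q_p(X)\subset\widehat{\Q(X)}$; the contracted valuation $\wW=\widehat W\cap\Q_p(X)$ is a residually algebraic torsion extension of $\Z_p$ to $\Q_p(X)$, and is still a DVR (the inclusion $W\subset\wW$ is immediate), so Theorem \ref{DVRoverZp} yields a transcendental $\alpha\in\C_p$ with $\wW=\Z_{p,\alpha}$. The stacked Cauchy sequence constructed in the proof of that theorem has ramification indexes that eventually stabilize, which via the Remark following it places $\alpha$ in $\C_p^{\text{br}}$; restricting down gives $W=\Z_{(p),\alpha}$. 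The characterization ``$\alpha\in\oQp\Leftrightarrow f<\infty$'' is the same equivalence $3.\Leftrightarrow 4.$ used a second time.

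For the uniqueness statement, the $(\Leftarrow)$ direction is immediate: each $\sigma\in G_{\Q_p}$ extends continuously to $\C_p$ fixing $\Q$ coefficient-wise and preserving $\O_p$, so $\phi(\sigma(\alpha))=\sigma(\phi(\alpha))$ lies in $\O_p$ exactly when $\phi(\alpha)$ does. For $(\Rightarrow)$, assume $\Z_{(p),\alpha}=\Z_{(p),\beta}$. Applying Lemma \ref{extension Vpa to completion} to the valuation $\Z_{p,\beta}$ of $\Q_p(X)$, which contracts to $\Z_{(p),\beta}=\Z_{(p),\alpha}$, I identify $\Z_{p,\beta}$ with $\Z_{p,\alpha}$. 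Now the natural evaluation valuations $\oZp_\alpha$ and $\oZp_\beta$ on $\oQp(X)$ are both extensions of this common valuation, and $\oQp(X)/\Q_p(X)$ is Galois with group $G_{\Q_p}$ (acting on coefficients and fixing $X$), so its Galois group acts transitively on these extensions. Pick $\sigma\in G_{\Q_p}$ with $\sigma(\oZp_\alpha)=\oZp_\beta$; a direct computation using $\sigma(\phi)(y)=\sigma(\phi(\sigma^{-1}(y)))$ identifies $\sigma(\oZp_\alpha)$ with $\oZp_{\sigma(\alpha)}$.

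The main obstacle is the last step, converting the equality of valuations $\oZp_{\sigma(\alpha)}=\oZp_\beta$ into the pointwise equality $\sigma(\alpha)=\beta$. In the algebraic case ($\alpha\in\oQp$, so $\sigma(\alpha),\beta\in\oQp$), the rank-one overring of $\oZp_\gamma$ is the localization of $\oQp[X]$ at the principal prime $(X-\gamma)$; equality of these overrings identifies the linear factors and hence $\sigma(\alpha)=\beta$. In the transcendental case ($\alpha\in\C_p\setminus\oQp$), I exploit the density of $\oQp$ in $\C_p$: pick $c_n\in\oQp$ with $c_n\to\beta$, evaluate the equal valuations on $X-c_n$ to obtain $v_p(\sigma(\alpha)-c_n)=v_p(\beta-c_n)\to\infty$, so $c_n\to\sigma(\alpha)$ as well, and uniqueness of limits in $\C_p$ forces $\sigma(\alpha)=\beta$.
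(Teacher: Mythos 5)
Your proposal follows essentially the same route as the paper: the case split via Proposition \ref{ratQ(X)} (3.$\Leftrightarrow$4.), the passage to $\wW=\widehat{W}\cap\Q_p(X)$ and Theorem \ref{DVRoverZp} when the residue field is infinite, and for uniqueness Lemma \ref{extension Vpa to completion} followed by conjugacy of the extensions $\oZp_\alpha,\oZp_\beta$ of $\Z_{p,\alpha}$ to $\oQp(X)$. Two small corrections. First, the ramification index $e(W|\Z_{(p)})$ need not be $1$: $\Gamma_w\cong\Z$ only forces $\Gamma_v=\Z$ to have \emph{finite} index in $\Gamma_w$ (e.g.\ $W=\Z_{(p),\alpha}$ with $\alpha\in\oQp$ transcendental over $\Q$ and $v_p(\alpha)=1/2$ has $e=2$); fortunately only finiteness of $e$ is used, so your reduction ``$ef<\infty\Leftrightarrow f<\infty$'' survives. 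Second, in the endgame of uniqueness your transcendental-case argument tacitly uses that equality of the rings $\oZp_{\sigma(\alpha)}=\oZp_\beta$ gives equality of the valuations $v_p\circ\mathrm{ev}_{\sigma(\alpha)}$ and $v_p\circ\mathrm{ev}_\beta$ (true, but worth a word: they agree on $\oQp$, whose value group is already all of $\Q$), and both your cases tacitly assume $\sigma(\alpha)$ and $\beta$ have the same type; this should be said explicitly (the rings have rank $1$ for transcendental points and rank $2$ for points of $\oQp$, which also justifies your unproved parenthetical ``$\alpha\in\oQp$, so $\beta\in\oQp$''). The paper avoids both issues with a single uniform test function: if $\sigma(\alpha)\neq\beta$, choose $c\in\oZp$ with $v_p(c)>v_p(\sigma(\alpha)-\beta)$ and $a\in\oQp$ with $v_p(a-\sigma(\alpha))\geq v_p(c)$; then $\frac{X-a}{c}$ lies in $\oZp_{\sigma(\alpha)}$ but not in $\oZp_\beta$, which uses only membership in the rings and the density of $\oQp$ in $\C_p$, exactly as your sequence argument does.
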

\begin{proof}
Let $f=[W/M:\Z/p\Z]$. If $f<\infty$, then the claim follows by \cite[Theorem 2.5]{PerTransc} and corresponds to the first case of Proposition \ref{ratQ(X)}: $W=\Z_{(p),\alpha}$, for some $\alpha\in\oQp$ which is transcendental over $\Q$. If $f=\infty$, then we are in the last case of Proposition \ref{ratQ(X)}, so $W=\Z_{(p),E}$, for some pseudo-convergent sequence in $\oQp$ of transcendental type. As in the proof of Proposition \ref{ratQ(X)}, we denote by $\widehat{W}$ the completion of $W$;  since the ramification index $e(W\mid\Z_{(p)})$ is finite, $\wW=\widehat{W}\cap\Q_p(X)$ is a residually algebraic torsion extension of $\Z_p$ to $\Q_p(X)$ which is a DVR, so by Theorem  \ref{DVRoverZp}, $\wW=\Z_{p,\alpha}$ for some $\alpha\in\C_p^{\text{br}}\setminus\oQp$. Hence, $W=\wW\cap\Q(X)=\Z_{(p),\alpha}$. Note that $\alpha$ is transcendental over $\Q_p$, hence also over $\Q$.

We prove now the final claim. Suppose there exists $\sigma\in G_{\Q_p}$ such that $\sigma(\alpha)=\beta$. If $\phi\in\Z_{(p),\alpha}$, then $\phi(\alpha)$ is defined and belongs to $\O_p$. In particular, $\sigma(\phi(\alpha))=\phi(\sigma(\alpha))=\phi(\beta)\in\oZp$. Hence, $\Z_{(p),\alpha}\subseteq\Z_{(p),\beta}$ and the other containment is proved in a symmetrical way.

Conversely, suppose that $\Z_{(p),\alpha}=\Z_{(p),\beta}$. By Lemma \ref{extension Vpa to completion}, it follows that $\Z_{p,\alpha}=\Z_{p,\beta}$. Note that the last two valuation domains are the contraction to $\Q_p(X)$ of the valuation domains $\oZp_{\alpha}=\{\phi\in\oQp(X)\mid \phi(\alpha)\in\O_p\},\oZp_{\beta}=\{\phi\in\oQp(X)\mid \phi(\alpha)\in\O_p\}$ of $\oQp(X)$, respectively. By \cite[Chapt. VI, §8, 6., Corollary 1]{Bourb}, there exists a $\Q_p(X)$-automorphism $\sigma$ of $\oQp(X)$ such that $\sigma(\oZp_{\alpha})=\oZp_{\beta}$. It is easy to check that $\sigma(\oZp_{\alpha})=\oZp_{\sigma(\alpha)}$. In particular, $\oZp_{\sigma(\alpha)}=\oZp_{\beta}$. If $\sigma(\alpha)-\beta\not=0$, let $c\in\oZp$ be such that $v_p(c)>v_p(\sigma(\alpha)-\beta)$. Let $a\in\oQp$ be such that $v_p(a-\sigma(\alpha))\geq v_p(c)$. Then the polynomial $\frac{X-a}{c}$ is in $\oZp_{\sigma(\alpha)}$ and not in $\oZp_{\beta}$, a contradiction.
\end{proof}

Note that, for a DVR $W$ as in the statement of Corollary \ref{DVRQ(X)rat}, there exists $\alpha\in \O_p\subset\C_p$ of bounded ramification such that $W=\Z_{(p),\alpha}$ if and only if $X\in W$. This last  condition occurs for example if $W$ is an overring of $\Z[X]$.

\vskip1cm
\section{Polynomial Dedekind domains}\label{PolDed}
In order to describe the family of Dedekind domains lying between $\Z[X]$ and $\Q[X]$,  we briefly recall the notion of integer-valued polynomials on algebras (see \cite{ChabPer, PerWerNontrivial}, for example). Let $D$ be an integral domain with quotient field $K$ and $A$ a torsion-free $D$ algebra. We embed $K$ and $A$ into the extended $K$-algebra $B=A\otimes_D K$, and this allows us to evaluate  polynomials over $K$ at elements of $A$. If $f\in K[X]$ and $a\in A$ are such that $f(a)\in A$, then we say that $f$ is integer-valued at $a$. In general, given a subset $S$ of $A$, we denote by
$$\Int_K(S,A)=\{f\in K[X] \mid f(s)\in A,\forall s\in S\}$$
the ring of integer-valued polynomials over $S$. We omit the subscript $K$ if $A=D$.

In our setting, let $\mathcal{O}=\prod_{p\in\PP}\O_p\subset\prod_{p\in\PP}\C_p$. Given $\alpha=(\alpha_p)\in  \prod_{p\in\PP}\C_p$ and $f\in\Q[X]$, then $f(\alpha)=(f(\alpha_p))$, which  is an element of $\prod_{p\in\PP}\C_p$. If $\uE=\prod_{p\in\PP}E_p$ is a subset of $\prod_p \C_p$, then
$$\IntQ(\uE,\mathcal{O})=\{f\in\Q[X]\mid f(\alpha)\in\mathcal{O},\forall \alpha\in\uE\}$$
that is, a polynomial $f$ is in $\IntQ(\uE,\mathcal{O})$ if $f(\alpha_p)\in \O_p$ for each $\alpha_p\in E_p$ and $p\in\PP$. By an argument similar to \cite[Remark 6.3]{ChabPer} there is no loss in generality to suppose that a subset of $\prod_{p\in\PP}\C_p$ is of the form $\prod_{p\in\PP}E_p$, when dealing with such rings of integer-valued polynomials.

We remark that we have the following representation for the ring $\IntQ(\uE,\mathcal{O})$ as an intersection of valuation overrings (see \cite[(2.2)]{PerDedekind}, for example):
\begin{equation}\label{IntQEZrepr}
\IntQ(\uE,\mathcal O)=\bigcap_{p\in\PP}\bigcap_{\alpha_p\in E_p}\Z_{(p),\alpha_p}\cap\bigcap_{q\in\mathcal{P}^{\text{irr}}}\Q[X]_{(q)}
\end{equation}
where  $\mathcal{P}^{\text{irr}}$ denotes the set of irreducible polynomials in $\Q[X]$. By \cite[Proposition 2.2]{PerTransc}, the valuation domain $\Z_{(p),\alpha_p}$ of $\Q(X)$ has rank one if and only if $\alpha_p$ is transcendental over $\Q$, and has rank $2$ otherwise (in the last case, note that necessarily $\alpha\in\oQp$).

A totally similar argument to \cite[Lemma 2.5]{PerDedekind} shows that, for $p\in\PP$, we have
$$(\Z\setminus p\Z)^{-1}(\IntQ(\uE,\mathcal O))=\IntQ(E_p,\O_p)=\{f\in\Q[X]\mid f(E_p)\subseteq \O_p\}$$

We also need to recall the following definition introduced in \cite{PerDedekind}.

\begin{Def}
We say that a subset $\uE$  of $\mathcal O$ is \emph{polynomially factorizable} if, for each $g\in\Z[X]$ and $\alpha=(\alpha_p)\in\uE$, there exist $n,d\in\Z$, $n,d\geq 1$ such that $\frac{g(\alpha)^n}{d}$ is a unit of $\mathcal O$, that is, $v_p(\frac{g(\alpha_p)^n}{d})=0$, $\forall p\in\PP$.
\end{Def}

The next theorem characterizes which rings of integer-valued polynomials $\IntQ(\uE,\mathcal{O})$ are Dedekind domains. Given   $p\in\PP$ and a subset $E_p$ of $\C_p$, we say that $E_p$ has finitely many $G_{\Q_p}=\Gal(\oQp/\Q_p)$-orbits if $E_p$ contains finitely many equivalence classes under the  relation of conjugacy over $\Q_p$ (we stress that $E_p$ may not necessarily contain a full $G_{\Q_p}$-orbit). By Corollary \ref{DVRQ(X)rat}, this condition holds  if and only if the set $\{\Z_{(p),\alpha_p}\mid \alpha_p\in E_p\}$ is finite.
Furthermore, if $E_p\subseteq\oQp$, then the number of $G_{\Q_p}$-orbits  is finite if and only if $E_p$ is a finite set.

\begin{Thm}\label{IntQEODed}
Let $\uE=\prod_{p\in\PP}E_p\subset\prod_p \C_p$. Then $\IntQ(\uE,\mathcal{O})$ is a Dedekind domain if and only if, for each prime $p$,  $E_p$ is a subset of $\C_p^{\text{br}}$ of transcendental elements over $\Q$ with finitely many $G_{\Q_p}$-orbits and $\uE$ is polynomially factorizable.

Moreover, if the above conditions hold, then the class group of $\IntQ(\uE,\mathcal{O})$ is isomorphic to the direct sum of the class groups $\IntQ(E_p,\O_p)$, $p\in\PP$, and if $E_p=\{\alpha_1,\ldots,\alpha_n\}$ where the $\alpha_i$'s are pairwise non-conjugate over $\Q_p$, then $\text{Cl}(\IntQ(E_p,\O_p))=\Z/e\Z\oplus\Z^{n-1}$, where $e$ is the greatest common divisors of the ramifications indexes of $\alpha_i$ over $\Q_p$.

In particular, assuming that $E_p$ is formed by pairwise non-conjugate elements over $\Q_p$ for each $p\in\PP$, $\IntQ(\uE,\mathcal{O})$ is a PID if and only if, $\uE$ is polynomially factorizable and for each $p\in\PP$, $E_p$ contains at most one element $\alpha_p\in \O_p\cap\C_p^{\text{br}}$, such that $\alpha_p$ is transcendental over $\Q$ and unramified over $\Q_p$.
\end{Thm}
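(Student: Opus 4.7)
The plan is to exploit the intersection representation \eqref{IntQEZrepr} together with the localization identity $(\Z\setminus p\Z)^{-1}\IntQ(\uE,\mathcal O)=\IntQ(E_p,\O_p)$ in order to reduce both the Dedekind characterization and the class-group computation to a prime-by-prime analysis. Since $R=\IntQ(\uE,\mathcal O)$ is an intersection of valuation overrings in $\Q(X)$, it is integrally closed for free, so the essential task is to pin down the conditions that force $R$ to be one-dimensional and Noetherian.

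For necessity, assuming $R$ is Dedekind, every valuation overring appearing in \eqref{IntQEZrepr} must have rank one; by \cite[Proposition~2.2]{PerTransc}, this forces each $\alpha_p\in E_p$ to be transcendental over $\Q$. Each $\Z_{(p),\alpha_p}$ is then a DVR of $\Q(X)$ that is a residually algebraic extension of $\Z_{(p)}$, so Corollary~\ref{DVRQ(X)rat} yields $\alpha_p\in\C_p^{\text{br}}$. The Noetherian assumption forces only finitely many maximal ideals of $R$ to lie above each $p\in\PP$, and the final part of Corollary~\ref{DVRQ(X)rat} identifies this with the requirement that $E_p$ consist of finitely many $G_{\Q_p}$-orbits. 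Polynomial factorizability will be derived by tracking the principal factorization of the fractional ideal $(g)$ for any $g\in\Z[X]\subset R$ and any $\alpha\in\uE$: the uniformity in $p$ forced by the existence of such a factorization in the Dedekind domain $R$ amounts precisely to the statement that some $g(\alpha)^n/d$ is a global unit of $\mathcal O$.

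For sufficiency, I would verify one-dimensionality and Noetherianity prime by prime. Under the stated hypotheses, Corollary~\ref{DVRQ(X)rat} guarantees that each $\Z_{(p),\alpha_p}$ is a DVR; the representation \eqref{IntQEZrepr} then displays $R$ as an intersection of DVRs of $\Q(X)$, yielding integrally closed and one-dimensional. The decisive point is Noetherianity, which one reduces to Noetherianity of each local ring $\IntQ(E_p,\O_p)$ via the localization identity; here polynomial factorizability enters to bound torsion in the value groups and to guarantee finite generation of ideals. From this local description one then obtains the decomposition $\Cl(R)=\bigoplus_{p\in\PP}\Cl(\IntQ(E_p,\O_p))$, using that inverting $p$ in $\IntQ(E_p,\O_p)$ recovers the PID $\Q[X]$, so that the $p$-component of the class group is supported on the primes above $p$.

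Fixing a prime $p$ and writing $P_1,\dots,P_n$ for the maximal ideals above $p$ with ramification indexes $e_1,\dots,e_n$, the plan is to compute $\Cl(\IntQ(E_p,\O_p))=\Z^n/\mathrm{Princ}$, where $\mathrm{Princ}$ is the image in $\Z^n$ of the principal divisors. The principal divisor of $p$ contributes the relation $(e_1,\dots,e_n)$, and the polynomial-factorizability condition is precisely what forces every other principal divisor to be a multiple of this one, by controlling how the valuations $v_p\circ\mathrm{ev}_{\alpha_i}$ on $\Q(X)$ compare across the $\alpha_i$. The resulting quotient is $\Z/e\Z\oplus\Z^{n-1}$ with $e=\gcd(e_i)$, and the PID criterion then drops out: the class group vanishes exactly when each local piece vanishes, i.e.\ when $|E_p|\le 1$ at every $p$ and the single element (if any) is unramified over $\Q_p$. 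The hard part will be establishing that polynomial factorizability is precisely the condition ensuring that $(e_1,\dots,e_n)$ generates $\mathrm{Princ}$, preventing additional relations from rational functions whose valuations at distinct $\alpha_i$ could a priori be incompatible.
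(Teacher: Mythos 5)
Your overall reduction — the representation \eqref{IntQEZrepr} together with the localization identity $(\Z\setminus p\Z)^{-1}\IntQ(\uE,\mathcal O)=\IntQ(E_p,\O_p)$ — is the same as the paper's, and your necessity direction is essentially the paper's argument. The sufficiency direction, however, has a genuine gap. Being an intersection of DVRs does \emph{not} give one-dimensionality ($\Z[X]$ is itself an intersection of DVRs and has dimension two), and Noetherianity does not pass from the localizations $\IntQ(E_p,\O_p)$ back to $R$ without a finite-character statement. The paper's route is: polynomial factorizability yields (via \cite[Lemma 2.12]{PerDedekind}) that each nonzero element lies in only finitely many maximal ideals of the valuation domains in \eqref{IntQEZrepr}, so $R$ is Krull; if $R$ were not Dedekind, \cite[Proposition 2.2]{Heitmann} would give a maximal ideal of height at least two, which is excluded by localizing at $\Z\setminus\{0\}$ (giving $\Q[X]$) or at $\Z\setminus p\Z$ (giving $\IntQ(E_p,\O_p)$, which is Dedekind by \cite[Theorem]{EakHei}). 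The essential local input — that $\IntQ(E_p,\O_p)$, the intersection of finitely many DVRs $\Z_{(p),\alpha_i}$ with $\Q[X]$, is a Dedekind domain — never appears in your sketch; your assertion that polynomial factorizability "bounds torsion in the value groups and guarantees finite generation of ideals" misdescribes its role, which is purely the cross-prime finite-character condition.

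The class-group part repeats this misattribution. $\Cl(\IntQ(E_p,\O_p))$ has nothing to do with polynomial factorizability, which is a condition involving all primes simultaneously. The correct mechanism is the localization (Nagata) sequence for the Krull domain $\IntQ(E_p,\O_p)$ and its localization $\IntQ(E_p,\O_p)[1/p]=\Q[X]$: since $\Cl(\Q[X])=0$ and the units of $\Q[X]$ are $\Q^{*}$, the class group is free on the $n$ primes above $p$ modulo the divisors of nonzero rational constants, i.e.\ modulo the single relation $(e_1,\dots,e_n)$, giving $\Z/e\Z\oplus\Z^{n-1}$; this is exactly the content of \cite[Theorem]{EakHei} or \cite[Proposition 2.10]{PerDedekind}, which the paper simply cites (and $\Cl(R)=\bigoplus_p\Cl(R_p)$ is \cite[Lemma 2.14]{PerDedekind}). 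Your announced "hard part" — that polynomial factorizability forces every principal divisor to be a multiple of $(e_1,\dots,e_n)$ — is both not what is needed and not true as stated: divisors of non-constant polynomials have components at the primes contracted from $\Q[X]$ and are disposed of by the localization sequence, not by factorizability. So, as written, the proposal closes neither the sufficiency direction nor the class-group computation.
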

\begin{proof}
Let $R=\IntQ(\uE,\mathcal{O})$.

Suppose that the above conditions on $\uE$ are satisfied. By \eqref{IntQEZrepr}, $R$ is equal to an intersection of DVRs. Moreover, $R$ has finite character, that is, for every non-zero $f\in R$, $f$ belongs to finitely many maximal ideals of the family of DVRs appearing in  \eqref{IntQEZrepr}: in fact, if $f(X)=\frac{g(X)}{n}$, for some $g\in\Z[X]$ and $n\in\Z$, $n\not=0$, then $f$ is divisible only by finitely many $q\in\mathcal{P}^{\text{irr}}$; since $\uE$ is polynomially factorizable, by \cite[Lemma 2.12]{PerDedekind}, the set $\{p\in\PP\mid \exists\alpha_p\in E_p, v_p(g(\alpha_p))>0\}$ is finite, so that $f$ belongs to finitely many maximal ideals of the family $\Z_{(p),\alpha_p}$, $\alpha_p\in E_p,p\in\PP$. Hence, $R$ is a Krull domain.

Suppose that $R$ is not a Dedekind domain. By \cite[Proposition 2.2]{Heitmann}, there exists a maximal ideal $M\subset R$ of height strictly bigger than one. If $M\cap\Z=(0)$, then, since $\Z[X] \subseteq R \subseteq \Q[X]$, it follows that $R_{\Z\setminus \{0\}} = \Q[X]$ and $2 \leq htM = ht(M_{\Z \setminus \{0\}}) \leq dim(\Q[X])=1$, a contradiction.
Hence, $M\cap \Z=p\Z$, for some $p\in\PP$. If we now localize at $p$, we have that $(\Z\setminus p\Z)^{-1}R=R_p=\IntQ(E_p,\O_p)$ which is a Dedekind domain by \cite[Theorem]{EakHei}. So $(\Z\setminus p\Z)^{-1}M\subset R_p$ cannot have dimension strictly bigger than one, a contradiction.

Conversely, suppose that $R$ is a Dedekind domain.  In  particular, for each $p\in\PP$, $(\Z\setminus p\Z)^{-1}R=R_p=\IntQ(E_p,\O_p)$ is a Dedekind domain, so $\{\Z_{(p),\alpha_p}\mid \alpha_p\in E_p\}$ is a finite set of DVRs (because $p$ is contained in only finitely many maximal ideals of these valuation overrings) which implies that $E_p$ is a subset of $\C_p^{\text{br}}$ of transcendental elements over $\Q$ and $E_p$ has finitely many $G_{\Q_p}$-orbits. Since every polynomial of $R$ is contained in only finitely many maximal ideals, it follows easily that $\uE$ is polynomially factorizable.

Finally, suppose that $R$ is a Dedekind domain. As in \cite[Lemma 2.14]{PerDedekind}, we have $\Cl(R)=\bigoplus_{p\in\PP}\Cl(R_p)$, where $R_p=\IntQ(E_p,\O_p)$ for $p\in\PP$. The  claim about the class group of $R_p$ follows by \cite[Proposition 2.10]{PerDedekind} or by \cite[Theorem]{EakHei}, since, for each $p\in\PP$, we are assuming that $E_p=\{\alpha_1,\ldots,\alpha_n\}$ is formed by pairwise non-conjugate elements over $\Q_p$. 

The claim about when $\IntQ(\uE,\O)$ is a PID is now straightforward.
\end{proof}
Let $\ohZ=\prod_{p\in\PP}\oZp$. In \cite[Theorem 2.17]{PerDedekind} we show that if $R$ is a Dedekind domain between $\Z[X]$ and $\Q[X]$ such that the residue fields of prime characteristic are finite fields, then $R=\IntQ(\uE,\ohZ)$, for some $\uE=\prod_p E_p\subset\ohZ$ such that $\uE$ is polynomially factorizable and for each $p\in\PP$, $E_p$ is a finite subset of $\oZp$  of transcendental elements over $\Q$.
Now, we are able to complete the classification of the Dedekind domains $R$, $\Z[X]\subset R\subseteq\Q[X]$, without any restriction on the residue fields.
\begin{Thm}\label{PolDedekind}
Let $R$ be a Dedekind domain such that $\Z[X]\subset R\subseteq \Q[X]$. Then $R$ is equal to $\Int_{\Q}(\uE,\mathcal{O})$, for some polynomially factorizable subset $\uE=\prod_{p\in\PP}E_p\subset\mathcal{O}$, such that, for each prime $p$,  $E_p\subset \O_p\cap\C_p^{\text{br}}$ is a finite set of transcendental elements over $\Q$.
\end{Thm}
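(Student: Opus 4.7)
The plan is to leverage the Krull representation $R=\bigcap_M R_M$ (over the maximal ideals $M$ of $R$) together with the classification of DVR extensions of $\Z_{(p)}$ to $\Q(X)$ developed in Section \ref{stacked sect}. Each $R_M$ is a DVR of $\Q(X)$ containing $\Z[X]$. The contraction $R_M\cap\Q$ is a valuation domain of $\Q$ containing $\Z$, hence equals either $\Q$ (exactly when $M\cap\Z=(0)$) or $\Z_{(p)}$ for some prime $p$ (when $M\cap\Z=p\Z$). In the first case $R_M\supseteq\Q[X]$, so $R_M=\Q[X]_{(q)}$ for some irreducible $q\in\Q[X]$. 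In the second, $R_M$ is a DVR extension of $\Z_{(p)}$ to $\Q(X)$, automatically torsion since its value group $\Z$ lies in $\Q$.

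The crucial step is to prove that in the second case $R_M$ is residually algebraic over $\Z_{(p)}$, so that Corollary \ref{DVRQ(X)rat} applies. By Abhyankar's inequality the residue field $R_M/M_{R_M}$ has transcendence degree at most one over $\F_p$, and I would rule out the residually transcendental alternative by exploiting $R\subseteq\Q[X]$. If $R_M$ were residually transcendental, then $R_M$ would be defined by a minimal pair $(\alpha,\delta)$ with value group $\Z$, and its residue field would have the form $k_\alpha(\bar t)$, where $\bar t$ is the residue of a suitable $(X-\alpha)^{e}/c$. For any $f\in\Q[X]\cap R_M$, the Taylor expansion $f=\sum a_i(X-\alpha)^i$, after keeping only the terms of valuation zero, yields a residue lying in the polynomial subring $k_\alpha[\bar t]$. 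Hence the image of $R$ in $R_M/M_{R_M}$ would be contained in $k_\alpha[\bar t]$; but that image coincides with $R/M=R_M/M_{R_M}=k_\alpha(\bar t)$, which is not a subring of $k_\alpha[\bar t]$. Contradiction.

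With residual algebraicity established, Corollary \ref{DVRQ(X)rat} produces $\alpha_M\in\C_p^{\text{br}}$, transcendental over $\Q$, with $R_M=\Z_{(p),\alpha_M}$; since $X\in R_M$ we may take $\alpha_M\in\O_p$. Setting $E_p=\{\alpha_M : M\cap\Z=p\Z\}$, this set is finite because $R/pR$ is Artinian. Using $\bigcap_{q\in\mathcal{P}^{\text{irr}}}\Q[X]_{(q)}=\Q[X]$, the representation \eqref{IntQEZrepr} together with $R=\bigcap_M R_M$ yields
\[ R=\Bigl(\bigcap_{p\in\PP}\bigcap_{\alpha\in E_p}\Z_{(p),\alpha}\Bigr)\cap\Q[X]=\IntQ(\uE,\mathcal{O}). \]
Polynomial factorizability of $\uE$ follows from the finite character of $R$: for $g\in\Z[X]$ and a chosen $(\alpha_p)\in\uE$, the set $\{p:v_p(g(\alpha_p))>0\}$ is finite (each such prime corresponds to a maximal ideal of $R$ containing $g$); since each $\alpha_p\in\C_p^{\text{br}}$ forces $v_p(g(\alpha_p))\in\tfrac{1}{e_{\alpha_p}}\Z$, an integer $n$ clearing the finitely many denominators together with $d=\prod_p p^{nv_p(g(\alpha_p))}$ makes $g(\alpha)^n/d$ a unit of $\mathcal{O}$.

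The main obstacle is the residually-algebraic step: verifying via Taylor expansion around $\alpha$ that polynomials in $\Q[X]$ can produce only polynomial residues in the would-be transcendental generator $\bar t$, and therefore that the rational-function residue field $k_\alpha(\bar t)$ cannot arise as $R/M$ when $R\subseteq\Q[X]$.
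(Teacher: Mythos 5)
Your proposal is correct and follows the paper's overall strategy (localize at the maximal ideals, show each $R_M$ over a prime $p$ is a residually algebraic DVR extension of $\Z_{(p)}$, apply Corollary \ref{DVRQ(X)rat} to get $R_M=\Z_{(p),\alpha_M}$ with $\alpha_M\in\O_p\cap\C_p^{\text{br}}$ transcendental over $\Q$, then recover $R$ from \eqref{IntQEZrepr}), but it diverges at the one step that carries real content: excluding the residually transcendental alternative. The paper disposes of this in one line by citing \cite[Theorem 3.14]{PerPrufer}, which says that for a residually transcendental $W$ the ring $W\cap\Q[X]$ is not Pr\"ufer, hence cannot be an overring of the Dedekind domain $R$. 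You instead argue directly: invoking the characterization of residually transcendental extensions by minimal pairs \cite{APZTheorem}, you expand any $f\in\Q[X]\cap R_M$ around $\alpha$ and observe that only terms whose exponent is divisible by the appropriate $e$ can have value zero, so the residue of $f$ lands in the polynomial ring $k_\alpha[\bar t]$; since the quotient map $R\to R_M/M_{R_M}$ is onto $R/M=k_w$, the full residue field (a rational function field over a finite field) would be contained in a polynomial ring, which is absurd because a nonconstant $u\in k_\alpha[\bar t]$ has $1/u\notin k_\alpha[\bar t]$. This is sound and makes the exclusion self-contained modulo the APZ theorem, which is arguably more illuminating than the black-box citation, though you should state the residue-field description more carefully: over $\Q(X)$ the transcendental generator is the residue of $q(X)^{e}/c$ with $q$ the minimal polynomial of $\alpha$ over $\Q$ and $c$ in $\Q(\alpha)$ (your $(X-\alpha)^{e}/c$ description is the one valid after extending the valuation to $\Q(\alpha)(X)$; the argument survives because the residue field of $R_M$ embeds into $k_\alpha(\bar s)$ there and the polynomial-image computation is the same, using the minimal-pair property to identify residues of low-degree coefficients with residues of their values at $\alpha$). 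The remaining differences are cosmetic: you verify polynomial factorizability of $\uE$ directly from the finite character of $R$ and the boundedness of ramification ($v_p(g(\alpha_p))\in\tfrac{1}{e_{\alpha_p}}\Z$), whereas the paper simply quotes the converse direction of Theorem \ref{IntQEODed}; both are fine, and your handling of the primes with $M\cap\Z=(0)$, of the finiteness of $E_p$, and of the normalization $\alpha_M\in\O_p$ (since $X\in R_M$) matches the paper's.
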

\begin{proof}
Note first that, by \cite[Theorem 3.14]{PerPrufer}, no valuation overring of $W$ of $R$ can be a residually transcendental extension of $W\cap\Q$, since for such a valuation domain $W$, the domain $W\cap\Q[X]$ is not Pr\"ufer. Hence, for each prime ideal $P\subset R$ such that $P\cap\Z=p\Z$, $p\in\PP$, $R_P$ is a DVR of $\Q(X)$ which is a residually algebraic extension of $\Z_{(p)}$. By Corollary \ref{DVRQ(X)rat}, there exists $\alpha\in \O_p\cap\C_p^{\text{br}}$ such that $R_p=\Z_{(p),\alpha}$. Let $E_p$ be the subset of $\C_p^{\text{br}}$ formed by all such $\alpha_p$'s. Note that since $p$ is contained in only finitely many maximal ideals $P$ of $R$, it follows that $E_p$ is a finite set; moreover, each element of $E_p$ is transcendental over $\Q$, since $R_P$ is a DVR.  It now follows that 
$$R=\bigcap_{p\in\PP}\bigcap_{\substack{P\subset R\\P\cap\Z=p\Z}}R_P\cap\Q[X]=\bigcap_{p\in\PP}\IntQ(E_p,\O_p)=\IntQ(\uE,\mathcal{O}).$$
The rest of the statement follows by Theorem \ref{IntQEODed}.
\end{proof}
Finally, the next corollary describes the PIDs among the family of Dedekind domains between $\Z[X]$ and $\Q[X]$.
\begin{Cor}
Let $R$ be a PID such that $\Z[X]\subset R\subseteq \Q[X]$. Then $R$ is equal to $\Int_{\Q}(\uE,\mathcal{O})$, for some $\uE=\prod_{p\in\PP}E_p\subset\mathcal{O}$, such that, for each prime $p$,  $E_p$ contains at most one element $\alpha_p\in \O_p\cap\C_p^{\text{br}}$, such that $\alpha_p$ is transcendental over $\Q$ and unramified over $\Q_p$ and $\uE=\{\alpha=(\alpha_p)\}$ is polynomially factorizable.
\end{Cor}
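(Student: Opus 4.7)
Since every PID is in particular a Dedekind domain, Theorem \ref{PolDedekind} immediately produces a representation $R=\IntQ(\uE,\mathcal O)$ in which $\uE=\prod_p E_p$ is polynomially factorizable and each $E_p\subset\O_p\cap\C_p^{\text{br}}$ is a finite set of elements transcendental over $\Q$. The entire content of the corollary therefore reduces to extracting the stronger conditions on each $E_p$ from the triviality of the class group.

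The plan is to first normalize each $E_p$ so that its elements are pairwise non-conjugate over $\Q_p$. By the last assertion of Corollary \ref{DVRQ(X)rat}, two elements of $\C_p$ yield the same DVR $\Z_{(p),\alpha}$ if and only if they lie in the same $G_{\Q_p}$-orbit, so by the representation \eqref{IntQEZrepr} one may discard any superfluous conjugate representative from each $E_p$ without altering $R$. Having done this, I would invoke the class-group formula of Theorem \ref{IntQEODed}: $\Cl(R)\cong\bigoplus_{p\in\PP}\Cl(R_p)$, where $R_p=\IntQ(E_p,\O_p)$, and for $E_p=\{\alpha_{1,p},\ldots,\alpha_{n_p,p}\}$ formed by pairwise non-conjugate elements over $\Q_p$,
$$\Cl(R_p)\cong\Z/e_p\Z\oplus\Z^{n_p-1},$$
where $e_p$ is the greatest common divisor of the ramification indices $e_{\alpha_{i,p}}$ over $\Q_p$.

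Since $R$ is a PID, each summand $\Cl(R_p)$ must vanish. The free part forces $n_p-1\leq 0$, hence $n_p\in\{0,1\}$, so up to $G_{\Q_p}$-conjugacy $E_p$ contains at most one element. When $E_p=\{\alpha_p\}$, vanishing of the torsion part then forces $e_{\alpha_p}=1$, i.e.\ $\alpha_p$ is unramified over $\Q_p$. Combining this with the conditions already produced by Theorem \ref{PolDedekind} yields exactly the statement of the corollary. I do not anticipate a significant obstacle: the only non-routine point is the normalization step, and this is settled by Corollary \ref{DVRQ(X)rat}; everything else is a direct reading of the structure theorems for $\Cl(R)$ and $\Cl(R_p)$.
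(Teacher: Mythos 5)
Your proposal is correct and follows essentially the same route as the paper: apply Theorem \ref{PolDedekind} to get $R=\IntQ(\uE,\mathcal O)$ and then use the class-group/PID part of Theorem \ref{IntQEODed} to force $E_p$ to have at most one element, unramified over $\Q_p$. Your explicit normalization of $E_p$ to pairwise non-conjugate representatives via Corollary \ref{DVRQ(X)rat} and \eqref{IntQEZrepr} is a sound way of handling a step the paper leaves implicit in invoking that theorem.
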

\begin{proof}
By Theorem \ref{PolDedekind}, the ring $R$ is equal to $\Int_{\Q}(\uE,\mathcal{O})$, for some polynomially factorizable subset $\uE=\prod_{p\in\PP}E_p\subset\mathcal O$, such that, for each prime $p$,  $E_p\subset \O_p\cap\C_p^{\text{br}}$ is a set of transcendental elements over $\Q$ with finitely many $G_{\Q_p}$-orbits. Since by hypothesis  the class group of $R$ is trivial, it follows by Theorem \ref{IntQEODed} that for each $p\in\PP$, $E_p$ contains at most one element, which is transcendental over $\Q$ and unramified over $\Q_p$.
\end{proof}

\begin{Rem}
As we mentioned in the Introduction,  given a group $G$ which is the direct sum $G$ of a countable family of finitely generated abelian groups, there exists a Dedekind domain $R$ between $\Z[X]$ and $\Q[X]$ with class group $G$ (\cite[Theorem 3.1]{PerDedekind}). The domain $R$ of that  construction is equal to
$\IntQ(\uE,\mathcal{O})$ for some polynomially factorizable subset $\uE=\prod_{p\in\PP}E_p$, where $E_p$ is a finite subset of $\oQp$ of transcendental elements over $\Q$. In particular, $R$ has finite residue fields of prime characteristic (\cite[Theorem 2.17]{PerDedekind}); the reason is that the valuation overrings $\Z_{(p),\alpha_p}$ of $R$ in \eqref{IntQEZrepr} have finite residue fields precisely because $\alpha_p$ is chosen in $\oQp$ for each $p\in\PP$ (Proposition \ref{ratQ(X)}).

Now, by means of Theorem \ref{prescribed residue field and value group rat Q(X)}, with the same method used in \cite[Theorem 3.1]{PerDedekind}, we can build a Dedekind domain $R$, $\Z[X]\subset R\subseteq\Q[X]$, with prescribed class group $G$ as above and prescribed residue fields of prime characteristic, which can be finite or infinite algebraic extensions of the prime field $\F_p$, according to whether the the above elements $\alpha_p\in E_p\subset\C_p^{\text{br}}$ transcendental over $\Q$, are either algebraic or transcendental over $\Q_p$.
\end{Rem}

\subsection*{Acknowledgments}

The author wishes to thank the anonymous referee for his/her remarks.


\begin{thebibliography}{99}
\bibliographystyle{plain}

\bibitem{AlePop} V. Alexandru, N. Popescu, \emph{Sur une classe de prolongements \`a K(X) d'une valuation sur un corps $K$}, Rev. Roumaine Math. Pures Appl. 33 (1988), no. 5, 393-400. 

\bibitem{APZTheorem} V. Alexandru, N. Popescu, A. Zaharescu, \emph{A theorem of characterization of residual transcendental extensions of a valuation}. J. Math. Kyoto Univ. 28 (1988), no. 4, 579-592. 


\bibitem{APZAll} V. Alexandru, N. Popescu, A. Zaharescu, \emph{All valuations on $K(X)$}. J. Math. Kyoto Univ., 30 (1990), no. 2, 281–296.

\bibitem{APZMinimal} V. Alexandru, N. Popescu, A. Zaharescu, \emph{Minimal pairs of definition of a residual transcendental extension of a valuation}. J. Math. Kyoto Univ. 30 (1990), no. 2, 207-225.


\bibitem{APZclosed} V. Alexandru, N. Popescu, A. Zaharescu, \emph{On the closed subfields of $\mathbb{C}_p$}. J. Number Theory 68 (1998), no. 2, 131-150. 


\bibitem{Bourb} N. Bourbaki, {\em Alg\`ebre commutative}, Hermann, Paris, 1961.

\bibitem{ChabHal} J.-L. Chabert and E. Halberstadt, \emph{On Abhyankar's lemma about ramification indices}, preprint, 2018, \href{https://arxiv.org/abs/1805.08869}{https://arxiv.org/abs/1805.08869}.



\bibitem{ChabPer} J.-L. Chabert, G. Peruginelli, \emph{Polynomial overrings of $\Int(\Z)$}, J. Commut. Algebra 8 (2016), no. 1, 1-28.


\bibitem{EakHei} P. Eakin, W. Heinzer, \emph{More noneuclidian PID's and Dedekind domains with prescribed class group}. Proc. Amer. Math. Soc. 40 (1973), 66-68.  

\bibitem{EngPre} A. J. Engler, A. Prestel, \emph{Valued fields}. Springer-Verlag, Berlin, 2005.


\bibitem{Heitmann} R. C. Heitmann, \emph{PID's with specified residue fields}. Duke Math. J. 41 (1974), 565-582. 


\bibitem{IovZah} A. Iovita and A. Zaharescu, \emph{Completions of r.a.t.-valued fields of rational functions}, J. Number Theory 50(2) (1995), 202-205.

\bibitem{Kap} I. Kaplansky, \emph{Maximal fields with valuations}, Duke Math. J.,  9 ( 1942), 303-321.

\bibitem{KhaSa} S. K. Khanduja, J. Saha, \emph{A generalized fundamental principle}. Mathematika 46 (1999), no. 1, 83-92. 

\bibitem{Lampert} D. Lampert, \emph{Algebraic $p$-adic expansions}.
J. Number Theory 23 (1986), no. 3, 279-284.

\bibitem{MatOhm}  M. Matignon, J. Ohm, \emph{A structure theorem for simple transcendental extensions of valued fields}. Proc. Amer. Math. Soc. 104 (1988), no. 2, 392-402. 

\bibitem{Nark} W. Narkiewicz, \emph{Elementary and analytic theory of algebraic numbers}. Third edition. Springer Monographs in Mathematics. Springer-Verlag, Berlin, 2004.

\bibitem{Ostr} A. Ostrowski, \emph{Untersuchungen zur arithmetischen Theorie der K\"orper},
Math. Z. 39 (1935), 269-404.

\bibitem{PerTransc} G. Peruginelli, \emph{Transcendental extensions of a valuation domain of rank one}, Proc. Amer. Math. Soc. 145 (2017), no. 10, 4211-4226.

\bibitem{PerPrufer} G. Peruginelli, \emph{Pr\"ufer intersection of valuation domains of a field of rational functions}, J. Algebra 509 (2018), 240-262.

\bibitem{PerDedekind} G. Peruginelli, \emph{Polynomial Dedekind domain with finite residue fields of prime characteristic}, Pacific J. Math. 324 (2023), no. 2, 333–351.


\bibitem{PS1} G. Peruginelli, D. Spirito, \emph{The Zariski-Riemann space of valuation domains associated to pseudo-convergent sequences}, Trans. Amer. Math. Soc. 373 (2020), no. 11, 7959–7990.

\bibitem{PS2} G. Peruginelli, D. Spirito, \emph{Extending valuations to the field of rational functions using pseudo-monotone sequences}, J. Algebra 586 (2021), 756-786.

\bibitem{PerWerNontrivial} G. Peruginelli, N. J. Werner. \emph{Non-triviality Conditions for Integer-valued Polynomials on Algebras},  Monatsh. Math. 183 (2017), no. 1, 177-189. 

\bibitem{PZstructure} N. Popescu, A. Zaharescu, \emph{On the structure of the irreducible polynomials over local fields}. J. Number Theory 52 (1995), no. 1, 98-118. 

\bibitem{RibVal} P. Ribenboim, \emph{Th\'eorie des valuations}. S\'eminaire de Math\'ematiques Sup\'erieures, No. 9 (\'Et\'e, 1964) Les Presses de l'Universit\'e de Montr\'eal, Montreal, Que. 1968.

\bibitem{ZS2} O. Zariski, P. Samuel, \emph{Commutative Algebra, vol. II}, Springer-Verlag, New York-Heidelberg-Berlin, 1975.

\end{thebibliography}
\end{document}